\UseRawInputEncoding
\documentclass[11pt, reqno]{amsart}
\usepackage{amsfonts,latexsym,enumerate}
\usepackage{amsmath}
\usepackage{amscd}
\usepackage{float,amsmath,amssymb,mathrsfs,bm,multirow,graphics}
\usepackage[dvips]{graphicx}
\usepackage[percent]{overpic}
\usepackage{amsaddr}
\usepackage{todonotes}

\newcommand{\nequation}{\setcounter{equation}{0}}

\newcommand{\R}{{\Bbb R}}

\newcommand{\C}{{\Bbb C}}

\newcommand{\proofbegin}{\noindent{\it Proof.\quad}}
\newcommand{\proofend}{\hfill$\Box$\bigskip}

\newcommand{\diag}{\text{\upshape diag\,}}
\newcommand{\re}{\text{\upshape Re\,}}

\usepackage{tikz}
\usepackage{pict2e}
\usetikzlibrary{arrows,calc,chains, positioning, shapes.geometric,shapes.symbols,decorations.markings,arrows.meta}
\usetikzlibrary{patterns,angles,quotes}
\usetikzlibrary{decorations.markings}
\tikzset{middlearrow/.style={
			decoration={markings,
				mark= at position 0.6 with {\arrow{#1}} ,
			},
			postaction={decorate}
		}
	}
\tikzset{->-/.style={decoration={
				markings,
				mark=at position #1 with {\arrow{latex}}},postaction={decorate}}}
	
\tikzset{-<-/.style={decoration={
				markings,
				mark=at position #1 with {\arrowreversed{latex}}},postaction={decorate}}}
				
				\tikzset{
	master/.style={
		execute at end picture={
			\coordinate (lower right) at (current bounding box.south east);
			\coordinate (upper left) at (current bounding box.north west);
		}
	},
	slave/.style={
		execute at end picture={
			\pgfresetboundingbox
			\path (upper left) rectangle (lower right);
		}
	}
}
\tikzset{cross/.style={cross out, draw, 
         minimum size=2*(#1-\pgflinewidth), 
         inner sep=0pt, outer sep=0pt}}

\def\XXint#1#2#3{{\setbox0=\hbox{$#1{#2#3}{\int}$}
\vcenter{\hbox{$#2#3$}}\kern-.5\wd0}}

\allowdisplaybreaks

\newtheorem{theorem}{Theorem}[section]
\newtheorem{proposition}[theorem]{Proposition}
\newtheorem{corollary}[theorem]{Corollary}
\newtheorem{lemma}[theorem]{Lemma}
\newtheorem{definition}[theorem]{Definition}
\newtheorem{assumption}[theorem]{Assumption}
\newtheorem{remark}[theorem]{Remark}

\newtheorem{RHproblem}[theorem]{RH problem}
\newtheorem{figuretext}{Figure}

\usepackage[colorlinks=true]{hyperref}
\hypersetup{urlcolor=blue, citecolor=red, linkcolor=blue}

\input epsf
\title[The ``good" Boussinesq equation on the half-line]
{The ``good" Boussinesq equation on the half-line: a Riemann-Hilbert approach}

\author{C\MakeLowercase{hristophe} C\MakeLowercase{harlier$^{1}$ and} J\MakeLowercase{onatan} L\MakeLowercase{enells$^{2}$}}
\address{\hspace{0.35cm}$^{1}$Research Institute in Mathematics and Physics, UCLouvain, Belgium. 
\newline
$^{2}$Department of Mathematics, KTH Royal Institute of Technology, Sweden.}
\email{christophe.charlier@uclouvain.be, jlenells@kth.se}

\begin{document}

\begin{abstract} 
We consider the ``good" Boussinesq equation on the half-line. Assuming existence of the solution, we prove that it can be recovered from the solution of a $3\times 3$ Riemann-Hilbert problem that depends only on the initial and boundary values, and whose jump contour consists of twelve half-lines.
\end{abstract}

\maketitle

\noindent
{\small{\sc AMS Subject Classification (2020)}: 35G31, 35Q15, 37K15.}

\noindent
{\small{\sc Keywords}: Unified transform method, Riemann-Hilbert problem, inverse scattering transform, initial-boundary value problem.}


\section{Introduction}\nequation

In 1872, Boussinesq \cite{B1872} derived the equation 
\begin{align}\label{badboussinesq}
u_{tt} = u_{xx} + (u^2)_{xx} + u_{xxxx}
\end{align}
as a model for long waves in shallow water. The function $u(x,t)$ is real-valued and subscripts denote partial derivatives. Equation (\ref{badboussinesq}) is linearly ill-posed and is therefore commonly referred to as the ``bad'' Boussinesq equation.

By changing the sign of the $u_{xxxx}$ term in \eqref{badboussinesq}, and then replacing $u$ by $-u$, we obtain the ``good'' Boussinesq equation
\begin{align}\label{goodboussinesq}
u_{tt} = u_{xx} - (u^2)_{xx} - u_{xxxx},
\end{align}
which is linearly well-posed. Equation (\ref{goodboussinesq}) models nonlinear vibrations along a string \cite{Z1974} and is therefore also known as the ``nonlinear string equation'' \cite{FST1983}. Well-posedness results  can be found in e.g. \cite{BS1988, CT2017, F2009, L1993} for (\ref{goodboussinesq}) on the line, and in \cite{HM2015} for (\ref{goodboussinesq}) on the half-line.

By replacing $u$ by $u - \frac{1}{2}$ and $u + \frac{1}{2}$ in (\ref{badboussinesq}) and (\ref{goodboussinesq}), respectively, we obtain the equations
\begin{align}\label{boussinesqnouxx sigma}
  u_{tt}  + \sigma((u^2)_{xx} + u_{xxxx}) = 0, \qquad \sigma = \pm 1,
\end{align}
where $\sigma =1$ and $\sigma = -1$ correspond to the good and bad versions, respectively. Results on inverse scattering transforms have been obtained in the case of Schwartz class initial data for three out of the four aforementioned equations, namely equations \eqref{badboussinesq} \cite{CLmain}, \eqref{boussinesqnouxx sigma} with $\sigma = -1$ \cite{DTT1982}, and \eqref{boussinesqnouxx sigma} with $\sigma = 1$ \cite{CLgoodboussinesq}. Using the Riemann-Hilbert approaches developed in \cite{CLmain, CLgoodboussinesq}, the long-time behavior of the solution is obtained in \cite{CLmain} for \eqref{badboussinesq}, and in \cite{CLWasymptotics} for \eqref{boussinesqnouxx sigma} with $\sigma = 1$. All the results contained in \cite{CLgoodboussinesq, CLmain, CLWasymptotics, DTT1982} concern initial-value problems. Inverse scattering results for \eqref{badboussinesq} on the half-line were obtained in \cite{C H-L}.

In this paper, we consider the initial-boundary value problem for the ``good" Boussinesq equation \eqref{boussinesqnouxx sigma} with $\sigma =1$ on a half-line, i.e., for
\begin{align}\label{boussinesqnouxx}
& u_{tt} + (u^2)_{xx} + u_{xxxx} = 0.
\end{align}

It will convenient for us to consider the following equation, which can be obtained from \eqref{boussinesqnouxx} by rescaling the $x$ and $t$ coordinates:
\begin{align}\label{boussinesq}
& u_{tt} + \frac{4}{3} (u^2)_{xx} + \frac{1}{3} u_{xxxx} = 0.
\end{align}
Following \cite{Z1974, DTT1982, CLgoodboussinesq}, we rewrite (\ref{boussinesq}) as the system
\begin{align}\label{boussinesqsystem}
& \begin{cases}
 v_{t} + \frac{1}{3}u_{xxx} + \frac{4}{3}(u^{2})_{x} = 0,
 \\
 u_t = v_x,
\end{cases}
\end{align}
which is equivalent to (\ref{boussinesq}) provided that the initial data $u_1(x) := u_t(x,0)$ satisfy
\begin{align}\label{u1zeromean}
\int_\R u_1(x) dx = 0,
\end{align}
see \cite[Lemma 2.8]{CLgoodboussinesq} for details. To be more precise, we consider the initial-boundary value problem for \eqref{boussinesqsystem} in the half-line domain
\begin{align}\label{halflinedomain}
\{0 < x < \infty, \; 0 < t < T\},
\end{align}
where $T > 0$ is a finite constant, and with the initial data 
\begin{align}\label{initial data}
u_0(x) := u(x,0), \qquad v_0(x) := v(x,0), \qquad x \geq 0,
\end{align}
as well as the boundary values
\begin{align}\label{boundaryvalues}
\tilde{u}_{0}(t) := u(0,t), \quad \tilde{u}_{1}(t) := u_{x}(0,t), \quad \tilde{u}_{2}(t) := u_{xx}(0,t), \quad \tilde{v}_0(t) := v(0,t), \qquad t \in [0,T].
\end{align}

Assuming that the solution of the initial-boundary value problem for (\ref{boussinesqsystem}) exists, we will show that it can be expressed in terms of the solution of a $3 \times 3$ matrix Riemann-Hilbert (RH) problem whose jump matrix is expressed in terms of four reflection coefficients $\{r_j(k)\}_{j=1}^{4}$. For conciseness, we restrict ourselves in this paper to smooth solutions with rapid decay as $x \to +\infty$. 

Our main results are presented in the form of two theorems as follows: 
\begin{enumerate}[$-$]
\item Theorem \ref{r1r2th} establishes several properties of the reflection coefficients $\{r_j(k)\}_{j=1}^{4}$. 

\item Theorem \ref{RHth} solves the inverse problem of recovering the solution $\{u(x,t), v(x,t)\}$ of (\ref{boussinesqsystem}) from the reflection coefficients $\{r_j(k)\}_{j=1}^{4}$. The recovery involves the solution $M(x,t,k)$ of a $3 \times 3$ matrix RH problem whose jump contour $\Gamma$ consists of the six lines $\R \cup \omega \R \cup \omega^2 \R \cup i \R \cup \omega i \R \cup \omega^{2} i \R$ where $\omega := e^{2\pi i/3}$, see Figure \ref{fig: Dn}, and whose jump matrix is given explicitly in terms of $\{r_j(k)\}_{j=1}^{4}$, see (\ref{vdef}).
\end{enumerate}
The above theorems are formulated for the system (\ref{boussinesqsystem}). Hence our results also apply to equation (\ref{boussinesq}) provided that $u$ satisfies (\ref{u1zeromean}). 
Moreover, if $u$ satisfies (\ref{boussinesq}) then 
\begin{align}\label{fromboussinesqtogoodboussinesq}
  \tilde{u}(x,t) = \frac{4}{\sqrt{3}}u\bigg(\frac{x}{3^{1/4}},t\bigg) + \frac{1}{2}
\end{align}
satisfies (\ref{goodboussinesq}). As a consequence, we also obtain results for equation (\ref{goodboussinesq}) for solutions approaching $1/2$ as $x \to +\infty$. 

The proofs of our main results use the machinery for the analysis of initial-boundary value problems for integrable evolution equations with $3\times 3$ Lax pairs developed in \cite{L3x3, L2013}, which in turn is based on Fokas's unified transform method \cite{F1997, F2002}. Applications of Fokas's method to linear PDEs and to integrable nonlinear PDEs with $2 \times 2$ Lax pairs can be found in \cite{
BFS2004, BS2003, CFK2025, DFL2021, DTV2014, FIS2005, FL2010, JGM2021, I2003, K2010, P2004, PP2010}.

\begin{figure}
\begin{center}
\begin{tikzpicture}
\node at (0,0) {};
\draw[black,line width=0.45 mm,->-=0.7] (0,0)--(0:4);
\draw[black,line width=0.45 mm,->-=0.7] (0,0)--(30:4);
\draw[black,line width=0.45 mm,->-=0.7] (0,0)--(60:4);
\draw[black,line width=0.45 mm,->-=0.7] (0,0)--(90:4);
\draw[black,line width=0.45 mm,->-=0.7] (0,0)--(120:4);
\draw[black,line width=0.45 mm,->-=0.7] (0,0)--(150:4);
\draw[black,line width=0.45 mm,->-=0.7] (0,0)--(180:4);
\draw[black,line width=0.45 mm,->-=0.7] (0,0)--(-30:4);
\draw[black,line width=0.45 mm,->-=0.7] (0,0)--(-60:4);
\draw[black,line width=0.45 mm,->-=0.7] (0,0)--(-90:4);
\draw[black,line width=0.45 mm,->-=0.7] (0,0)--(-120:4);
\draw[black,line width=0.45 mm,->-=0.7] (0,0)--(-150:4);

\draw[black,line width=0.15 mm] ([shift=(0:1cm)]0,0) arc (0:30:1cm);

\node at (15:3.1) {\small $D_{1}$};
\node at (45:3.1) {\small $D_{2}$};
\node at (75:3.1) {\small $D_{3}$};
\node at (105:3.1) {\small $D_{4}$};
\node at (135:3.1) {\small $D_{5}$};
\node at (165:3.1) {\small $D_{6}$};
\node at (195:3.1) {\small $D_{7}$};
\node at (225:3.1) {\small $D_{8}$};
\node at (255:3.1) {\small $D_{9}$};
\node at (285:3.1) {\small $D_{10}$};
\node at (315:3.1) {\small $D_{11}$};
\node at (345:3.1) {\small $D_{12}$};

\node at (1.4,0.3) {\small $\pi/6$};

\end{tikzpicture}
\end{center}
\begin{figuretext}\label{fig: Dn}
The contour $\Gamma$ and the open sets $D_{n}$, $n=1,\ldots,12$.
\end{figuretext}
\end{figure}

\section{Main results}\label{mainsec}
Our first result concerns the direct problem of constructing  $\{r_j(k)\}_{j=1}^4$ in terms of the initial and boundary values $\{u_0, v_0, \tilde{u}_{0}, \tilde{u}_{1}, \tilde{u}_{2}, \tilde{v}_0\}$ appearing in \eqref{initial data}--\eqref{boundaryvalues}.

\subsection{The direct problem}
Let $\R_{+} := [0,+\infty)$ and let $\mathcal{S}(\R_{+})$ denote the Schwartz class of smooth functions on $\R_+$ that are rapidly decaying at $+\infty$ (but not necessarily at $0$).
Let $T < +\infty$. Let $u_0, v_0 \in \mathcal{S}(\R_{+})$ be two real-valued functions and let $\tilde{u}_{0}, \tilde{u}_{1}, \tilde{u}_{2}, \tilde{v}_0 \in C^{\infty}([0,T])$. These functions are assumed to be such that there exists a solution $\{u(x,t),v(x,t)\}$ of \eqref{boussinesqsystem} for all $x\geq 0$ and $t\in [0,T]$ satisfying \eqref{initial data} and \eqref{boundaryvalues}. The associated reflection coefficients $\{r_j(k)\}_{j=1}^{4}$ are defined as follows (see Section \ref{specsec} for more details).

Let $\omega := e^{\frac{2\pi i}{3}}$ and define $\{l_j(k), z_j(k)\}_{j=1}^3$ by
\begin{align}\label{lmexpressions intro}
&l_j(k) = \omega^j k, \quad z_j(k) = \omega^{2j} k^{2}, \qquad k \in \C.
\end{align}
For $x\geq 0$, $t\in [0,T]$, and $k\in \C\setminus \{0\}$, let $\mathsf{U}(x,t,k)$ and $\mathsf{V}(x,t,k)$ be given by
\begin{align}\label{mathsfUdef intro}
& \mathsf{U}(x,t,k) = P(k)^{-1} \begin{pmatrix}
0 & 0 & 0 \\
0 & 0 & 0 \\
-u_{x}(x,t)-v(x,t)  & -2u(x,t) & 0
\end{pmatrix} P(k), \\
& \mathsf{V}(x,t,k) = P(k)^{-1} \begin{pmatrix}
\frac{4}{3}u(x,t) & 0 & 0 \\[0.1cm]
\frac{u_{x}(x,t)}{3}-v(x,t) & -\frac{2}{3}u(x,t) & 0 \\[0.1cm]
\frac{u_{xx}(x,t)}{3}-v_{x}(x,t) & - \frac{u_{x}(x,t)}{3}-v(x,t) & - \frac{2}{3}u(x,t)
\end{pmatrix} P(k),
\end{align} 
where
\begin{align}\label{Pdef intro}
P(k) = \begin{pmatrix}
\omega & \omega^{2} & 1  \\
\omega^{2} k & \omega k & k \\
k^{2} & k^{2} & k^{2}
\end{pmatrix}.
\end{align}
Define the $3 \times 3$-matrix valued eigenfunctions $\mu_1(0,t,k), \mu_3(x,t,k), \mu^{A}_1(0,t,k), \mu^{A}_3(x,t,k)$ as the unique solutions of the Volterra integral equations
\begin{subequations}\label{mumuAdef intro}
\begin{align}
& \mu_1(0,t,k) = I -  \int_{t}^{T} e^{(t-t')\hat{\mathcal{Z}}(k)} (\mathsf{V}\mu_1)(0,t',k) dt',  \label{mu1def intro} \\
& \mu_3(x,t,k) = I -  \int_{x}^{+\infty} e^{(x-x')\hat{\mathcal{L}}(k)} (\mathsf{U}\mu_3 ) (x',t,k)dx',  \label{mu3def intro} \\
& \mu^{A}_1(0,t,k) = I +  \int_{t}^{T} e^{(t'-t)\hat{\mathcal{Z}}(k)} (\mathsf{V}^{T}\mu^{A}_{1})(0,t',k) dt',  \label{muA1def intro} \\
& \mu^{A}_3(x,t,k) = I + \int_{x}^{+\infty} e^{(x'-x)\hat{\mathcal{L}}(k)} (\mathsf{U}^{T}\mu^{A}_{3})(x',t,k) dx',  \label{muA3def intro}
\end{align}
\end{subequations}
where $\mathcal{L} := \diag(l_1 , l_2 , l_3)$, $\mathcal{Z} := \diag(z_1 , z_2 , z_3)$, $\mathsf{U}^{T}$ and $\mathsf{V}^{T}$ are the transposes of $\mathsf{U}$ and $\mathsf{V}$, respectively, and given a $3\times 3$ matrix $B$, $e^{\hat{B}}$ denotes the operator which acts on a $3 \times 3$ matrix $A$ by $e^{\hat{B}}A = e^B A e^{-B}$. Note that the definitions of $\mu_1(0,t,k), \mu_3(x,0,k), \mu^{A}_1(0,t,k), \mu^{A}_3(x,0,k)$ depend on $\{u(x,t),v(x,t)\}$ only through the initial and boundary values. Define also $s,s^A,S,S^{A}$ by 
\begin{align}
& S(k) = I-\int_{0}^{T}e^{-t\hat{\mathcal{Z}}(k)}(\mathsf{V}\mu_{1})(0,t,k)dt, & & s(k) = I-\int_{0}^{+\infty}e^{-x\hat{\mathcal{L}}(k)}(\mathsf{U}\mu_{3})(x,0,k)dx, \label{def of Ss} \\
& S^{A}(k) = I+\int_{0}^{T}e^{t\hat{\mathcal{Z}}(k)}(\mathsf{V}^{T}\mu_{1}^{A})(0,t,k)dt, & & s^{A}(k) = I+\int_{0}^{+\infty}e^{x\hat{\mathcal{L}}(k)}(\mathsf{U}^{T}\mu_{3}^{A})(x,0,k)dx, \label{def of SAsA}
\end{align}
and set
\begin{align}\label{lol1}
s^L(k) := (S^A(k))^T s(k).
\end{align}


The four spectral functions $\{r_j(k)\}_{j=1}^4$ are defined by
\begin{align}\label{r1r2def}
\begin{cases}
r_1(k) = \frac{s^L_{12}}{s^L_{11}}, & k \in \Gamma_{1}=(0,\infty),
	\\[0.2cm]
r_2(k) = \frac{s^A_{12}S^{A}_{33}-s^{A}_{32}S^{A}_{13}}{s^A_{11}S^{A}_{33}-s^{A}_{31}S^{A}_{13}}, \quad & k \in \Gamma_{7}=(-\infty,0), \\[0.2cm]
r_{3}(k) = \frac{S^{A}_{31}}{s^{A}_{33}} \frac{1}{s^L_{11}}, \quad & k \in \Gamma_{2}=e^{\frac{\pi i}{6}}(0,\infty), \\[0.2cm]
r_{4}(k) = \frac{S^{A}_{31}}{s^{A}_{33}} \frac{s_{21}}{s^{A}_{33}S^{A}_{11}-s^{A}_{13}S^{A}_{31}}, \quad & k \in \Gamma_{2}=e^{\frac{\pi i}{6}}(0,\infty).
\end{cases}
\end{align}

\subsubsection{Assumption of no solitons}
Solitons correspond to the presence of zeros for certain spectral functions. For simplicity, we assume throughout the paper that no solitons are present. 
\begin{assumption}[Absence of solitons]\label{solitonlessassumption}\upshape
Assume that
\begin{itemize}
\item $(S^{-1}s)_{11}$ is nonzero on $(\bar{D}_{1}\cup \bar{D}_{2}) \setminus\{0\}$,
\item $s^{A}_{33}$ is nonzero on $\bar{D}_{1}\setminus\{0\}$,
\item $s^{A}_{33}S^{A}_{11}-s^{A}_{13}S^{A}_{31}$ is nonzero on $\bar{D}_{2}\setminus\{0\}$.
\end{itemize}
\end{assumption}

\subsubsection{Assumption of generic behavior at $k = 0$}
We will also make the following assumption, which holds true for generic initial and boundary values.

\begin{assumption}[Generic behavior at $k = 0$]\label{originassumption}\upshape
Assume that
\begin{align*}
& \lim_{k \to 0} k^2 s(k)_{33} \neq 0, && \lim_{k \to 0} k^2 s^A(k)_{33} \neq 0, \\
& \lim_{k \to 0} k^2 S(k)_{33} \neq 0, && \mathscr{S}^{A(-2)} := \lim_{k \to 0} k^2 S^A(k)_{33} \neq 0, \\
& \lim_{k \to 0} k \big( S^A(k)_{33} - k^{-2}\mathscr{S}^{A(-2)} \big) \neq 0, &&  \lim_{k \to 0} k \big( S^A(k)_{32} - k^{-2}\mathscr{S}^{A(-2)} \big) \neq 0,
	\\
& \lim_{k\to 0} k^2 (S(k)^{-1}s(k))_{11} 
 \neq 0, 
&&
\lim_{k\to 0} k^3\big( s^A(k)_{33} S^A(k)_{11} - s^A(k)_{13} S^A(k)_{31} \big)
\neq 0.
\end{align*}
\end{assumption}

\subsubsection{Statement of the first theorem}
We can now state our first theorem, which concerns the direct problem, that is, the map from $\{u_0, v_0, \tilde{u}_{0}, \tilde{u}_{1}, \tilde{u}_{2}, \tilde{v}_0\}$ to $\{r_j\}_{j=1}^4$. 
The proof is presented in Section \ref{r1r2subsec}.

\begin{theorem}[Properties of $r_j(k)$, $j=1,2,3,4$]\label{r1r2th}
Suppose $u_0,v_0 \in \mathcal{S}(\R_{+})$ and $\tilde{u}_{0}, \tilde{u}_{1}, \tilde{u}_{2}, \tilde{v}_0 \in C^{\infty}([0,T])$ are such that Assumption \ref{solitonlessassumption} holds. Then the spectral functions $r_1:(0,\infty) \to \C$, $r_2:(-\infty,0) \to \C$ and $r_{3},r_{4}:\Gamma_{2}\to \C$ are well-defined by (\ref{r1r2def}) and have the following properties:
\begin{enumerate}[$(i)$]
 \item $r_1 \in C^\infty((0,\infty))$, $r_2 \in C^\infty((-\infty,0))$ and $r_{3},r_{4}\in C^{\infty}(\Gamma_{2})$. 
 
 \item The functions $r_1(k)$, $r_2(k)$, $r_3(k)$, $r_4(k)$, and their derivatives $\partial_{k}^{j}r_{l}(k)$ have continuous boundary values at $k=0$ for $l = 1,2,3,4$ and for all $j = 0,1,2,\ldots$, and there exist expansions
 \begin{subequations}\label{r1r2atzero}
\begin{align}
& r_{1}(k) = r_{1}(0) + r_{1}'(0)k + \tfrac{1}{2}r_{1}''(0)k^{2} + \cdots, & & k \to 0, \ k >0, \\
& r_{2}(k) = r_{2}(0) + r_{2}'(0)k + \tfrac{1}{2}r_{2}''(0)k^{2} + \cdots, & & k \to 0, \ k <0, \\
& r_{3}(k) = r_{3}(0) + r_{3}'(0)k + \tfrac{1}{2}r_{3}''(0)k^{2} + \cdots, & & k \to 0, \ k \in \Gamma_{2}, \\
& r_{4}(k) = r_{4}(0) + r_{4}'(0)k + \tfrac{1}{2}r_{4}''(0)k^{2} + \cdots, & & k \to 0, \ k \in \Gamma_{2},
\end{align}
\end{subequations}
which can be differentiated termwise any number of times.

\item Suppose moreover that Assumption \ref{originassumption} holds. Then the leading coefficients satisfy
\begin{align}\label{r1r2at0}
r_{1}(0) = \omega, \quad r_{2}(0) = 1, \quad r_{3}(0)=r_{3}'(0)=r_{4}(0)=0, \quad r_{3}''(0)\neq 0 \neq r_{4}'(0).
\end{align}
In particular, as $k\to 0$, $k\in \Gamma_{2}$, $r_{3}(k)$ vanishes quadradically and $r_{4}(k)$ vanishes linearly.

\item There exist expansions
 \begin{subequations}\label{r1r2r3r4 slow decay}
\begin{align}
& r_{1}(k) = r_{1}^{(1)} k^{-1} + r_{1}^{(2)}k^{-2} + \cdots, & & k \to \infty, \ k >0, \\
& r_{2}(k) = r_{2}^{(1)} k^{-1} + r_{2}^{(2)}k^{-2} + \cdots, & & k \to \infty, \ k <0, \\
& r_{3}(k) = r_{3}^{(1)} k^{-1} + r_{3}^{(2)}k^{-2} + \cdots, & & k \to \infty, \ k \in \Gamma_{2}, \\
& r_{4}(k) = r_{4}^{(2)} k^{-2} + r_{4}^{(3)}k^{-3} + \cdots, & & k \to \infty, \ k \in \Gamma_{2},
\end{align}
\end{subequations}
which can be differentiated termwise any number of times. Moreover, 
\begin{align}\label{lol3}
r_{4}^{(2)} = \big(\overline{r_{1}^{(1)}}-\omega^{2}r_{3}^{(1)}\big)r_{3}^{(1)}.
\end{align}

\item
 If $(S^{T}s^{A})_{33}$ is nonzero on $(0,+\infty)$, then $|r_{1}(k)|<1$ for all $k > 0$.
 If $S^{A}_{33}$ is nonzero on $(-\infty,0)$, then $|r_{2}(k)| <1$ for all $k < 0$.

\end{enumerate} 
\end{theorem}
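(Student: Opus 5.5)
The plan is to derive all five properties from the analytic and asymptotic properties of the spectral functions $s, s^A, S, S^A$, which come from the Volterra equations (\ref{mumuAdef intro}). These properties are presumably established in Section \ref{specsec}. First, I would record the domains of boundedness and analyticity of the columns of $\mu_3, \mu_3^A$ as functions of $k$; these are determined by the signs of $\re(l_i - l_j)$. I would do the same for $\mu_1, \mu_1^A$, now governed by $\re(z_i - z_j)$. Since $T<\infty$, the $t$-integrals are over a compact interval, so $S$ and $S^A$ are entire in $k\neq 0$; the only singularity comes from $P(k)^{-1}$ at $k=0$. By contrast, $s$ and $s^A$ have columns that extend analytically to sectors made of unions of the $D_n$, with $C^\infty$ boundary values on $\Gamma$, and all derivatives are obtained by differentiating the Volterra series, using the Schwartz decay of $u_0,v_0$. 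Assumption \ref{solitonlessassumption} guarantees that the denominators in (\ref{r1r2def}) do not vanish on the relevant rays. We also need the identities $s^{L}_{11} = (S^{-1}s)_{11}$ (via $S^A = (S^{-1})^T$, i.e. $\det S = 1$), so that the first bullet applies to $r_1$ and $r_3$, and $s^A = (s^{-1})^T$. Part $(i)$ then follows.

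For $(ii)$ and $(iii)$, I would expand $P(k)^{-1}$, which has a pole of order two at $k=0$. I would then show, as in \cite{CLgoodboussinesq}, that $\mu_3, \mu_3^A, \mu_1, \mu_1^A$ admit Laurent expansions $\sum_{j\ge -2} \mu^{(j)} k^j$ at $0$ along each closed sector. This gives expansions of $s, s^A, S, S^A$ of the form $k^{-2}(\cdot) + k^{-1}(\cdot) + \dots$, with leading coefficients of rank one, having the special structure inherited from $P(k)$: all rows proportional to $(\omega,\omega^2,1)$-type vectors. The quotients in (\ref{r1r2def}) then have singular parts that cancel, which yields Taylor expansions at $0$, i.e. $(ii)$. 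Under Assumption \ref{originassumption} the leading terms of numerator and denominator are nonzero and can be compared explicitly. For $r_1$ this ratio of leading rank-one coefficients gives $\omega$, for $r_2$ it gives $1$. For $r_3$ and $r_4$ the numerator $S^{A}_{31}$ has lower order than the denominators, which produces the vanishing orders in (\ref{r1r2at0}). I expect this to be the main obstacle: the bookkeeping of several orders of the Laurent expansions, including subleading terms (hence the conditions on $k(S^A_{33}-k^{-2}\mathscr{S}^{A(-2)})$), is needed to get $r_3''(0)\neq0$ exactly. I would organize it by writing each spectral function as $P$-conjugated lower-triangular data and computing symbolically.

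For $(iv)$, I would use large-$k$ WKB-type expansions $\mu = I + \mu^{(1)}/k + \dots$ on each ray. These come from integrating by parts in the Volterra equations; on $\Gamma_1,\Gamma_7,\Gamma_2$ the oscillatory exponentials have purely imaginary exponent, or decaying exponent for the relevant entries. This gives expansions $s = I + O(1/k)$ and $S = I + O(1/k)$ off-diagonal, and hence the decay $k^{-1}$ for $r_1,r_2,r_3$. For $r_4$, the product $S^A_{31}s_{21}$ is $O(k^{-2})$. Identifying the coefficients through the symmetries $k\mapsto \omega k$ and $k\mapsto \bar k$ (the latter using the reality of $u,v$) gives (\ref{lol3}). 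Finally, for $(v)$, I would use the unitarity-type relation on $\R$ obtained from the symmetry $s(k)=\mathcal{A}\overline{s^A(\bar k)}\mathcal{A}$, or its equivalent. Combined with $\det=1$, this yields an identity of the form $|s^L_{11}|^2 - |s^L_{12}|^2 = |(S^Ts^A)_{33}|^2 >0$ on $(0,\infty)$, and similarly with $S^A_{33}$ on $(-\infty,0)$. These give $|r_1|<1$ and $|r_2|<1$.
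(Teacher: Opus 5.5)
\textbf{Parts (i)--(iv).} These follow the paper's route: everything is reduced to the domains, smoothness, small-$k$ and large-$k$ expansions, and $\mathcal{A}$/$\mathcal{B}$-symmetries of $s,S,s^A,S^A$. There is one small omission in (i). The denominator $s^A_{11}S^A_{33}-s^A_{31}S^A_{13}$ of $r_2$ is not covered directly by Assumption \ref{solitonlessassumption}. You need the symmetry identity $(s^A_{33}S^A_{11}-s^A_{13}S^A_{31})(k)=\overline{(s^A_{11}S^A_{33}-s^A_{31}S^A_{13})(\overline{\omega k})}$, which carries the ray $\arg k=\pi/3\subset\bar D_2$ onto $(-\infty,0)$.

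\textbf{Part (v) has a genuine gap.} There is no symmetry of the form $s(k)=\mathcal{A}\overline{s^A(\bar k)}\mathcal{A}$ in this problem. The only symmetries are \eqref{sym of s and S}--\eqref{sym of sA and SA}, acting on each function separately. The adjoint spectral problem arises from the original one by $v\mapsto -v$, $k\mapsto -k$, not by complex conjugation, so there is no unitarity relation.

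The identity you write is also wrong. On $(0,\infty)$ the first two columns of $X:=S^{-1}s=s^L$ exist by \eqref{global rel}. Since $\det X=1$, the upper-left $2\times 2$ minor of $X$ equals the cofactor $(S^Ts^A)_{33}$. The $\mathcal{B}$-symmetry gives $X_{22}=\overline{X_{11}}$ and $X_{21}=\overline{X_{12}}$ for real $k$. Hence
\[
|s^L_{11}|^2-|s^L_{12}|^2=(S^Ts^A)_{33},\qquad\text{i.e.}\qquad 1-|r_1|^2=\frac{(S^Ts^A)_{33}}{|s^L_{11}|^2}.
\]
This involves $(S^Ts^A)_{33}$ itself, not $|(S^Ts^A)_{33}|^2$; your version would force $(S^Ts^A)_{33}\in\{0,1\}$.

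The correct identity only shows that $(S^Ts^A)_{33}$ is real; its sign is not determined algebraically. The missing idea is a continuity argument. The function $1-|r_1|^2$ is real and continuous on $(0,\infty)$, never vanishes by the hypothesis of (v), and tends to $1$ as $k\to+\infty$ by (iv). Hence it is positive. This is precisely why the nonvanishing hypothesis appears in (v).

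For $r_2$ the argument is not merely ``similar.'' On $k<0$ you must use the cofactor identities expressing $s_{13},s_{23},s_{33}$ through the first two columns of $s^A$. Together with the $\mathcal{A}$- and $\mathcal{B}$-symmetries, these give
\[
1-|r_2(k)|^2=\frac{S^A_{33}(k)\,(S^{-1}s)_{11}(\omega^2k)}{|s^A_{11}S^A_{33}-s^A_{31}S^A_{13}|^2(k)}.
\]
The numerator is nonzero only by combining the hypothesis on $S^A_{33}$ with Assumption \ref{solitonlessassumption}, since $\omega^2k$ lies on the ray $\arg k=\pi/3\subset\bar D_2$. Positivity then again follows from continuity and the limit $1$ as $k\to-\infty$.
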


\subsection{The inverse problem}
We next consider the inverse problem of recovering $\{u, v\}$ from the scattering data. Since we are assuming that no solitons are present, the scattering data consists only of the four reflection coefficients $\{r_{j}(k)\}_{j=1}^{4}$. We will show that the inverse problem can be solved by means of a RH problem for a $3 \times 3$-matrix valued function $M$ whose jump matrix is expressed in terms of $\{r_{j}(k)\}_{j=1}^{4}$.

\subsubsection{The RH problem for $M$}
Let $\Gamma$ be the contour consisting of the six lines $\R \cup \omega \R \cup \omega^2 \R \cup i\R \cup \omega i\R \cup \omega^2 i\R$ oriented away from the origin as in Figure \ref{fig: Dn}.
For $1 \leq i \neq j \leq 3$, define $\theta_{ij} \equiv \theta_{ij}(x,t,k)$ by
$$\theta_{ij}(x,t,k) = (l_i - l_j)x + (z_i - z_j)t.$$
Given a function $f(k)$ of $k\in \C$, we let $f^{*}$ denote the Schwartz conjugate of $f$, i.e., $f^{*}(k) = \overline{f(\overline{k})}$. Define the jump matrix $v(x,t,k)$ for $k \in \Gamma$ by
\begin{align}
&  v_1 = \begin{pmatrix}  
 1 & - r_1(k)e^{-\theta_{21}} & 0 \\
  r_1^*(k)e^{\theta_{21}} & 1 - |r_1(k)|^2 & 0 \\
  0 & 0 & 1
  \end{pmatrix}, &&  v_2 = \begin{pmatrix}   
 1 & 0 & -r_{3}(k)e^{-\theta_{31}} \\
 0 & 1 & r_4(k) e^{-\theta_{32}} \\
 0 & 0 & 1 \end{pmatrix}, \nonumber \\ 
& v_{3} = \begin{pmatrix}
1 & 0 & 0 \\
0 & 1-|r_{2}(\omega k)|^{2} & -r_{2}^{*}(\omega k)e^{-\theta_{32}} \\
0 & r_{2}(\omega k)e^{\theta_{32}} & 1
\end{pmatrix}, && v_{4} = \begin{pmatrix}
1 & r_{3}^{*}(\omega^{2}k)e^{-\theta_{21}} & 0 \\
0 & 1 & 0 \\
0 & -r_{4}^{*}(\omega^{2}k)e^{\theta_{32}} & 1
\end{pmatrix}, \nonumber \\
& v_{5} = \begin{pmatrix}
1-|r_{1}^{*}(\omega^{2}k)|^{2} & 0 & r_{1}^{*}(\omega^{2}k)e^{-\theta_{31}} \\
0 & 1 & 0  \\
-r_{1}(\omega^{2}k)e^{\theta_{31}} & 0 & 1
\end{pmatrix}, && v_{6} = \begin{pmatrix}
1 & r_{4}(\omega^{2}k)e^{-\theta_{21}} & 0 \\
0 & 1 & 0 \\
0 & -r_{3}(\omega^{2}k)e^{\theta_{32}} & 1
\end{pmatrix}, \nonumber \\
&  v_7 = \begin{pmatrix}  
  1 - |r_2(k)|^2 & -r_2^*(k) e^{-\theta_{21}} & 0 \\
  r_2(k)e^{\theta_{21}} & 1 & 0 \\
  0 & 0 & 1
   \end{pmatrix}, && v_{8} = \begin{pmatrix}
   1 & 0 & 0 \\
   -r_{4}^{*}(\omega k)e^{\theta_{21}} & 1 & 0 \\
   r_{3}^{*}(\omega k)e^{\theta_{31}} & 0 & 1
\end{pmatrix}, \nonumber \\
& v_{9} = \begin{pmatrix}
1 & 0 & 0 \\
0 & 1 & -r_{1}(\omega k)e^{-\theta_{32}} \\
0 & r_{1}^{*}(\omega k)e^{\theta_{32}} & 1 - |r_{1}(\omega k)|^{2}
\end{pmatrix}, && v_{10} = \begin{pmatrix}
1 & 0 & 0 \\
-r_{3}(\omega k)e^{\theta_{21}} & 1 & 0 \\
r_{4}(\omega k)e^{\theta_{31}} & 0 & 1
\end{pmatrix}, \nonumber \\
& v_{11} = \begin{pmatrix}
1 & 0 & r_{2}(\omega^{2}k)e^{-\theta_{31}}  \\
0 & 1 & 0 \\
-r_{2}^{*}(\omega^{2}k)e^{\theta_{31}} & 0 & 1-|r_{2}(\omega^{2}k)|^{2}
\end{pmatrix}, && v_{12} = \begin{pmatrix}
1 & 0 & -r_{4}^{*}(k)e^{-\theta_{31}} \\
0 & 1 & r_{3}^{*}(k) e^{-\theta_{32}} \\
0 & 0 & 1
\end{pmatrix}, \label{vdef}
\end{align}
where $v_j$ denotes the restriction of $v$  to the subcontour of $\Gamma$ labeled by $j$ in Figure \ref{fig: Dn}. In particular, the following symmetries hold:
\begin{align}\label{vsymm}
v(x,t,k) = \mathcal{A} v(x,t,\omega k)\mathcal{A}^{-1}  = \mathcal{B} \overline{v(x,t, \bar{k})}^{-1}\mathcal{B}, \qquad k \in \Gamma,
\end{align}
where
\begin{align}\label{def of Acal and Bcal}
\mathcal{A} := \begin{pmatrix}
0 & 0 & 1 \\
1 & 0 & 0 \\
0 & 1 & 0
\end{pmatrix} \qquad \mbox{ and } \qquad \mathcal{B} := \begin{pmatrix}
0 & 1 & 0 \\
1 & 0 & 0 \\
0 & 0 & 1
\end{pmatrix}.
\end{align}
Let $D_{1}, D_{2}$ be the two open subsets of the plane given by (see also Figure \ref{fig: Dn})
\begin{align*}
D_{1} & = \{ k \in \C \,|\, \re l_{1} < \re l_{2} < \re l_{3} \mbox{ and } \re z_{2} < \re z_{1} < \re z_{3} \}, \\
D_{2} & = \{ k \in \C \,|\, \re l_{1} < \re l_{2} < \re l_{3} \mbox{ and } \re z_{2} < \re z_{3} < \re z_{1} \}.
\end{align*}

We consider the following RH problem.

\begin{RHproblem}[RH problem for $M$]\label{RH problem for M}
Find a $3 \times 3$-matrix valued function $M(x,t,k)$ with the following properties:
\begin{enumerate}[(a)]
\item $M(x,t,\cdot) : \mathbb{C}\setminus \Gamma \to \mathbb{C}^{3 \times 3}$ is analytic.

\item The limits of $M(x,t,k)$ as $k$ approaches $\Gamma\setminus \{0\}$ from the left and right exist, are continuous on $\Gamma\setminus \{0\}$, and are denoted by $M_+$ and $M_-$, respectively. Furthermore, they are related by
\begin{align}\label{Mjumpcondition}
& M_{+}(x,t,k) = M_{-}(x,t,k)v(x,t,k), \qquad k \in \Gamma.
\end{align}

\item As $k \to \infty$, $k \notin \Gamma$, we have
\begin{align*}
M(x,t,k) = I + \frac{M^{(1)}(x,t)}{k} + \frac{M^{(2)}(x,t)}{k^{2}} + O\bigg(\frac{1}{k^3}\bigg),
\end{align*}
where the matrices $M^{(1)}$ and $M^{(2)}$ depend on $x$ and $t$ but not on $k$, and satisfy
\begin{align}\label{singRHMatinftyb}
M_{12}^{(1)} = M_{13}^{(1)} = M_{12}^{(2)} + M_{13}^{(2)} = 0.
\end{align}

\item There exist matrices $\{\mathcal{M}_j^{(l)}(x,t)\}_{l=-2}^{+\infty}$, $j=1,2$, depending on $x$ and $t$ but not on $k$ such that, for any $N \geq -2$,
\begin{align}\label{singRHMat0}
& M(x,t,k) = \sum_{l=-2}^{N} \mathcal{M}_1^{(l)}(x,t)k^{l} + O(k^{N+1}) \qquad \text{as}\ k \to 0, \ k \in D_1, \\
& M(x,t,k) = \sum_{l=-2}^{N} \mathcal{M}_2^{(l)}(x,t)k^{l} + O(k^{N+1}) \qquad \text{as}\ k \to 0, \ k \in D_2.
\end{align}
Furthermore, there exist scalar coefficients $\alpha, \beta, \gamma, \delta, \epsilon_{1}, \epsilon_{2}$ depending on $x$ and $t$, but not on $k$, such that
\begin{align} \label{explicit Mcalpm2p}
\mathcal{M}_{1}^{(-2)}(x,t) = \mathcal{M}_{2}^{(-2)}(x,t) = &\; \alpha(x,t) \begin{pmatrix}
\omega & 0 & 0 \\
\omega & 0 & 0 \\
\omega & 0 & 0
\end{pmatrix}, 
	\\
\mathcal{M}_{1}^{(-1)}(x,t) = \mathcal{M}_{2}^{(-1)}(x,t)  = &\; \beta(x,t) \begin{pmatrix}
\omega^{2} & 0 & 0 \\
\omega^{2} & 0 & 0 \\
\omega^{2} & 0 & 0 
\end{pmatrix} + \gamma(x,t) \begin{pmatrix}
\omega^{2} & 0 & 0 \\
1 & 0 & 0 \\
\omega & 0 & 0
\end{pmatrix} \nonumber \\
& + \delta(x,t) \begin{pmatrix}
0 & 1-\omega & 0 \\
0 & 1-\omega & 0 \\
0 & 1-\omega & 0
\end{pmatrix}, \label{explicit Mcalpm1p}
\end{align}
and the third columns of $\mathcal{M}_{1}^{(0)}(x,t)$ and $\mathcal{M}_{2}^{(0)}(x,t)$ are given by
\begin{align}
[\mathcal{M}_{1}^{(0)}(x,t)]_{3} = \epsilon_{1}(x,t) \begin{pmatrix}
1 \\
1 \\
1 
\end{pmatrix}, \qquad [\mathcal{M}_{2}^{(0)}(x,t)]_{3} = \epsilon_{2}(x,t) \begin{pmatrix}
1 \\
1 \\
1 
\end{pmatrix}, \label{explicit Mcalp0p third column}
\end{align}
where $[A]_j$ denotes the $j$th column of a matrix $A$.

\item $M$ satisfies the symmetries
\begin{align}\label{singRHsymm}
M(x,t, k) = \mathcal{A} M(x,t,\omega k)\mathcal{A}^{-1} = \mathcal{B} \overline{M(x,t,\overline{k})}\mathcal{B}, \qquad k \in \C \setminus \Gamma.
\end{align}
\end{enumerate}
\end{RHproblem}

The functions $\alpha, \beta, \gamma, \delta, \epsilon_{1}, \epsilon_{2}$ are not prescribed in the formulation of RH problem \ref{RH problem for M}. The solution $M$ of RH problem \ref{RH problem for M} is unique (this can be proved in the same way as in \cite[Appendix A]{CLgoodboussinesq}).
%


\subsubsection{Statement of the second theorem}
Our main theorem states that the solution $\{u(x,t),$ $ v(x,t)\}$ of the Boussinesq equation \eqref{boussinesqsystem} can be recovered from the solution $M(x,t,k)$ of RH problem \ref{RH problem for M} via the relations (\ref{recoveruv}).

\begin{definition}\upshape
Let $T > 0$. We call $\{u(x,t), v(x,t)\}$ a {\it Schwartz class solution of \eqref{boussinesqsystem} on $\R_+ \times [0, T]$ with initial data $u_0, v_0 \in \mathcal{S}(\R_{+})$ and boundary values $\tilde{u}_{0}, \tilde{u}_{1}, \tilde{u}_{2}, \tilde{v}_0 \in C^{\infty}([0,T])$} if
\begin{enumerate}[$(i)$] 
  \item $u,v$ are smooth real-valued functions of $(x,t) \in \R_{+} \times [0,T]$.

\item $u,v$ satisfy \eqref{boussinesqsystem} for $(x,t) \in \R_{+} \times [0,T]$ and 
\begin{align*}
& u(x,0) = u_0(x), \quad v(x,0) = v_0(x), \qquad x \in \R_{+}, \\
& u(0,t) = \tilde{u}_{0}(t), \quad u_{x}(0,t) = \tilde{u}_{1}(t), \quad u_{xx}(0,t) = \tilde{u}_{2}(t), \quad v(0,t) = \tilde{v}_0(t), \qquad t \in [0,T].
\end{align*}
  \item $u,v$ have rapid decay as $x \to +\infty$ in the sense that, for each integer $N \geq 1$,
$$\sup_{\substack{x \in \R_{+} \\ t \in [0, T]}} \sum_{i =0}^N (1+|x|)^N(|\partial_x^i u| + |\partial_x^i v| ) < \infty.$$
\end{enumerate} 
\end{definition}

The second theorem can now be stated.

\begin{theorem}[Solution of (\ref{boussinesqsystem}) via inverse scattering]\label{RHth}
Suppose $\{u(x,t), v(x,t)\}$ is a Schwartz class solution of (\ref{boussinesqsystem}) on $\R_+ \times [0, T]$ for some $T > 0$, with initial data $u_0, v_0 \in \mathcal{S}(\R_{+})$ and boundary values $\tilde{u}_{0}, \tilde{u}_{1}, \tilde{u}_{2}, \tilde{v}_0 \in C^{\infty}([0,T])$ such that Assumptions \ref{solitonlessassumption} and \ref{originassumption} hold. Define the spectral functions $r_j(k)$, $j = 1,2,3,4$, in terms of $u_0, v_0, \tilde{u}_{0}, \tilde{u}_{1}, \tilde{u}_{2}, \tilde{v}_0$ by (\ref{r1r2def}).
Then RH problem \ref{RH problem for M} has a unique solution $M(x,t,k)$ for each $(x,t) \in \R_{+} \times [0,T]$ and the formulas \begin{align}\label{recoveruv}
\begin{cases}
 \displaystyle{u(x,t) = -\frac{3}{2}\frac{\partial}{\partial x}\lim_{k\to \infty}k\big[(M(x,t,k))_{33} - 1\big],}
	\vspace{.1cm}\\
 \displaystyle{v(x,t) = -\frac{3}{2}\frac{\partial}{\partial t}\lim_{k\to \infty}k\big[(M(x,t,k))_{33} - 1\big],}
\end{cases}
\end{align}
which express $\{u(x,t), v(x,t)\}$ in terms of $M$, are valid for all $(x,t) \in \R_{+} \times [0,T]$.
\end{theorem}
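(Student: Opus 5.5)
The plan is to follow the unified-transform scheme of \cite{L3x3, L2013}, adapted to the singular behavior at $k=0$ handled in \cite{CLgoodboussinesq}. Since $\{u,v\}$ is assumed to exist, we first build eigenfunctions of the full Lax pair $\mu_x - [\mathcal{L},\mu] = \mathsf{U}\mu$, $\mu_t - [\mathcal{Z},\mu] = \mathsf{V}\mu$ on the whole domain $\R_+\times[0,T]$. These are the solutions $\mu_1,\mu_2,\mu_3$ normalized at $(0,T)$, $(0,0)$ and $x=+\infty$, together with their adjoints $\mu_j^A$. They are defined by Volterra equations along paths in the $(x,t)$-plane, and for $\mu_1,\mu_2$ each entry is integrated along a path chosen according to the signs of $\re(l_i-l_j)$ and $\re(z_i-z_j)$ in each sector $D_n$. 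Compatibility of the Lax pair, which is equivalent to \eqref{boussinesqsystem}, makes these well defined. Evaluating the relations $\mu_3 = \mu_2 e^{\hat\theta}s$ and $\mu_1=\mu_2 e^{\hat\theta}S$ at $(x,t)=(0,0)$ and $(0,T)$ recovers the spectral functions \eqref{def of Ss}--\eqref{def of SAsA}.

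Next, in each sector $D_n$ we define $M_n$ column by column as a combination of columns of $\mu_j$ and of wedge products of columns of $\mu_j^A$ (using $\mu^A = (\mu^{-1})^T$ and the cofactor identity). The combination is chosen so that each column is bounded and analytic in $D_n$ and the exponentials in the jumps have the right decay. Equivalently, following \cite{L3x3}, $M_n$ is the solution of the Fredholm/Volterra system with the normalization points $(x_{ij}^n,t_{ij}^n)$ chosen per entry. Then:
\begin{enumerate}[$(1)$]
\item Using the relations among $\mu_1,\mu_2,\mu_3,s,S$, we compute the jumps $M_n^{-1}M_m$ across each ray. In $D_1\cup D_2$ this yields exactly the expressions \eqref{r1r2def}, and hence the jumps $v_1,v_2$.
\item The $\mathbb{Z}_3$ symmetry $\mathsf U(\omega k)=\mathcal A^{-1}\mathsf U(k)\mathcal A$ and the conjugation symmetry give \eqref{singRHsymm}, and hence $v_3,\dots,v_{12}$ via \eqref{vsymm}.
\item Assumption \ref{solitonlessassumption} ensures the denominators do not vanish, so $M$ has no poles.
\item The large-$k$ asymptotics of $\mu_j$, obtained as in \cite{CLgoodboussinesq} by WKB-type expansions, give (c), including \eqref{singRHMatinftyb}. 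Substituting the expansion into the $x$- and $t$-parts of the Lax pair yields \eqref{recoveruv}, with $-\frac32\partial_x M^{(1)}_{33}=u$ and similarly for $v$ from the $t$-equation.
\end{enumerate}

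The main obstacle is property (d), the behavior at $k=0$. Here $P(k)$ degenerates, the $\mu_j$ have poles of order up to two, and the ratios defining $M_n$ involve $s_{33}$, $S^A_{33}$ and similar quantities that blow up. We plan to expand $P(k)\mu_j(k)P(k)^{-1}$ near $0$, which is regular because the undressed Lax pair is polynomial in $k$. From this we read off that the singular parts of $\mu_j$ lie in the specific rank-one structures $(\omega,\omega,\omega)^T$ and the like. We then use Assumption \ref{originassumption} to show that the leading coefficients of the relevant denominators are nonzero. This yields \eqref{explicit Mcalpm2p}, \eqref{explicit Mcalpm1p} and the third-column structure \eqref{explicit Mcalp0p third column}, and also shows that the expansions in $D_1$ and $D_2$ share the singular coefficients.

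Finally, existence follows by construction, since $M$ built from the eigenfunctions solves RH problem \ref{RH problem for M}. Uniqueness follows as in \cite[Appendix A]{CLgoodboussinesq}: we form $M\tilde M^{-1}$ and use $\det v=1$, the symmetries, and the prescribed singular structure at $0$ to show this product is entire and tends to $I$ at infinity.
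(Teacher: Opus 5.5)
Your overall architecture matches the paper's: eigenfunctions $\mu_j,\mu_j^A$, sectionally defined $M_n$, jumps from the factorization through $s,S$, symmetries, expansions at $\infty$ and at $0$ via a regularized Lax pair and Assumption \ref{originassumption}, and uniqueness as in \cite[Appendix A]{CLgoodboussinesq}. However, step $(2)$ fails as stated. The group generated by $k\mapsto\omega k$ and $k\mapsto\bar k$ acts on the twelve rays with three orbits: $\{\Gamma_1,\Gamma_5,\Gamma_9\}$, $\{\Gamma_2,\Gamma_4,\Gamma_6,\Gamma_8,\Gamma_{10},\Gamma_{12}\}$ and $\{\Gamma_3,\Gamma_7,\Gamma_{11}\}$. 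So \eqref{vsymm} produces only nine of the twelve jump matrices from $v_1$ and $v_2$. The missing ones, $v_3$, $v_7$, $v_{11}$, are exactly the ones carrying $r_2$. Correspondingly, your claim that the computation in $D_1\cup D_2$ yields all of \eqref{r1r2def} is not right: only $r_1,r_3,r_4$ arise on $\Gamma_1,\Gamma_2$, while $r_2$ is defined on $\Gamma_7=(-\infty,0)$. One further direct computation is needed. The paper's Lemma \ref{Snexplicitlemma} lists $T_6$ and $T_7$, and the jump on $\Gamma_7$ is $e^{x\hat{\mathcal{L}}+t\hat{\mathcal{Z}}}(T_6^{-1}T_7)$, whose $(2,1)$ entry is precisely $r_2$. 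Alternatively, compute $M_2^{-1}M_3$ on $\Gamma_3$, using that $D_3$ lies in the symmetry orbit of $D_2$, so $M_3$ is determined by $M_2$.

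Second, step $(1)$ relies on the matrix relation $\mu_3=\mu_2e^{x\hat{\mathcal{L}}+t\hat{\mathcal{Z}}}s$. This holds on a common $k$-domain only when $u_0,v_0$ have compact support; for Schwartz data the columns of $\mu_3$ and of $s$ live on different sectors with no common intersection. The paper first proves the factorization $M_n=\mu_3e^{x\hat{\mathcal{L}}+t\hat{\mathcal{Z}}}T_n$, the jump relation (Lemma \ref{Mjumplemma}) and the representation of $M_1,M_2$ (Lemma \ref{M1XYlemma}) for compactly supported data, where $T_n=\lim_{x\to\infty}e^{-x\hat{\mathcal{L}}}M_n(x,0,k)$ exists. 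It then removes this restriction by a cutoff-and-limit argument as in \cite[Lemma 4.5]{CLgoodboussinesq} and \cite[Proposition 2.6]{L3x3}. Your plan needs the same step, or else a direct check that your column representation of $M_n$ satisfies the defining integral equations. A minor slip: the entry-dependent choice of contours belongs to $M_n$, whereas $\mu_1$ and $\mu_2$ are each defined with the single contour $\gamma_1$, respectively $\gamma_2$. Obtaining $v$ from the $t$-part of the Lax pair, rather than from $u_t=v_x$ and decay at infinity as the paper does, is fine.
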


The proof of Theorem \ref{RHsec}, which does not rely on Theorem \ref{r1r2th}, is presented in Section \ref{RHsec}.

The RH problem \ref{RH problem for M} is singular at the origin in the sense that the solution $M$ is allowed to have a double pole at $k = 0$. 
It is sometimes convenient to have a RH problem which is regular at the origin.
Such a RH problem can be obtained by introducing the row-vector-valued function $n$ by
\begin{align}\label{ndef}
n(x,t,k) = \begin{pmatrix}\omega & \omega^2 & 1 \end{pmatrix} M(x,t,k).
\end{align}
Indeed, since the coefficients of $k^{-2}$ and $k^{-1}$ in the expansion of $M$ at $k = 0$ are such that they vanish when premultiplied by the row-vector $(\omega,\omega^{2},1)$ (see (\ref{explicit Mcalpm2p}) and (\ref{explicit Mcalpm1p})), the function $n$ satisfies the following vector RH problem.

\begin{RHproblem}[RH problem for $n$]\label{RHn}
Find a $1 \times 3$-row-vector valued function $n(x,t,k)$ with the following properties:
\begin{enumerate}[(a)]
\item $n(x,t,\cdot) : \C \setminus \Gamma \to \mathbb{C}^{1 \times 3}$ is analytic.

\item The limits of $n(x,t,k)$ as $k$ approaches $\Gamma \setminus \{0\}$ from the left and right exist, are continuous on $\Gamma \setminus \{0\}$, and are denoted by $n_+$ and $n_-$, respectively. Furthermore, they are related by
\begin{align}\label{njump}
  n_+(x,t,k) = n_-(x, t, k) v(x, t, k), \qquad k \in \Gamma \setminus \{0\}.
\end{align}

\item $n(x,t,k) = (\omega,\omega^{2},1) + O(k^{-1})$ as $k \to \infty$.

\item $n(x,t,k) = O(1)$ as $k \to 0$.
\end{enumerate}
\end{RHproblem}
The RH problem for $n$ is regular at the origin and is clearly simpler than the RH problem for $M$. Moreover, the solution $\{u, v\}$ of (\ref{boussinesqsystem}) can be recovered from $n$ via the relations
\begin{align}\label{recoveruvn}
\begin{cases}
u(x,t) = -\frac{3}{2}\frac{\partial}{\partial x}\lim_{k\to \infty}k(n_{3}(x,t,k) - 1),
	\\
v(x,t) = -\frac{3}{2}\frac{\partial}{\partial t}\lim_{k\to \infty}k(n_{3}(x,t,k) - 1).
\end{cases}
\end{align}
The following corollary is a consequence of Theorem \ref{RHth}.

\begin{corollary}[Solution of (\ref{boussinesqsystem}) in terms of $n$]\label{ncor}
Suppose the assumptions of Theorem \ref{RHth} hold. Let $U$ be an open subset of $\R \times [0,T]$ and suppose that the solution of RH problem \ref{RHn} for $n$ is unique for each $(x,t) \in U$ whenever it exists. Then the unique solution $n(x,t,k)$ of RH problem \ref{RHn} exists for each $(x,t) \in U$ and the formulas (\ref{recoveruvn}) are valid for all $(x,t) \in U$.
\end{corollary}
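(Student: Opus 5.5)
The corollary follows from Theorem \ref{RHth}: we produce a solution of RH problem \ref{RHn} from $M$, and then use the assumed uniqueness to identify it with the solution in question. Fix $(x,t)\in U$. Since $U\subset \R\times[0,T]$ and Theorem \ref{RHth} is stated for $(x,t)\in\R_+\times[0,T]$, I read the statement as concerning $U\cap(\R_+\times[0,T])$. By Theorem \ref{RHth}, RH problem \ref{RH problem for M} has a unique solution $M(x,t,k)$. Define $n$ by (\ref{ndef}). We then check properties (a)--(d) of RH problem \ref{RHn}.

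Properties (a) and (b) follow directly from (a) and (b) of RH problem \ref{RH problem for M}. Left multiplication by the constant row vector $(\omega,\omega^2,1)$ preserves analyticity off $\Gamma$ and the existence and continuity of boundary values. It also commutes with right multiplication by $v$, so $n_+=n_-v$ on $\Gamma\setminus\{0\}$.

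For (c), the large-$k$ expansion $M=I+M^{(1)}/k+O(k^{-2})$ gives $n=(\omega,\omega^2,1)+O(k^{-1})$.

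For (d), the small-$k$ expansions (\ref{singRHMat0}) hold in $D_1$ and $D_2$. The leading coefficients are given by (\ref{explicit Mcalpm2p})--(\ref{explicit Mcalpm1p}), and each of them is annihilated by left multiplication by $(\omega,\omega^2,1)$:
\begin{itemize}
\item the $\alpha$ term has first column proportional to $(1,1,1)^T$, and $\omega+\omega^2+1=0$;
\item the $\beta$ and $\delta$ terms likewise have columns proportional to $(1,1,1)^T$;
\item for the $\gamma$ term the first column is $(\omega^2,1,\omega)^T$, and $\omega\cdot\omega^2+\omega^2\cdot 1+1\cdot\omega=1+\omega^2+\omega=0$.
\end{itemize}
Hence $n=O(1)$ as $k\to0$ in $D_1\cup D_2$. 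The symmetry (\ref{singRHsymm}) relates the values of $M$ in the other sectors $D_j$ to those in $D_1,D_2$ via $k\mapsto\omega k$ and $k\mapsto\bar k$. Combining this symmetry with the same kind of algebraic cancellation should give boundedness in every sector. This transfer is the only step I expect to need care: one must check that the row vector $(\omega,\omega^2,1)$ behaves compatibly under $\mathcal{A}$ and $\mathcal{B}$ (up to scalar factors), or else argue sector by sector.

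Thus $n$ solves RH problem \ref{RHn} at $(x,t)$. By the uniqueness hypothesis on $U$, it is the unique solution, which gives existence. Finally,
\[
n_3=\omega M_{13}+\omega^2M_{23}+M_{33}.
\]
Since $M^{(1)}_{13}=0$ by (\ref{singRHMatinftyb}), the $1/k$ coefficient of $n_3-1$ is $\omega^2M^{(1)}_{23}+M^{(1)}_{33}$.

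I expect this to be the main obstacle: matching the $1/k$ coefficient of $n_3-1$ with that of $M_{33}-1$. This requires $M^{(1)}_{23}=0$, or at least that it does not depend on $(x,t)$. I would derive this from the symmetries (\ref{singRHsymm}) applied to $M^{(1)}$, which give the relations $M^{(1)}=\omega^{-1}\mathcal{A}M^{(1)}\mathcal{A}^{-1}$ and $M^{(1)}=\mathcal{B}\overline{M^{(1)}}\mathcal{B}$. Combining these with $M^{(1)}_{12}=M^{(1)}_{13}=0$ should force all off-diagonal entries of $M^{(1)}$ to vanish. For instance, the $\mathcal{A}$-symmetry permutes the entries cyclically, so $M^{(1)}_{23}=\omega^{\pm1}M^{(1)}_{12}=0$.

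With this, the limits $\lim_{k\to\infty} k(n_3-1)$ and $\lim_{k\to\infty} k(M_{33}-1)$ coincide. Formula (\ref{recoveruv}) from Theorem \ref{RHth} then yields (\ref{recoveruvn}) on $U$.
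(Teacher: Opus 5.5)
Your proposal is correct and follows the paper's route: set $n=(\omega,\omega^2,1)M$ with $M$ from Theorem \ref{RHth}, use the annihilation of the $k^{-2}$ and $k^{-1}$ coefficients at the origin by this row vector to get a solution of RH problem \ref{RHn}, and then invoke the assumed uniqueness on the open set $U$. Openness is what lets the $x$- and $t$-derivatives in (\ref{recoveruvn}) be taken. The two steps you flagged both go through:
\begin{itemize}
\item The identities $(\omega,\omega^2,1)\mathcal{A}=\omega(\omega,\omega^2,1)$ and $(\omega,\omega^2,1)\mathcal{B}=\overline{(\omega,\omega^2,1)}$ give $n(k)=\omega\, n(\omega k)\mathcal{A}^{-1}=\overline{n(\bar k)}\mathcal{B}$. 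This transfers boundedness from $D_1\cup D_2$ to all twelve sectors.
\item The $\mathcal{A}$-symmetry gives $M^{(1)}_{23}=\omega^{2}M^{(1)}_{12}=0$. Equivalently, $M^{(1)}=\mu_3^{(1)}$ is diagonal by Lemma \ref{Matinftylemma} and Proposition \ref{prop:first two coeff at infty}. Hence $\lim_{k\to\infty}k(n_3-1)=\lim_{k\to\infty}k(M_{33}-1)$.
\end{itemize}
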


\begin{remark}
We have not been able to establish uniqueness of the solution of the RH problem for $n$ except in special cases (this is the reason why we have chosen to formulate Theorem \ref{RHth} in terms of $M$ rather than $n$). 
\end{remark}

\section{Spectral analysis}\nequation\label{specsec}
\subsection{Preliminaries}

The system \eqref{boussinesqsystem} is the compatibility condition of the Lax pair
\begin{equation}\label{Xlax}
\begin{cases}
\mu_{x} - [\mathcal{L},\mu] = \mathsf{U} \mu, \\
\mu_{t} - [\mathcal{Z},\mu] = \mathsf{V} \mu,
\end{cases}
\end{equation}
where $\mathcal{L} := \mbox{diag}(l_{1},l_{2},l_{3})$, $\mathcal{Z} := \mbox{diag}(z_{1},z_{2},z_{3})$,
\begin{align}
& \mathsf{U} := L-\mathcal{L}, \qquad \mathsf{V}:=Z-\mathcal{Z}, \qquad L:=P^{-1}\tilde{L}P, \qquad Z:=P^{-1}\tilde{Z}P, \label{U and V def} \\
& \tilde{L} := \begin{pmatrix}
0 & 1 & 0 \\
0 & 0 & 1 \\
k^{3}-u_{x}-v & -2u & 0
\end{pmatrix}, \qquad \tilde{Z} := \begin{pmatrix}
\frac{4}{3}u & 0 & 1 \\
k^{3} + \frac{u_{x}}{3}-v & -\frac{2}{3}u & 0 \\
\frac{u_{xx}}{3}-v_{x} & k^{3} - \frac{u_{x}}{3}-v & - \frac{2}{3}u
\end{pmatrix}, \\
& P(k) := \begin{pmatrix}
\omega & \omega^{2} & 1  \\
\omega^{2} k & \omega k & k \\
k^{2} & k^{2} & k^{2}
\end{pmatrix},
\end{align}
and $\mu(x,t,k)$ is a $3\times 3$-matrix valued eigenfunction depending on $x$ and $t$ as well as the spectral parameter $k \in \C$.
The adjoint system of \eqref{Xlax} is
\begin{equation}\label{Xlax adjoint}
\begin{cases}
\mu_{x}^{A} + [\mathcal{L},\mu^{A}] = -\mathsf{U}^{T} \mu^{A}, \\
\mu_{t}^{A} + [\mathcal{Z},\mu^{A}] = -\mathsf{V}^{T} \mu^{A},
\end{cases}
\end{equation}
in the sense that if $\mu(x,t,k)$ is a $3\times 3$ invertible matrix-valued function satisfying \eqref{Xlax}, then $\mu^{A}:=(\mu^{-1})^{T}$ satisfies \eqref{Xlax adjoint}. 

%

We write the systems (\ref{Xlax}) and \eqref{Xlax adjoint} in differential forms as
\begin{equation}\label{mulaxdiffform}
d\left(e^{-\hat{\mathcal{L}}x - \hat{\mathcal{Z}}t} \mu \right) = W, \qquad d\left(e^{\hat{\mathcal{L}}x + \hat{\mathcal{Z}}t} \mu^{A} \right) = W^{A},
\end{equation}
where $W(x,t,k)$ and $W^{A}(x,t,k)$ are the closed one-forms defined by
\begin{equation}\label{Wdef}  
W = e^{-\hat{\mathcal{L}}x - \hat{\mathcal{Z}}t}[(\mathsf{U} dx + \mathsf{V} dt) \mu], \qquad W^{A} = -e^{\hat{\mathcal{L}}x + \hat{\mathcal{Z}}t}[(\mathsf{U}^{T} dx + \mathsf{V}^{T} dt) \mu^{A}].
\end{equation}  
It is also convenient for us to introduce the following twelve open, pairwisely disjoint subsets $\{D_n\}_1^{12}$ of the plane (see Figure \ref{fig: Dn}):
\begin{align*}
D_{1} & = \{ k \in \C \,|\, \re l_{1} < \re l_{2} < \re l_{3} \mbox{ and } \re z_{2} < \re z_{1} < \re z_{3} \}, \\
D_{2} & = \{ k \in \C \,|\, \re l_{1} < \re l_{2} < \re l_{3} \mbox{ and } \re z_{2} < \re z_{3} < \re z_{1} \}, \\
D_{3} & = \{ k \in \C \,|\, \re l_{1} < \re l_{3} < \re l_{2} \mbox{ and } \re z_{3} < \re z_{2} < \re z_{1} \}, \\
D_{4} & = \{ k \in \C \,|\, \re l_{1} < \re l_{3} < \re l_{2} \mbox{ and } \re z_{3} < \re z_{1} < \re z_{2} \}, \\
D_{5} & = \{ k \in \C \,|\, \re l_{3} < \re l_{1} < \re l_{2} \mbox{ and } \re z_{1} < \re z_{3} < \re z_{2} \}, \\
D_{6} & = \{ k \in \C \,|\, \re l_{3} < \re l_{1} < \re l_{2} \mbox{ and } \re z_{1} < \re z_{2} < \re z_{3} \}, \\
D_{7} & = \{ k \in \C \,|\, \re l_{3} < \re l_{2} < \re l_{1} \mbox{ and } \re z_{2} < \re z_{1} < \re z_{3} \}, \\
D_{8} & = \{ k \in \C \,|\, \re l_{3} < \re l_{2} < \re l_{1} \mbox{ and } \re z_{2} < \re z_{3} < \re z_{1} \}, \\
D_{9} & = \{ k \in \C \,|\, \re l_{2} < \re l_{3} < \re l_{1} \mbox{ and } \re z_{3} < \re z_{2} < \re z_{1} \}, \\
D_{10} & = \{ k \in \C \,|\, \re l_{2} < \re l_{3} < \re l_{1} \mbox{ and } \re z_{3} < \re z_{1} < \re z_{2} \}, \\
D_{11} & = \{ k \in \C \,|\, \re l_{2} < \re l_{1} < \re l_{3} \mbox{ and } \re z_{1} < \re z_{3} < \re z_{2} \}, \\
D_{12} & = \{ k \in \C \,|\, \re l_{2} < \re l_{1} < \re l_{3} \mbox{ and } \re z_{1} < \re z_{2} < \re z_{3} \}.
\end{align*}
\subsection{The eigenfunctions $\mu_{1}, \mu_{2}, \mu_{3}$, and $\mu_{1}^{A}, \mu_{2}^{A}, \mu_{3}^{A}$}
We consider three contours $\{\gamma_j\}_1^3$ in the region \eqref{halflinedomain} going from $(x_j, t_j)$ to $(x,t)$ as indicated in Figure \ref{mucontours.pdf}, where $(x_1, t_1) = (0, T)$, $(x_2, t_2) = (0, 0)$, and $(x_3, t_3) = (+\infty, t)$. This choice implies the following inequalities for $(x',t')$ on the contours,
\begin{align}
\gamma_1: x' - x \leq 0,& \qquad t' - t \geq 0, \nonumber \\
\gamma_2: x' - x \leq 0,& \qquad t' - t \leq 0, \label{contourinequalities}
	\\ 
\gamma_3: x' - x \geq 0.& \nonumber
\end{align}

\begin{figure}
\begin{center}
\begin{tikzpicture}[master]
\draw[line width=0.15 mm, dashed,fill=gray!25] (0,0)--(3.5,0)--(3.5,1.2)--(0,1.2)--(0,0) -- cycle;
\draw[line width=0.2 mm,-<-=0,->-=1] (0,1.8)--(0,0)--(3.5,0);
\draw[line width=0.25 mm, white] (3.5,0)--(3.5,1.8);
\draw[fill] (1.35,0.6) circle (0.04); 
\draw[fill] (0,1.2) circle (0.04); 
\draw[line width=0.35 mm,->-=0.23,->-=0.75] (0,1.2)--(0,0.6)--(1.35,0.6);
\node at (1.85,0.6) {\small $(x,t)$};
\node at (-0.25,1.2) {\small $T$};
\node at (1.75,-0.5) {$\gamma_{1}$};
\end{tikzpicture}
 \hspace{0.6cm}
\begin{tikzpicture}[slave]
\draw[line width=0.15 mm, dashed,fill=gray!25] (0,0)--(3.5,0)--(3.5,1.2)--(0,1.2)--(0,0) -- cycle;
\draw[line width=0.2 mm,-<-=0,->-=1] (0,1.8)--(0,0)--(3.5,0);
\draw[line width=0.25 mm, white] (3.5,0)--(3.5,1.8);
\draw[fill] (1.35,0.6) circle (0.04); 
\draw[fill] (0,0) circle (0.04); 
\draw[line width=0.35 mm,->-=0.23,->-=0.75] (0,0)--(0,0.6)--(1.35,0.6);
\node at (1.85,0.6) {\small $(x,t)$};
\node at (-0.25,1.2) {\small $T$};
\node at (1.75,-0.5) {$\gamma_{2}$};
\end{tikzpicture} \hspace{0.6cm}
\begin{tikzpicture}[slave]
\draw[line width=0.15 mm, dashed,fill=gray!25] (0,0)--(3.5,0)--(3.5,1.2)--(0,1.2)--(0,0) -- cycle;
\draw[line width=0.2 mm,-<-=0,->-=1] (0,1.8)--(0,0)--(3.5,0);
\draw[line width=0.25 mm, white] (3.5,0)--(3.5,1.8);
\draw[fill] (1.35,0.6) circle (0.04); 
\draw[fill] (3.5,0.6) circle (0.04); 
\draw[line width=0.35 mm,->-=0.6] (3.5,0.6)--(1.35,0.6);
\node at (0.85,0.6) {\small $(x,t)$};
\node at (-0.25,1.2) {\small $T$};
\node at (1.75,-0.5) {$\gamma_{3}$};
\end{tikzpicture}
\begin{figuretext}\label{mucontours.pdf}
       The contours $\gamma_1$, $\gamma_2$, and $\gamma_3$ in the $(x, t)$-plane.
   \end{figuretext}
\end{center}
\end{figure}

We define three eigenfunctions $\{\mu_j\}_1^3$ of (\ref{Xlax}) and three eigenfunctions $\{\mu^{A}_j\}_1^3$ of (\ref{Xlax adjoint}) as the solutions to the following Volterra integral equations:
\begin{align}
& \mu_j(x,t,k) = I +  \int_{\gamma_j} e^{\hat{\mathcal{L}}(k)x + \hat{\mathcal{Z}}(k)t} W_j(x',t',k), & & j = 1, 2,3, \label{mujdef} \\
& \mu^{A}_j(x,t,k) = I +  \int_{\gamma_j} e^{-\hat{\mathcal{L}}(k)x - \hat{\mathcal{Z}}(k)t} W_j^{A}(x',t',k), & & j = 1, 2,3, \label{XAjdef}
\end{align}
where $W_j$ is given by the first right-hand side in (\ref{Wdef}) with $\mu$ replaced with $\mu_j$, and similarly $W_j^{A}$ is given by the second right-hand side in \eqref{Wdef} with $\mu^{A}$ replaced with $\mu^{A}_j$. 


The third column of the matrix equation (\ref{mujdef}) involves the exponentials
$$e^{(l_1 - l_3)(x - x') + (z_1 - z_3)(t - t')}, \qquad e^{(l_2 - l_3)(x - x') + (z_2 - z_3)(t - t')}.$$ 
Using the inequalities in (\ref{contourinequalities}) it follows that these exponentials are bounded in the following regions of the complex $k$-plane:
\begin{align*}
\gamma_1: \{\re  l_1 \leq \re  l_3\} \cap \{\re  l_2 \leq \re  l_3\} &\cap \{\re  z_3 \leq \re  z_1\} \cap \{\re  z_3 \leq \re  z_2\}, \\
\gamma_2: \{\re  l_1 \leq \re  l_3\} \cap \{\re  l_2 \leq \re  l_3\} &\cap \{\re  z_1 \leq \re  z_3\} \cap \{\re  z_2 \leq \re  z_3\}, \\
\gamma_3: \{\re  l_3 \leq \re  l_1\} \cap \{\re  l_3 \leq \re  l_2\}&.
\end{align*}
These boundedness properties, combined with a standard analysis of the Volterra integral equations (\ref{mujdef}), imply that the third column vectors of $\mu_2$ and $\mu_3$ are bounded (as functions of $x\in \R_{+}$ and $t\in [0,T]$) for each fixed $k \in \mathcal{T}$ and $k \in \mathcal{S}$, respectively,  where the sets $\mathcal{T}$ and $\mathcal{S}$ are defined by (see Figure \ref{fig: Dn})
\begin{align*}
& \mathcal{T} = \{ k \in \C \, | \, \arg k \in (-\tfrac{\pi}{6},\tfrac{\pi}{6})\} = \mbox{int} (\bar{D}_{1}\cup\bar{D}_{12}), \\
& \mathcal{S} = \{k \in \C \, | \, \arg k \in (\tfrac{2\pi}{3},\pi]\cup (-\pi,-\tfrac{2\pi}{3})\} = \mbox{int} (\bar{D}_{5}\cup\bar{D}_{6}\cup\bar{D}_{7}\cup\bar{D}_{8}).
\end{align*}
The analogous region of boundedness for the third column of $\mu_1$ is in general empty.
Similar conditions are valid for the other column vectors. 

Let $\mathcal{R} = \mbox{int}(\bar{D}_3 \cup \bar{D}_4)$ and, for a set $U \subset \C$, $\check{U} := U \cup (-U)$.

 We summarize the properties of $\mu_{1},\mu_{2},\mu_{3}$ is the following proposition.

\begin{proposition}[Basic properties of $\mu_{1},\mu_{2},\mu_{3}$]\label{XYprop}
Suppose $u_0, v_0 \in \mathcal{S}(\R_{+})$ and $\tilde{u}_{0}, \tilde{u}_{1}, \tilde{u}_{2}, \tilde{v}_0 \in C^{\infty}([0,T])$. 
Then the equations (\ref{mujdef}) uniquely define three $3 \times 3$-matrix valued solutions $\mu_{1},\mu_{2},\mu_{3}$ of the $x$-part of (\ref{Xlax}) with the following properties:
\begin{enumerate}[$(a)$]
\item The functions $\mu_{j}$ have the following domains of definition:
\begin{subequations}\label{lol4main}
\begin{align}
& \text{$\mu_1(x,t,k)$ and $\mu_2(x,t,k)$ are defined for $x\in \R_{+}$, $t \in [0,T]$, $k\in \C\setminus\{0\}$,} \\
& \mu_3(x,t,k)\; \text{is defined for} \; x\in \R_{+}, \; t \in [0,T], \; k \in (\omega^{2} \overline{\mathcal{S}}, \omega \overline{\mathcal{S}}, \overline{\mathcal{S}})\setminus\{0\},
\end{align}
\end{subequations}
where the notation $k \in (\omega^{2} \overline{\mathcal{S}}, \omega \overline{\mathcal{S}}, \overline{\mathcal{S}})\setminus\{0\}$ indicates that the first, second, and third columns of $\mu_3(x,t,k)$ are well-defined for $k$ in the sets $\omega^2\overline{\mathcal{S}}\setminus\{0\}$, $\omega \overline{\mathcal{S}}\setminus\{0\}$, and $\overline{\mathcal{S}}\setminus\{0\}$, respectively.

\smallskip \noindent For each $k$ as in \eqref{lol4main}, $\mu_{j}(\cdot,\cdot, k)$ is smooth and satisfies the $x$-part of (\ref{Xlax}).

\item \vspace{0.1cm} For each $x \in \R_{+}$ and $t\in [0,T]$, 
\begin{subequations}
\begin{align*}
& \text{$\mu_1(x,t,k)$ and $\mu_2(x,t,k)$ are analytic for $k \in \C\setminus\{0\}$}, \\
& \mu_3(x,t,k) \text{ is continuous for } k \in (\omega^{2} \overline{\mathcal{S}}, \omega \overline{\mathcal{S}}, \overline{\mathcal{S}})\setminus\{0\} \mbox{ and analytic for } k \in (\omega^{2} \mathcal{S}, \omega \mathcal{S}, \mathcal{S})\setminus\{0\}.
\end{align*}
\end{subequations}
\item For each $x \in \R_{+}$, $t\in [0,T]$, and $j = 0,1,\dots$,
\begin{subequations}
\begin{align*}
& \tfrac{\partial^j }{\partial k^j}\mu_3(x,t,\cdot) \text{ has a continuous extension to } k \in (\omega^{2} \overline{\mathcal{S}}, \omega \overline{\mathcal{S}}, \overline{\mathcal{S}})\setminus\{0\}.
\end{align*}
\end{subequations}

\item For each $n \geq 1$ and $\epsilon > 0$, there are bounded smooth positive functions $\{f_{j}(x,t)\}_{j=2}^{3}$ and $\{g_{j}(t)\}_{j=1}^{2}$ of $x \in \R_{+}$ and $t\in [0,T]$, such that the following estimates hold for $x \in \R_{+}$ and $t\in [0,T]$ and $ j = 0, 1, \dots, n$:
\begin{subequations}\label{region of boundedness of muj}
\begin{align}
& \big|\tfrac{\partial^j}{\partial k^j}\mu_{2}(x,t,k) \big| \leq
f_{2}(x,t), & & k \in (\omega^{2} \overline{\mathcal{T}}, \omega \overline{\mathcal{T}}, \overline{\mathcal{T}}), \ |k| > \epsilon,  \\
& \big|\tfrac{\partial^j}{\partial k^j}\big(\mu_{3}(x,t,k) - I\big) \big| \leq
f_3(x,t), & & k \in (\omega^{2} \overline{\mathcal{S}}, \omega \overline{\mathcal{S}}, \overline{\mathcal{S}}), \ |k| > \epsilon, \\
& \big|\tfrac{\partial^j}{\partial k^j}\mu_{1}(0,t,k) \big| \leq
g_1(t), & & k \in (\omega^{2}\overline{\check{\mathcal{R}}}, \omega\overline{\check{\mathcal{R}}}, \overline{\check{\mathcal{R}}} ), \ |k| > \epsilon,  	\\ 
& \big|\tfrac{\partial^j}{\partial k^j}\mu_{2}(0,t,k) \big| \leq
g_{2}(t), & & k \in (\omega^{2}\overline{\check{\mathcal{T}}}, \omega \overline{\check{\mathcal{T}}}, 
\overline{\check{\mathcal{T}}}), \ |k| > \epsilon,  
\end{align}
\end{subequations}
Moreover, $f_{3}(x,t)$ has rapid decay as $x \to +\infty$.

\item $\mu_{1},\mu_{2},\mu_{3}$ obey the following symmetries for each $x \in \R_{+}$ and $t\in [0,T]$:
\begin{align*}
&  \mu_{j}(x,t, k) = \mathcal{A} \mu_{j}(x,t,\omega k)\mathcal{A}^{-1} = \mathcal{B} \overline{\mu_{j}(x,t,\overline{k})}\mathcal{B} \quad \text{for} \; k \in \C \setminus\{0\}, \; j=1,2, \\
&  \mu_{3}(x,t, k) = \mathcal{A} \mu_{3}(x,t,\omega k)\mathcal{A}^{-1} = \mathcal{B} \overline{\mu_{3}(x,t,\overline{k})}\mathcal{B}\quad \text{for} \;  k \in (\omega^{2} \overline{\mathcal{S}}, \omega \overline{\mathcal{S}}, \overline{\mathcal{S}})\setminus\{0\}.
\end{align*}

\item For each  $x \in \R_{+}$, $t\in [0,T]$, and $j=1,2$, $\det \mu_{j}(x,t,k)=1$.

\smallskip \noindent If $u_0, v_0$ have compact support, then, for each  $x \in \R_{+}$ and $t\in [0,T]$,  $\mu_{3}(x,t, k)$ is defined and analytic for $k \in \C \setminus \{0\}$ and $\det \mu_{3}(x,t,k)=1$.

\end{enumerate}
\end{proposition}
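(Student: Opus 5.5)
The plan is to treat the three Volterra equations (\ref{mujdef}) by the standard Neumann-series method, column by column, following the $3\times 3$ framework of \cite{L3x3, L2013} and the whole-line analysis in \cite{CLgoodboussinesq}. Since the one-form $W$ is closed, (\ref{mujdef}) does not depend on the path inside the domain, so we may take $\gamma_j$ to be the paths of Figure \ref{mucontours.pdf}. For $\mu_1$ and $\mu_2$ the $t$-segment is compact ($[0,T]$) and the $x$-segment is $[0,x]$, also compact. All exponentials $e^{(l_i-l_j)(x-x')+(z_i-z_j)(t-t')}$ are therefore entire in $k$ and bounded on compact $k$-sets. The only singularity comes from $P(k)^{-1}$, which has a pole at $k=0$, so $\mathsf U,\mathsf V$ are analytic on $\C\setminus\{0\}$. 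Iterating the Volterra operator gives a series whose $n$-th term is bounded by $C^n(x+T)^n/n!$ locally uniformly in $k\neq 0$. This yields existence, uniqueness, smoothness in $(x,t)$, and analyticity in $k\in\C\setminus\{0\}$, which is (a) and (b) for $j=1,2$.

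For $\mu_3$, only the $x$-integral over $[x,\infty)$ enters. For the $j$th column, the relevant exponentials $e^{(l_i-l_j)(x-x')}$ with $x'\ge x$ are bounded exactly when $\re l_j\le \re l_i$ for all $i$. This is the region $\overline{\mathcal S}$ for the third column, and by the $\omega$-rotation it is $\omega\overline{\mathcal S}$ and $\omega^2\overline{\mathcal S}$ for the others. The Neumann series then converges, with the $n$-th term bounded by $\big(\int_x^\infty|\mathsf U|dx'\big)^n/n!$, since $u,v$ and their derivatives are Schwartz and $|\mathsf U|\le C(|k|^{-1}+|k|^{-2})(|u|+|u_x|+|v|)$. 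This gives continuity on the closed sectors minus $0$ and analyticity in the interiors. It also gives the $f_3$ bound with rapid decay, because $\int_x^\infty(1+x')^N|\mathsf U|dx'$ decays rapidly.

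For the $k$-derivatives (c), we differentiate under the integral. Each derivative brings down a factor $(x'-x)$, which is absorbed by the Schwartz decay of $\mathsf U$. For the estimates (d), we use that for $|k|>\epsilon$ the coefficient $\mathsf U$ is bounded uniformly in $k$, while $\mathsf V$ grows at most like $|k|$ times smooth functions of $t$. The sector conditions ensure the exponentials are bounded by $1$. On the boundary $x=0$, $\mu_1(0,t,k)$ solves only the $t$-equation on $[t,T]$ and $\mu_2(0,t,k)$ on $[0,t]$. Their columns are bounded where $\re z_j$ is extremal in the appropriate direction. Since $z_j$ depends on $k^2$, these regions are invariant under $k\mapsto -k$, which explains the $\check{\mathcal R},\check{\mathcal T}$ sets. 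The $f_2$ estimate for general $x$ combines the two segment conditions, giving $\mathcal T$. Uniformity as $|k|\to\infty$ requires more care: since $\mathsf V=O(k)$, the naive Gronwall bound $e^{C|k|T}$ is not uniform. To handle this, we use the standard bootstrap from \cite{L3x3}, \cite[Section 2]{CLgoodboussinesq}: integrate by parts once, exploiting the oscillation and decay of the exponentials in the off-diagonal entries, or equivalently gauge the leading $O(k)$ part away with an explicit transformation. We expect this large-$k$ uniform bound for the $t$-equation to be the main technical obstacle.

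The symmetries (e) follow from uniqueness of the Volterra solutions together with the algebraic identities $\mathsf U(k)=\mathcal A\mathsf U(\omega k)\mathcal A^{-1}=\mathcal B\overline{\mathsf U(\bar k)}\mathcal B$ and likewise for $\mathsf V$, $\mathcal L$, $\mathcal Z$. These identities are checked directly from $P(\omega k)=P(k)\mathcal A^{-1}$ up to the diagonal structure, and from $\overline{P(\bar k)}=P(k)\mathcal B$. For (f), $\operatorname{tr}\mathsf U=\operatorname{tr}\mathsf V=0$ because the middle matrices are nilpotent or traceless. Hence $\det\mu$ is constant along the path and equals $1$ at the starting point, where $\mu=I$; this argument needs all columns defined simultaneously. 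For compactly supported $u_0,v_0$, the equation for $\mu_3$ at $t=0$ involves only a compact $x$-interval, and for $t>0$ the path can be chosen compactly through the closedness of $W$. So $\mu_3$ is entire in $k\neq0$ with determinant $1$ by the same argument.
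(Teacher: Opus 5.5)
Your overall scheme (column-by-column Neumann series along the paths of Figure~\ref{mucontours.pdf}, symmetries from uniqueness, $\det\mu_j=1$ from $\operatorname{tr}\mathsf U=\operatorname{tr}\mathsf V=0$) is the standard Volterra analysis that the paper invokes without writing it out. It does give (a)--(c), (e), (f) and the $\mu_3$ estimate in (d). The trouble is the uniform estimates in (d) that involve the $t$-segments.

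First, your diagnosis there is wrong: $\mathsf V$ is not $O(k)$. Since $P(k)=\diag(1,k,k^2)P(1)$, $\mathsf V$ is obtained by conjugating the $k$-independent lower-triangular matrix $\mathcal N$ in its definition, first by $\diag(1,k,k^2)$ and then by the constant matrix $P(1)$. The $(i,j)$ entry of $\mathcal N$ therefore picks up a factor $k^{j-i}$ with $j\le i$. Hence $\mathsf V=\mathsf V^{(0)}+\mathsf V^{(1)}k^{-1}+\mathsf V^{(2)}k^{-2}$, as the paper records before Proposition~\ref{prop:first two coeff at infty}. For $j=0$ the Neumann series on $[0,T]$ is then bounded by $e^{C_\epsilon T}$, uniformly in $|k|>\epsilon$, as soon as the exponentials are bounded by one. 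The obstacle you single out does not exist, and no integration by parts or gauge transformation is needed for it.

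Second, and this is the genuine gap, the real large-$k$ difficulty is in the derivatives $j\ge 1$, which your argument does not handle. Your treatment of (c) covers only $x$-integrals. Even there, for $\mu_2$ on $[0,x]$ the factor $x-x'$ is not absorbed by the decay of $\mathsf U(x')$; you must instead use that $\re(l_3-l_i)\ge\frac{\sqrt3}{2}|k|$ on $\overline{\mathcal T}$ for $i=1,2$. On the vertical segments, $\partial_k e^{(z_i-z_j)(t-t')}=2k(\omega^{2i}-\omega^{2j})(t-t')e^{(z_i-z_j)(t-t')}$, and on the edges of the closed sectors this exponential has modulus one. For example, $\re z_1=\re z_3$ on the ray $\arg k=\pi/6$ bounding $\mathcal T$, and $\re z_2=\re z_3$ on the ray $\arg k=\pi/3$ bounding $\mathcal R$. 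Differentiating the Neumann series and taking absolute values therefore yields only bounds of order $|k|^j$ for $\partial_k^j\mu_1(0,t,k)$ and $\partial_k^j\mu_2(0,t,k)$, not the $k$-uniform bounds $g_1,g_2,f_2$.

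What is missing is an argument that exploits the oscillation in $t'$. One way:
\begin{enumerate}
\item[1.] Prove $\mu=I+\sum_{l=1}^p\mu^{(l)}k^{-l}+O(k^{-p-1})$ uniformly on the closed sectors. Use an approximate solution built from the large-$k$ recursion (\ref{xrecursive}) and its $t$-analogue, plus a Volterra bound on the remainder; that bound is uniform precisely because $\mathsf V=O(1)$.
\item[2.] Differentiate the $t$-equation:
$(\partial_k\mu)_t-[\mathcal Z,\partial_k\mu]=\mathsf V\partial_k\mu+(\partial_k\mathsf V)\mu+[\partial_k\mathcal Z,\mu]$.
The forcing is bounded, because $[\partial_k\mathcal Z,\mu]$ involves only the off-diagonal part of $\mu$, which is $O(k^{-1})$ by step 1.
\item[3.] Handle higher derivatives inductively in the same way, using the differentiated expansion.
\end{enumerate}
So the large-$k$ machinery you allude to is needed, but for the $k$-derivatives, not because of the size of $\mathsf V$.
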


We now turn to the properties of $\mu_{1}^{A},\mu_{2}^{A},\mu_{3}^{A}$.
\begin{proposition}[Basic properties of $\mu_{1}^{A},\mu_{2}^{A},\mu_{3}^{A}$]\label{XAYAprop}
Suppose $u_0, v_0 \in \mathcal{S}(\R_{+})$ and $\tilde{u}_{0}, \tilde{u}_{1}, \tilde{u}_{2}, \tilde{v}_0 \in C^{\infty}([0,T])$. 
Then the equations (\ref{XAjdef}) uniquely define three $3 \times 3$-matrix valued solutions $\mu_{1}^{A},\mu_{2}^{A},\mu_{3}^{A}$ of the $x$-part of (\ref{Xlax adjoint}) with the following properties:
\begin{enumerate}[$(a)$]
\item The functions $\mu_{j}^{A}$ have the following domains of definition:
\begin{subequations}\label{lol4main adjoint}
\begin{align}\label{lol4 adjoint}
& \text{$\mu_1^A(x,t,k)$ and $\mu_2^A(x,t,k)$ are defined for $x\in \R_{+}$, $t \in [0,T]$, $k\in \C\setminus\{0\}$,} 
	 \\
& \mu_3^{A}(x,t,k)\; \text{is defined for} \; x\in \R_{+}, \; t \in [0,T], \; k \in (-\omega^{2} \overline{\mathcal{S}}, -\omega \overline{\mathcal{S}}, -\overline{\mathcal{S}})\setminus\{0\}.
\end{align}
\end{subequations}

\smallskip \noindent For each $k$ as in \eqref{lol4main adjoint}, $\mu_{j}^{A}(\cdot,\cdot, k)$ is smooth and satisfies the $x$-part of (\ref{Xlax adjoint}).

\item \vspace{0.1cm} For each $x \in \R_{+}$ and $t\in [0,T]$, 
\begin{subequations}
\begin{align*}
& \text{$\mu_1^{A}(x,t,k)$ and $\mu_2^{A}(x,t,k)$ are analytic for $k \in \C\setminus\{0\}$}, 
	\\
& \mu_3^{A}(x,t,k) \text{ is continuous for } k \in (-\omega^{2} \overline{\mathcal{S}}, -\omega \overline{\mathcal{S}}, -\overline{\mathcal{S}})\setminus\{0\} \\
& \hspace{1.69cm} \mbox{ and analytic for } k \in (-\omega^{2} \mathcal{S}, -\omega \mathcal{S}, -\mathcal{S})\setminus\{0\}.
\end{align*}
\end{subequations}
\item For each $x \in \R_{+}$ and $t\in [0,T]$,
\begin{subequations}
\begin{align*}
& \tfrac{\partial^j }{\partial k^j}\mu_3^{A}(x,t,\cdot) \text{ has a continuous extension to } k \in (-\omega^{2} \overline{\mathcal{S}}, -\omega \overline{\mathcal{S}}, -\overline{\mathcal{S}})\setminus\{0\}.
\end{align*}
\end{subequations}

\item For each $n \geq 1$ and $\epsilon > 0$, there are bounded smooth positive functions $\{\tilde{f}_{j}(x,t)\}_{j=1,3}$ and $\{\tilde{g}_{j}(t)\}_{j=1}^{2}$ of $x \in \R_{+}$ and $t\in [0,T]$, such that the following estimates hold for $x \in \R_{+}$ and $t\in [0,T]$ and $ j = 0, 1, \dots, n$:
\begin{subequations}\label{region of boundedness of mujA}
\begin{align}
& \big|\tfrac{\partial^j}{\partial k^j}\mu_{1}^{A}(x,t,k) \big| \leq
\tilde{f}_{1}(x,t), & & k \in (-\omega^{2} \overline{\mathcal{T}}, -\omega \overline{\mathcal{T}}, -\overline{\mathcal{T}}), \ |k| > \epsilon,  \\
& \big|\tfrac{\partial^j}{\partial k^j}\big(\mu_{3}^{A} (x,t,k) - I\big) \big| \leq
\tilde{f}_3(x,t), & & k \in (-\omega^{2} \overline{\mathcal{S}}, -\omega \overline{\mathcal{S}}, -\overline{\mathcal{S}}), \ |k| > \epsilon, \\
& \big|\tfrac{\partial^j}{\partial k^j}\mu_{1}^{A} (0,t,k) \big| \leq
\tilde{g}_1(t), & & k \in (\omega^{2}\overline{\check{\mathcal{T}}}, \omega \overline{\check{\mathcal{T}}}, 
\overline{\check{\mathcal{T}}}), \ |k| > \epsilon,  	\\ 
& \big|\tfrac{\partial^j}{\partial k^j}\mu_{2}^{A}(0,t,k) \big| \leq
\tilde{g}_{2}(t), & & k \in (\omega^{2}\overline{\check{\mathcal{R}}}, \omega\overline{\check{\mathcal{R}}}, \overline{\check{\mathcal{R}}} ), \ |k| > \epsilon,  
\end{align}
\end{subequations}
Moreover, $\tilde{f}_{3}(x,t)$ has rapid decay as $x \to +\infty$. 

\item $\mu_{1}^{A},\mu_{2}^{A},\mu_{3}^{A}$ obey the following symmetries for each $x \in \R_{+}$ and $t\in [0,T]$:
\begin{subequations}
\begin{align*}
&  \mu_{j}^{A}(x,t, k) = \mathcal{A} \mu_{j}^{A}(x,t,\omega k)\mathcal{A}^{-1} = \mathcal{B} \overline{\mu_{j}^{A}(x,t,\overline{k})}\mathcal{B}, \qquad k \in \C \setminus\{0\}, \; j=1,2, \\
&  \mu_{3}^{A}(x,t, k) = \mathcal{A} \mu_{3}^{A}(x,t,\omega k)\mathcal{A}^{-1} = \mathcal{B} \overline{\mu_{3}^{A}(x,t,\overline{k})}\mathcal{B}, \qquad k \in (-\omega^{2} \overline{\mathcal{S}}, -\omega \overline{\mathcal{S}}, -\overline{\mathcal{S}})\setminus\{0\}.
\end{align*}
\end{subequations}

\item $\mu_{1}^A = (\mu_1^{-1})^T$ and $\mu_{2}^A = (\mu_2^{-1})^T$ for $k \in \C \setminus \{0\}$.
If $u_0, v_0$ have compact support, then $\mu_{3}^A = (\mu_3^{-1})^T$ for $k \in \C \setminus \{0\}$.

\end{enumerate}
\end{proposition}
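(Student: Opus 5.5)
The proof of Proposition \ref{XAYAprop} will be reduced almost entirely to Proposition \ref{XYprop}. The adjoint Lax pair \eqref{Xlax adjoint} has the same structure as \eqref{Xlax}: $\mathcal{L},\mathcal{Z}$ are replaced by $-\mathcal{L},-\mathcal{Z}$ and $\mathsf{U},\mathsf{V}$ by $-\mathsf{U}^T,-\mathsf{V}^T$. Since $-l_j(k)=l_j(-k)$ and $z_j(-k)=z_j(k)$, the $x$-exponentials in \eqref{XAjdef} are those of \eqref{mujdef} evaluated at $-k$, while the $t$-exponentials change sign.

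\emph{Existence and domains, parts (a)--(d).} I will write the Volterra equations \eqref{XAjdef} column by column. The $(i,j)$ kernel of column $j$ involves $e^{-(l_i-l_j)(x-x')-(z_i-z_j)(t-t')}$. Combined with the inequalities \eqref{contourinequalities}, this gives boundedness regions obtained from those for $\mu_j$ via $k\mapsto -k$ in the $x$-direction, with the opposite ordering of $\re z_i$ in the $t$-direction. For $\gamma_3$ only $x$ matters, so the third column of $\mu_3^A$ is bounded on $-\overline{\mathcal{S}}$, and by the $\mathcal{A}$-symmetry the other columns are bounded on $-\omega^2\overline{\mathcal{S}}$ and $-\omega\overline{\mathcal{S}}$. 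On $\gamma_1$ and $\gamma_2$ the reversal in $t$ interchanges the roles of $\mathcal{T}$ and $\mathcal{R}$ found for $\mu_1,\mu_2$. This explains why $\mu_1^A$ gets the $\mathcal{T}$-type estimates and $\mu_2^A(0,t,k)$ the $\check{\mathcal{R}}$ estimates. Along the segment $x'=0$, only $t$-exponentials appear, and these are even in $k$, which accounts for the checked sets.

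The standard Neumann series argument then runs as in Proposition \ref{XYprop}. The Schwartz decay of $u,v$ handles $\gamma_3$, and the boundedness of the coefficients on the compact segments handles $\gamma_1,\gamma_2$. The singularity at $k=0$ is controlled by imposing $|k|>\epsilon$, since $P(k)^{-1}$ has entries of order $k^{-2}$. This gives existence, uniqueness, smoothness in $(x,t)$, analyticity in the interiors, and continuity up to the boundary. Differentiating under the integral gives the $\partial_k^j$ bounds; the extra polynomial factors $x'^j$ are absorbed by the rapid decay, which also yields the rapid decay of $\tilde f_3$. For $\gamma_1,\gamma_2$, which have bounded length, the entire analyticity on $\C\setminus\{0\}$ follows because the contours are compact.

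\emph{Symmetries, part (e).} I will first verify that $\mathsf{U}(x,t,k)=\mathcal{A}\mathsf{U}(x,t,\omega k)\mathcal{A}^{-1}$ and $\mathcal{L}(k)=\mathcal{A}\mathcal{L}(\omega k)\mathcal{A}^{-1}$, and likewise for $\mathsf{V}$ and $\mathcal{Z}$. These identities should come from $P(\omega k)=P(k)\,\cdot$(permutation/diagonal), and similarly with $\mathcal{B}$ and complex conjugation, using that $u,v$ are real. Since $\mathcal{A}$ is orthogonal, so that $\mathcal{A}^{-1}=\mathcal{A}^T$, transposition preserves these relations for $\mathsf{U}^T,\mathsf{V}^T$. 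Both sides of each claimed identity then solve the same Volterra equation, so uniqueness gives (e).

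\emph{Part (f).} I will show that $(\mu_j^{-1})^T$ solves \eqref{Xlax adjoint}, which follows from the general adjoint remark. Because $\det\mu_j=1$ by Proposition \ref{XYprop}(f), the inverse exists for $j=1,2$. At the initial point $(x_j,t_j)$ of $\gamma_j$ both $\mu_j^A$ and $(\mu_j^{-1})^T$ equal $I$, so integrating the exact differential \eqref{mulaxdiffform} along $\gamma_j$ shows that $(\mu_j^{-1})^T$ satisfies \eqref{XAjdef}. Uniqueness then gives $\mu_j^A=(\mu_j^{-1})^T$. For compact support, $\mu_3$ is entire in $k\ne0$ with $\det\mu_3=1$, and the same argument applies.

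The main obstacle I expect is the careful bookkeeping of the boundedness regions in part (d), especially the $\check{\ }$ sets at $x=0$ and their correct assignment between $\mathcal{T}$ and $\mathcal{R}$ under the reversed $t$-exponentials. I will treat this by explicitly tabulating the signs of $\re(z_i-z_j)$ on each sector $D_n$.
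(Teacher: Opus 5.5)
Your treatment of (a)--(c), (e) and (f) is the standard argument, which the paper leaves implicit since it states this proposition without proof. The column-wise Volterra analysis, with the $x$-exponentials taken at $-k$ and the $t$-exponentials reversed, gives exactly the stated sets. The symmetries follow from $P(\omega k)=\omega^2P(k)\mathcal{A}$, $\overline{P(\bar k)}=P(k)\mathcal{B}$ and uniqueness. The identity $\mu_j^A=(\mu_j^{-1})^T$ follows by uniqueness once you integrate the $t$-part along the vertical leg and the $x$-part along the horizontal leg of $\gamma_j$.

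The gap is in the $k$-derivative bounds of (d) for the $t$-dependent objects $\mu_1^A(x,t,k)$, $\mu_1^A(0,t,k)$ and $\mu_2^A(0,t,k)$. Differentiating under the integral is harmless on $\gamma_3$. There $l_j$ is linear in $k$, so $\partial_k$ only brings down $(\omega^i-\omega^j)(x-x')$, which the decay of $u,v$ absorbs. On the vertical legs, however,
$\partial_k e^{(z_i-z_j)(t'-t)}=2(\omega^{2i}-\omega^{2j})k(t'-t)\,e^{(z_i-z_j)(t'-t)}$ brings down a factor of $k$, not of $x'$. On the boundary rays of $\check{\mathcal{T}}$, $\check{\mathcal{R}}$ and $-\mathcal{T}$ the phase is purely oscillatory; for example $\re(z_1-z_3)=0$ on $\arg k=\pi/6$. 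So your argument only gives bounds of order $|k|^j$ as $|k|\to\infty$ along these rays.

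This cannot be repaired for all $n$. Integrating by parts in $t'$ shows that $(\mu_1^A)_{13}(0,t,k)$ contains the endpoint contribution $e^{(z_1-z_3)(T-t)}\mathsf{V}_{31}(0,T,k)/(z_1-z_3)$, where $\mathsf{V}_{31}(0,T,k)=\frac{2\omega}{3}\tilde u_0(T)+O(k^{-1})$. Higher Born terms only change it at relative order $k^{-1}$, because the diagonal of $\mathsf{V}^{(0)}$ vanishes. Its third $k$-derivative grows like $|k|(T-t)^3$ along $\arg k=\pi/6$.

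Hence, when $\tilde u_0(T)\neq 0$ and $t<T$, no bound $\tilde g_1(t)$ uniform in $|k|>\epsilon$ can hold for $j\ge 3$. The same happens for $\mu_2^A(0,t,k)$ through the endpoint $t'=0$ when $u_0(0)\neq0$. It also happens for $\mu_1^A(x,t,k)$ at $x=0$, on the rays $\arg k=5\pi/6,7\pi/6$. Your method does deliver (d) for $k$ in bounded sets $\epsilon<|k|\le R$. Uniform bounds for small $j$ would need an additional integration-by-parts or large-$k$ remainder argument, which you have not supplied.

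Separately, consider the $x$-uniform bound for $\mu_1^A(x,t,k)$. The factors $x^m$ coming from the vertical leg of $\gamma_1$, which sits at $x'=0$, are not absorbed by the decay of $u,v$. They are instead controlled by $\re(l_i-l_3)\ge\frac{\sqrt3}{2}|k|$, $i=1,2$, on $-\overline{\mathcal{T}}$, and by its rotations for the other columns.
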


A similar analysis as in \cite[Proposition 3.2]{CLgoodboussinesq} shows that, as $k\to \infty$ within the region of boundedness,
\begin{align}\label{mujlargekexpansion}
\mu_{j}(x,t,k) = I + \frac{\mu_{j}^{(1)}(x,t)}{k} + \frac{\mu_{j}^{(2)}(x,t)}{k^2} + \cdots.
\end{align}
for some matrices $\{\mu_{j}^{(l)}(x,t)\}_{l=1}^{+\infty}$, $j=1,2,3$. 
Let us write
$$ \mathsf{U} = \frac{ \mathsf{U}^{(1)}}{k} + \frac{ \mathsf{U}^{(2)}}{k^2}, \qquad \mathsf{V} = \mathsf{V}^{(0)}+ \frac{ \mathsf{V}^{(1)}}{k} + \frac{ \mathsf{V}^{(2)}}{k^2},$$
where $\mathsf{U}^{(1)}, \mathsf{U}^{(2)}, \mathsf{V}^{(0)}, \mathsf{V}^{(1)}, \mathsf{V}^{(2)}$ are independent of $k$.
Substituting (\ref{mujlargekexpansion}) into (\ref{Xlax}) and identifying terms of the same order yields the relations
\begin{align}\label{xrecursive}
\begin{cases}
[J, \mu_{j}^{(l)}] = \big(\partial_x \mu_{j}^{(l-1)} - \mathsf{U}^{(1)}\mu_{j}^{(l-2)} - \mathsf{U}^{(2)}\mu_{j}^{(l-3)}\big)^{(o)},
	\\
(\partial_x \mu_{j}^{(l)})^{(d)} = \big(\mathsf{U}^{(1)}\mu_{j}^{(l-1)} + \mathsf{U}^{(2)}\mu_{j}^{(l-2)} \big)^{(d)}, \\
(\partial_t \mu_{j}^{(l)})^{(d)} = \big(\mathsf{V}^{(0)}\mu_{j}^{(l)} + \mathsf{V}^{(1)}\mu_{j}^{(l-1)} + \mathsf{V}^{(2)}\mu_{j}^{(l-2)} \big)^{(d)},
\end{cases}
\end{align}
where $J=\mathrm{diag}(\omega,\omega^{2},1)$, and where $A^{(d)}$ and $A^{(o)}$ denote the diagonal and off-diagonal parts of a $3 \times 3$ matrix $A$, respectively.
The coefficients $\{\mu_j^{(l)}(x,t)\}_{j=1,2,3}^{l\geq 1}$ are uniquely determined recursively from (\ref{xrecursive}), the normalizations $\mu_j^{(l)}(x_{j},t_{j}) = 0$, and the initial assignments
\begin{align*}
\mu_{j}^{(-2)} = 0, \qquad \mu_{j}^{(-1)} = 0, \qquad \mu_{j}^{(0)} = I, \qquad j=1,2,3.
\end{align*}
The first two coefficients are given in the following proposition.

\begin{proposition}\label{prop:first two coeff at infty}
The coefficients $\{\mu_{j}^{(l)}\}_{j=1,2,3}^{l=1,2}$ are given by 
\begin{align*}
& \mu_{j}^{(1)}(x,t) =  \int_{(x_{j},t_{j})}^{(x,t)} \Delta^{(1)}(x',t') \begin{pmatrix} \omega^2 & 0 & 0 \\ 
0 & \omega & 0 \\ 
0 & 0 & 1
\end{pmatrix}, \\
& \mu_{j}^{(2)}(x,t) = \frac{2u(x,t)}{3(1-\omega)}\begin{pmatrix}
0 & 1 & -1 \\ - \omega & 0 & \omega \\ \omega^{2} & -\omega^{2} & 0
\end{pmatrix} + \int_{(x_{j},t_{j})}^{(x,t)} \Delta_{j}^{(2)}(x',t') \begin{pmatrix} \omega & 0 & 0 \\ 
0 & \omega^{2} & 0 \\ 
0 & 0 & 1
\end{pmatrix},
\end{align*}
where $\Delta^{(1)}(x,t) = -\frac{2}{3}\big(u(x,t)dx+v(x,t)dt\big)$ and 
\begin{align*}
\Delta_{j}^{(2)}(x,t) = - \frac{u_{x}+v+2u(\mu_{j}^{(1)})_{33}}{3}dx + \frac{4u^{2}+u_{xx}-3v_{x}-6v(\mu_{j}^{(1)})_{33}}{9}dt.
\end{align*}
\end{proposition}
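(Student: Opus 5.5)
We substitute the expansion \eqref{mujlargekexpansion} into the $x$- and $t$-parts of \eqref{Xlax} and solve the recursion \eqref{xrecursive} explicitly for $l=1,2$. The data needed are the coefficients $\mathsf{U}^{(1)},\mathsf{U}^{(2)},\mathsf{V}^{(0)},\mathsf{V}^{(1)},\mathsf{V}^{(2)}$. We obtain them by computing $P(k)^{-1}$. Since $P(k)=\mathrm{diag}(1,k,k^2)P_0$ with $P_0$ the Vandermonde-type matrix with rows $(\omega,\omega^2,1)$, $(\omega^2,\omega,1)$, $(1,1,1)$, we have $P^{-1}=P_0^{-1}\mathrm{diag}(1,k^{-1},k^{-2})$ and $P_0^{-1}=\tfrac13\overline{P_0}^{T}$. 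Conjugating the lower-triangular matrices in \eqref{mathsfUdef intro} then gives the following.
\begin{itemize}
\item $\mathsf{U}^{(1)}$ is proportional to $u$: it comes from the entry $-2u$ multiplied by $k\cdot k^{-2}$.
\item $\mathsf{U}^{(2)}$ is proportional to $u_x+v$.
\item $\mathsf{V}^{(0)}$ comes from the diagonal part of the matrix in \eqref{mathsfUdef intro}, and it is there that the terms $\tfrac43 u$, $-\tfrac23u$ enter.
\item $\mathsf{V}^{(1)}$ and $\mathsf{V}^{(2)}$ involve $\tfrac{u_x}{3}-v$ and $\tfrac{u_{xx}}{3}-v_x$, respectively.
\end{itemize}
Each is an explicit matrix with entries built from powers of $\omega$.

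For $l=1$, the diagonal equations of \eqref{xrecursive} read $(\partial_x\mu_j^{(1)})^{(d)}=(\mathsf{U}^{(1)})^{(d)}$ and $(\partial_t\mu_j^{(1)})^{(d)}=(\mathsf{V}^{(0)}\mu_j^{(1)}+\mathsf{V}^{(1)})^{(d)}$. The off-diagonal equation $[J,\mu_j^{(1)}]=0$ forces $\mu_j^{(1)}$ to be diagonal, because $J$ has distinct eigenvalues. We then compute the diagonal of $\mathsf{U}^{(1)}$, expecting $-\tfrac23u\,\mathrm{diag}(\omega^2,\omega,1)$, and the diagonal of $\mathsf{V}^{(1)}+\mathsf{V}^{(0)}\mu_j^{(1)}$. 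For the latter we need $(\mathsf{V}^{(0)})^{(d)}=0$ (plausible, since the diagonal trace-free part $\mathrm{diag}(\tfrac43,-\tfrac23,-\tfrac23)u$ has vanishing $P_0$-conjugated diagonal), and we expect the diagonal of $\mathsf{V}^{(1)}$ to be $-\tfrac23 v\,\mathrm{diag}(\omega^2,\omega,1)$. Hence $d\mu_j^{(1)}=\Delta^{(1)}\mathrm{diag}(\omega^2,\omega,1)$. The form $\Delta^{(1)}$ is closed because $u_t=v_x$, so we may integrate along any path from $(x_j,t_j)$; the normalization $\mu_j^{(1)}(x_j,t_j)=0$ fixes the constant. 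For $j=3$ we take $x_3=+\infty$ and use the decay of $u,v$.

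For $l=2$, the off-diagonal relation $[J,\mu_j^{(2)}]=(\partial_x\mu_j^{(1)}-\mathsf{U}^{(1)})^{(o)}=-(\mathsf{U}^{(1)})^{(o)}$ determines $(\mu_j^{(2)})^{(o)}$ entrywise, by dividing by $J_{ii}-J_{mm}$. This should give the first term $\tfrac{2u}{3(1-\omega)}(\cdots)$, and checking the displayed matrix is a routine entry computation. The diagonal is fixed by $(\partial_x\mu_j^{(2)})^{(d)}=(\mathsf{U}^{(1)}\mu_j^{(1)}+\mathsf{U}^{(2)})^{(d)}$. Because $\mu_j^{(1)}$ is diagonal, the product term is $(\mathsf{U}^{(1)})^{(d)}\mu_j^{(1)}$. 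Writing $(\mu_j^{(1)})_{11}=\omega^2(\mu_j^{(1)})_{33}$ and $(\mu_j^{(1)})_{22}=\omega(\mu_j^{(1)})_{33}$, this collapses to $-\tfrac{2u(\mu^{(1)}_j)_{33}}{3}\,\mathrm{diag}(\omega,\omega^2,1)$, using $\omega^4=\omega$. Together with the diagonal of $\mathsf{U}^{(2)}$ this gives the $dx$-part of $\Delta_j^{(2)}$. The $t$-relation $(\partial_t\mu_j^{(2)})^{(d)}=(\mathsf{V}^{(0)}\mu_j^{(2)}+\mathsf{V}^{(1)}\mu_j^{(1)}+\mathsf{V}^{(2)})^{(d)}$ now involves the off-diagonal part of $\mu_j^{(2)}$ just found, through $\mathsf{V}^{(0)}(\mu_j^{(2)})^{(o)}$. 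This is what produces the $4u^2$ term, while $\mathsf{V}^{(2)}$ gives $u_{xx}-3v_x$ and $\mathsf{V}^{(1)}\mu_j^{(1)}$ gives $-6v(\mu_j^{(1)})_{33}$.

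The main obstacle is this last step. We must verify that all three diagonal entries of the $dt$-component are proportional to $\mathrm{diag}(\omega,\omega^2,1)$ with the single coefficient $\tfrac{4u^2+u_{xx}-3v_x-6v(\mu^{(1)}_j)_{33}}{9}$, which is a somewhat delicate bookkeeping with cube roots of unity. The $\mathcal{A}$-symmetry of Proposition \ref{XYprop}(e) reduces the check to the $(3,3)$ entry. Closedness of $\Delta_j^{(2)}$ need not be checked separately: it follows from the compatibility of the Lax pair \eqref{Xlax}, since $\mu_j$ solves both parts. This also justifies integrating along any path from $(x_j,t_j)$ with the normalization $\mu_j^{(2)}(x_j,t_j)=0$; the off-diagonal term is local, so only the diagonal part is normalized by the integral.
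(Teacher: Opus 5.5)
Your proposal is correct and follows the paper's route: the paper only asserts that these coefficients come out of the recursion \eqref{xrecursive} with the initial assignments and the normalization at $(x_j,t_j)$, and your explicit conjugation by $P(k)=\mathrm{diag}(1,k,k^{2})P_0$ carries out that computation; the step you call delicate is in fact short, since $(\mathsf{V}^{(0)}(\mu_j^{(2)})^{(o)})_{33}=\frac{4u^{2}}{9}\big(\frac{1}{1-\omega}+\frac{1}{1-\omega^{2}}\big)=\frac{4u^{2}}{9}$. Two minor points: $\mathsf{V}^{(1)}$ also receives the $(3,2)$-entry $-\frac{u_x}{3}-v$, which is why its diagonal is $-\frac{2}{3}v\,\mathrm{diag}(\omega^{2},\omega,1)$ with no $u_x$; and your reading that only the diagonal part of $\mu_j^{(2)}$ is normalized at $(x_j,t_j)$ is the right one, consistent with the stated formula, since the local off-diagonal term does not vanish there when $u(x_j,t_j)\neq 0$.
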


This leads to the following formulas for recovering $u$ and $v$: for $j=1,2,3$,
\begin{align}\label{recoveru}
u(x,t) = -\frac{3}{2}\frac{d}{dx}\lim_{k\to \infty}k(\mu_{j}(x,t,k)-1)_{33}, \\
v(x,t) = -\frac{3}{2}\frac{d}{dt}\lim_{k\to \infty}k(\mu_{j}(x,t,k)-1)_{33},
\end{align}
where the limits must be taken within the corresponding region of boundedness of $[\mu_{j}]_{3}$.

We now turn to the behavior of $\{\mu_{j}(x,t,k)\}_{j=1}^{3}$ as $k\to 0$.  
By transforming the Lax pair equations to a form which is regular at $k=0$, one obtains the following result (see \cite[Proposition 3.3]{CLgoodboussinesq} for a proof in a similar situation).

\begin{proposition}[Asymptotics of $\mu_{j}$ as $k \to 0$]\label{XYat1prop}
Suppose $u_0, v_0 \in \mathcal{S}(\R_{+})$ and $\tilde{u}_{0}, \tilde{u}_{1}, \tilde{u}_{2}, \tilde{v}_0 \in C^{\infty}([0,T])$ and let $p \geq 0$ be an integer. 
Then there are $3 \times 3$-matrix valued functions $C_j^{(l)}(x,t)$, $j = 1, 2, 3$, $l = -2,-1, \dots, p$, with the following properties:
\begin{itemize}
\item For $x \geq 0$, $t\in [0,T]$, and $k$ within the domains of definition \eqref{lol4main} of $\mu_{j}$, the function $\mu_{j}$ satisfies
\begin{align}\label{Xat0}
& \bigg|\frac{\partial^m}{\partial k^m}\big(\mu_{j} - I - \sum_{l=-2}^p C_j^{(l)}(x,t)k^l\big) \bigg| \leq
f_j(x,t)|k|^{p+1-m}, \qquad |k| \leq \frac{1}{2},
\end{align}
where $m \geq 0$ is any integer, $f_j(x,t)$ are smooth positive functions of $x \geq 0$ and $0 \leq t \leq T$, and $f_{3}(x,t)$ has rapid decay as $x \to +\infty$.

\item For each $l \geq -2$ and $j\in \{1,2,3\}$, $C_j^{(l)}(x,t)$ is a smooth function of $x \geq 0$ and $0 \leq t \leq T$, and $C_3^{(l)}(x,t)$ has rapid decay as $x \to +\infty$. 

\item The leading coefficients have the form
\begin{align}
 C_{j}^{(-2)}(x,t)
= &\;
\alpha_{j}(x,t)\begin{pmatrix}
\omega &  \omega^{2} & 1  \\
\omega &  \omega^{2} & 1  \\
\omega &  \omega^{2} & 1  
\end{pmatrix}, \label{Cjminus2} \\
 C_{j}^{(-1)}(x,t) = &\; \beta_{j}(x,t) \begin{pmatrix}
\omega^{2} & \omega & 1 \\
\omega^{2} & \omega & 1 \\
\omega^{2} & \omega & 1 
\end{pmatrix} + \gamma_{j}(x,t) \begin{pmatrix}
\omega^{2} & 1 & \omega \\
1 & \omega & \omega^{2} \\
\omega & \omega^{2} & 1 
\end{pmatrix}, \label{Cjminus1}
	\\
C_{j}^{(0)}(x,t) = & -I + \delta_{j,1}(x,t)\begin{pmatrix}
1 & 1 & 1 \\
1 & 1 & 1 \\
1 & 1 & 1
\end{pmatrix} \nonumber \\
& + \delta_{j,2}(x,t) \begin{pmatrix}
1 & \omega^{2} & \omega \\
\omega & 1 & \omega^{2} \\
\omega^{2} & \omega & 1
\end{pmatrix} + \delta_{j,3}(x,t) \begin{pmatrix}
1 & \omega & \omega^{2} \\
\omega^{2} & 1 & \omega \\
\omega & \omega^{2} & 1
\end{pmatrix}, \label{Cjminus0} \\
C_{j}^{(1)}(x,t) = & \; \epsilon_{j,1}(x,t) \begin{pmatrix}
\omega & \omega^{2} & 1 \\
\omega & \omega^{2} & 1 \\
\omega & \omega^{2} & 1
\end{pmatrix} + \epsilon_{j,2}(x,t) \begin{pmatrix}
\omega & \omega & \omega \\
\omega^{2} & \omega^{2} & \omega^{2} \\
1 & 1 & 1
\end{pmatrix} \nonumber \\
& \; + \epsilon_{j,3}(x,t) \begin{pmatrix}
\omega & 1 & \omega^{2} \\
1 & \omega^{2} & \omega \\
\omega^{2} & \omega & 1
\end{pmatrix}, \label{Cjp1p} \\
C_{j}^{(2)}(x,t) = & \; f_{j,1}(x,t) \begin{pmatrix}
\omega^{2} & \omega & 1 \\
\omega^{2} & \omega & 1 \\
\omega^{2} & \omega & 1
\end{pmatrix} + f_{j,2}(x,t) \begin{pmatrix}
\omega^{2} & 1 & \omega \\
1 & \omega & \omega^{2} \\
\omega & \omega^{2} & 1
\end{pmatrix} \nonumber \\
& \; + f_{j,3}(x,t) \begin{pmatrix}
\omega^{2} & \omega^{2} & \omega^{2} \\
\omega & \omega & \omega \\
1 & 1 & 1
\end{pmatrix}, \label{Cjp2p}
\end{align}
where $\alpha_j(x,t)$, $\beta_j(x,t)$, $\gamma_j(x,t)$, $\delta_{j,l}(x,t)$, $\epsilon_{j,l}(x,t)$, $f_{j,l}(x,t)$, $j = 1,2,3$, $l=1,2,3$, are  real-valued functions of $x \geq 0$ and $0 \leq t \leq T$, and for $j=3$ these functions have rapid decay as $x \to +\infty$.
\end{itemize}
\end{proposition}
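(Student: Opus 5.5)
The plan is to follow the strategy of \cite[Proposition 3.3]{CLgoodboussinesq}: conjugate the Lax pair by $P(k)$ to obtain a system regular at $k=0$, use the Volterra equations for that system, and then undo the conjugation. Set $\psi := P(k)\mu\, P(k)^{-1}$, or more precisely work with $\Phi_j(x,t,k) := P(k)\mu_j(x,t,k)e^{\mathcal{L}x+\mathcal{Z}t}$. Since $L = P^{-1}\tilde{L}P$ and $Z=P^{-1}\tilde{Z}P$, $\Phi_j$ solves $\Phi_x = \tilde{L}\Phi$, $\Phi_t=\tilde{Z}\Phi$. The matrices $\tilde{L}$ and $\tilde{Z}$ are polynomial in $k^3$, hence entire in $k$. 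The normalization at $(x_j,t_j)$ becomes $\Phi_j(x_j,t_j,k)=P(k)e^{\mathcal{L}x_j+\mathcal{Z}t_j}$.

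For $j=1,2$ the base point $(x_j,t_j)$ is finite. Hence $\Phi_j = \Phi^{\mathrm{fund}}_j(x,t,k)\,P(k)e^{\mathcal{L}x_j+\mathcal{Z}t_j}$, where $\Phi^{\mathrm{fund}}_j$ is the fundamental solution of the $(\tilde{L},\tilde{Z})$ system equal to $I$ at $(x_j,t_j)$. It is obtained by integrating along $\gamma_j$ and is entire in $k$, smooth in $(x,t)$, and has locally uniform bounds. Then
\[
\mu_j=P(k)^{-1}\Phi^{\mathrm{fund}}_jP(k)e^{\mathcal{L}(x_j-x)+\mathcal{Z}(t_j-t)}.
\]
The factors $P(k)$ and the exponentials are entire. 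From \eqref{Pdef intro}, the inverse has the form $P(k)^{-1}=\frac{1}{3}\mathrm{diag}$-type combinations of the columns $(\bar\omega^{\cdot},k^{-1}\cdot,k^{-2}\cdot)$. Explicitly, $P^{-1}$ has rows proportional to $(\omega^{2}, \omega k^{-1}, k^{-2})/3$ and its cyclic analogues. So $\mu_j$ is meromorphic at $0$ with at most a double pole. Taylor expanding everything in $k$ gives \eqref{Xat0} with locally bounded $f_j$, and derivatives in $k$ follow by Cauchy estimates. For $j=3$ the base point is $x=+\infty$, so this argument fails. There I would write Volterra equations for the columns of $\Phi_3$ normalized by the zero-potential solution $P e^{\mathcal{L}x}$. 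Near $k=0$ these exponentials are bounded by $e^{C|k||x'-x|}$, not uniformly, so I would instead use the small-$k$ expansion of the kernel $e^{\tilde{L}_0(x-x')}$, with $\tilde L_0$ the nilpotent free part, and Schwartz decay of $u,v$. This makes the Volterra series converge with polynomially weighted bounds and yields a Taylor expansion in $k$ with coefficients having rapid decay in $x$. This is the same mechanism as on the line in \cite{CLgoodboussinesq}.

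The structural formulas \eqref{Cjminus2}--\eqref{Cjp2p} are obtained from two symmetries together with the form of $P^{-1}$. First, the $k^{-2}$ coefficient comes only from the third column of $P^{-1}$, namely $k^{-2}(1,1,1)^T/3$, multiplied by the row $e_3^T\Phi P\cdots|_{k=0}$. This forces all rows of $C_j^{(-2)}$ to be equal. The symmetry $\mu_j(k)=\mathcal{A}\mu_j(\omega k)\mathcal{A}^{-1}$ from Proposition \ref{XYprop}(e) then forces that row to be $\alpha_j(\omega,\omega^2,1)$. Similarly, at order $k^{-1}$ the contributions lie in the span of the listed rank-one matrices. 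At each order $k^l$, the $\mathcal{A}$-symmetry requires $C^{(l)}=\omega^{l}\mathcal{A}C^{(l)}\mathcal{A}^{-1}$. This cuts the $9$-dimensional space down to $3$ dimensions, spanned exactly by the three circulant-type matrices displayed. The pole structure from $P^{-1}$ then removes the extra directions at orders $-2,-1$. Reality of the coefficients follows from the $\mathcal{B}$-symmetry $\mu_j(k)=\mathcal{B}\overline{\mu_j(\bar k)}\mathcal{B}$ applied to each basis element. The $-I$ in $C_j^{(0)}$ is pure bookkeeping from writing $\mu_j - I$.

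I expect the main obstacle to be the case $j=3$. There one needs uniform control as $k\to0$ together with rapid decay of the remainder functions $f_3$ and of $C_3^{(l)}$ in $x$. This requires handling the Jordan-block degeneracy of $\tilde{L}$ at $k=0$, where the free solutions grow polynomially in $x$, using the Schwartz decay of $u_0,v_0$ in the $x$-direction. I would first try to reduce this to the whole-line estimate of \cite[Proposition 3.3]{CLgoodboussinesq}, restricting the potential to $x\ge 0$ after extending by zero or smoothly.
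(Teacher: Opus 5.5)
Your proposal is correct and follows the route the paper indicates: pass via $P(k)$ to the pair $(\tilde L,\tilde Z)$, which is regular at $k=0$, and argue as in \cite[Proposition 3.3]{CLgoodboussinesq}. Your derivation of \eqref{Cjminus2}--\eqref{Cjp2p} also checks out: the spaces $\{C=\omega^{l}\mathcal{A}C\mathcal{A}^{-1}\}$ for $l=0,1,2$ are exactly the displayed three-dimensional spans, and for $l=-2,-1$ the fact that the columns of $C_j^{(l)}$ lie in $\mathrm{span}\{(1,1,1)^T\}$, resp.\ $\mathrm{span}\{(1,1,1)^T,(\omega,\omega^2,1)^T\}$, cuts these spaces down to exactly the stated forms.

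Two small points. First, for $j=3$ the uniformity near $k=0$ comes from working in the sector of each column, where $|e^{(l_i-l_j)(x-x')}|\le 1$ for all $i$ and $x'\ge x$. Combined with analyticity at $k=0$, this gives $|\partial_k^n e^{(\tilde L_0-l_j)(x-x')}|\le C_n(1+x'-x)^{n+2}$ for $|k|\le\frac{1}{2}$ in that sector, with $\tilde L_0$ the $u=v=0$ part of $\tilde L$. A plain Taylor expansion of the kernel, without the sector restriction, does not give integrable remainders. Second, since $\mu_3\to I$ as $x\to+\infty$, your argument gives $C_3^{(0)}\to 0$, i.e.\ $\delta_{3,l}\to\frac{1}{3}$. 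So what decays rapidly is $\delta_{3,l}-\frac{1}{3}$, not $\delta_{3,l}$ itself as the statement literally claims.
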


In \cite{CLgoodboussinesq}, it was enough to compute the coefficients $C_{j}^{(l)}$ for $l=-2,-1,0$. In this paper, we also need explicit expressions for $\smash{C_{j}^{(l)}}$ for $l=1,2$, see \eqref{Cjp1p}--\eqref{Cjp2p}. This is because the expressions for $M$ in Lemma \ref{M1XYlemma} below are more complicated (they involve more spectral functions, such as $S(k)$) than their analogs in \cite{CLgoodboussinesq}. 

\begin{proposition}[Asymptotics of $\mu_{j}^{A}$ as $k \to 0$]\label{XAYAat1prop}
Suppose $u_0, v_0 \in \mathcal{S}(\R_{+})$ and $\tilde{u}_{0}, \tilde{u}_{1}, \tilde{u}_{2}, \tilde{v}_0 \in C^{\infty}([0,T])$ and let $p \geq 0$ be an integer. 
Then there are $3 \times 3$-matrix valued functions $D_j^{(l)}(x,t)$, $j = 1,2,3$, $l = -2,-1, \dots, p$, with the following properties:
\begin{itemize}
\item For $x \geq 0$, $t\in [0,T]$ and $k$ within the domains of definition \eqref{lol4main adjoint} of $\mu_{j}^{A}$, the function $\mu_{j}^A$ satisfies
\begin{align}\label{XAYAat1a}
& \bigg|\frac{\partial^m}{\partial k^m}\big(\mu_{j}^A - I - \sum_{l=-2}^p D_1^{(l)}(x,t)k^l\big) \bigg| \leq
f_j(x,t)|k|^{p+1-m}, \qquad |k| \leq \frac{1}{2},
\end{align}
where $m \geq 0$ is any integer, $f_j(x,t)$ are smooth positive functions of $x \geq 0$ and $0 \leq t \leq T$, and $f_{3}(x,t)$ has rapid decay as $x \to +\infty$.

\item For each $l\geq -2$ and $j\in \{1,2,3\}$, $D_j^{(l)}(x,t)$ is a smooth function of $x \geq 0$ and $t \in [0,T]$, and $D_{3}^{(l)}(x,t)$ has rapid decay as $x \to +\infty$. 

\item The leading coefficients have the form
\begin{align}
D_{j}^{(-2)}(x,t)
= & \;
\tilde{\alpha}_{j}(x,t)\begin{pmatrix}
\omega & \omega & \omega \\
\omega^{2} & \omega^{2} & \omega^{2} \\
1 & 1 & 1 
\end{pmatrix}, \label{Djminus2} \\
D_{j}^{(-1)}(x,t) = & \;  \tilde{\beta}_{j}(x,t) \begin{pmatrix}
\omega^{2} & 1 & \omega \\
1 & \omega & \omega^{2} \\
\omega & \omega^{2} & 1 
\end{pmatrix} + \tilde{\gamma}_{j}(x,t) \begin{pmatrix}
\omega^{2} & \omega^{2} & \omega^{2} \\
\omega & \omega & \omega \\
1 & 1 & 1 
\end{pmatrix},  \label{Djminus1} \\
D_{j}^{(0)}(x,t) = & -I + \tilde{\delta}_{j,1}(x,t) \begin{pmatrix}
1 & \omega^{2} & \omega \\
\omega & 1 & \omega^{2} \\
\omega^{2} & \omega & 1
\end{pmatrix}  \nonumber \\
&  + \tilde{\delta}_{j,2}(x,t) \begin{pmatrix}
1 & \omega & \omega^{2} \\
\omega^{2} & 1 & \omega \\
\omega & \omega^{2} & 1
\end{pmatrix} + \tilde{\delta}_{j,3}(x,t)\begin{pmatrix}
1 & 1 & 1 \\
1 & 1 & 1 \\
1 & 1 & 1
\end{pmatrix}, \label{Djminus0} \\
D_{j}^{(1)}(x,t) = & \; \tilde{\epsilon}_{j,1}(x,t) \begin{pmatrix}
\omega & \omega & \omega \\
\omega^{2} & \omega^{2} & \omega^{2} \\
1 & 1 & 1
\end{pmatrix} + \tilde{\epsilon}_{j,2}(x,t) \begin{pmatrix}
\omega & 1 & \omega^{2} \\
1 & \omega^{2} & \omega \\
\omega^{2} & \omega & 1
\end{pmatrix} \nonumber \\
& + \tilde{\epsilon}_{j,3}(x,t) \begin{pmatrix}
\omega & \omega^{2} & 1 \\
\omega & \omega^{2} & 1 \\
\omega & \omega^{2} & 1
\end{pmatrix}, \label{Djp1p} \\
D_{j}^{(2)}(x,t) = & \; \tilde{f}_{j,1}(x,t) \begin{pmatrix}
\omega^{2} & 1 & \omega \\
1 & \omega & \omega^{2} \\
\omega & \omega^{2} & 1
\end{pmatrix} + \tilde{f}_{j,2}(x,t) \begin{pmatrix}
\omega^{2} & \omega^{2} & \omega^{2} \\
\omega & \omega & \omega \\
1 & 1 & 1
\end{pmatrix} \nonumber \\
& + \tilde{f}_{j,3}(x,t) \begin{pmatrix}
\omega^{2} & \omega & 1 \\
\omega^{2} & \omega & 1 \\
\omega^{2} & \omega & 1
\end{pmatrix}, \label{Djp2p}
\end{align}
where $\tilde{\alpha}_j(x,t)$, $\tilde{\beta}_j(x,t)$, $\tilde{\gamma}_j(x,t)$, $\tilde{\delta}_{j,l}(x,t)$, $\tilde{\epsilon}_{j,l}(x,t)$, $\tilde{f}_{j,l}(x,t)$, $j = 1,2,3$, $l=1,2,3$ are real-valued functions of $x \geq 0$ and $t\in [0,T]$, and for $j=3$ these functions have rapid decay as $x \to +\infty$. 
\end{itemize}
\end{proposition}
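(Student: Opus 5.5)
We obtain Proposition \ref{XAYAat1prop} from Proposition \ref{XYat1prop}, using the adjoint relation $\mu_j^A=(\mu_j^{-1})^T$ from Proposition \ref{XAYAprop}$(f)$ where it is available, and otherwise repeating the argument of Proposition \ref{XYat1prop} for the adjoint Lax pair \eqref{Xlax adjoint}. For the analytic part (existence of the expansions, the estimates \eqref{XAYAat1a}, smoothness and rapid decay for $j=3$), we follow the proof of \cite[Proposition 3.3]{CLgoodboussinesq}. Set $\psi = P(k)\mu^A e^{\mathcal L x+\mathcal Z t}$, or the analogous adjoint gauge built from $(P^{-1})^T$. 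The adjoint equations then become $\psi_x = -\tilde L^T\psi$ and $\psi_t=-\tilde Z^T\psi$, whose coefficients are polynomial in $k$ and regular at $k=0$. For $j=1,2$ the Volterra equations along $\gamma_j$ run over compact sets, so $\psi$ is entire in $k$. Undoing the gauge, $(P(k)^{-1})^T$ has entries of order $k^{-2}$, since $\det P$ vanishes to order $k^3$ at $k=0$. This yields a Laurent expansion starting at $k^{-2}$, with remainder bounds differentiable in $k$. For $j=3$ we instead argue as in \cite{CLgoodboussinesq} on the restricted sectors. There the exponentials $e^{(l_i-l_j)(x'-x)}$ are bounded uniformly for small $k$, and the decay of $u_0,v_0$ (and of $u,v$ at each fixed $t$) gives the rapidly decaying $f_3$ and coefficients $D_3^{(l)}$.

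For the explicit structure \eqref{Djminus2}--\eqref{Djp2p}, we invert the expansion of $\mu_j$ in the cases $j=1,2$ by writing $\mu_j^A=(\mu_j^{-1})^T=(\operatorname{adj}\mu_j)$ with $\det\mu_j=1$. The cofactor matrix of $I+\sum_{l\ge -2}C_j^{(l)}k^l$ is quadratic in the entries, so a priori it could have a $k^{-4}$ term. The rank-one structure of $C_j^{(-2)}$ and the specific forms of $C_j^{(-1)},C_j^{(0)}$ in \eqref{Cjminus2}--\eqref{Cjminus0} force the cancellations. Concretely, all columns of the matrices in \eqref{Cjminus2} are proportional to $(1,1,1)^T$ or to twisted versions of it, so every $2\times2$ minor of the singular part vanishes to the needed order.

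A cleaner route avoids computing minors. The symmetries in Proposition \ref{XAYAprop}$(e)$ give $D_j^{(l)}=\omega^{l}\mathcal A D_j^{(l)}\mathcal A^{-1}$ and reality of the coefficients. We combine these with the constraint that $(\mu^A)^T\mu=I$ holds order by order, and with the fact that the gauge formula forces $D^{(l)}$, for $l<0$, to lie in the span of $(P^{-1})^T$-type matrices. Each of these restricts the $D_j^{(l)}$ to the stated three-dimensional families of $\omega$-circulant matrices. The $\mathcal A$-eigen-relation with eigenvalue $\omega^{l}$ cuts the $9$-dimensional space down to $3$ dimensions. For each $l$ this is exactly spanned by the three listed matrices, which we check directly. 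Reality of the scalar coefficients follows from the $\mathcal B$-symmetry: the listed basis matrices $E$ satisfy $\mathcal B\overline{E}\mathcal B=E$ up to relabeling, which we check basis by basis. For $l=-2$ and $l=-1$, the extra vanishing (one or two coefficients instead of three) comes from the leading-order equations, for instance $[\mathcal L,D^{(-2)}]k$-terms in the $x$-equation. Equivalently, it comes from the gauge matrix $(P^{-1})^T$ having rank one at order $k^{-2}$ and rank two at order $k^{-1}$.

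For $j=3$, the relation $\mu_3^A=(\mu_3^{-1})^T$ only holds for compactly supported data. Either we use density, or we directly apply the same symmetry and gauge argument, which does not use the inverse. The main obstacle is the bookkeeping at orders $l=1,2$. There we must verify that the three-dimensional $\mathcal A$-eigenspaces are spanned exactly by the matrices in \eqref{Djp1p}--\eqref{Djp2p}, and we must check that the index conventions are consistent with \eqref{Djminus2}--\eqref{Djminus0}. The gauge and symmetry argument reduces this to a linear algebra check with $\omega$, which we would do by hand.
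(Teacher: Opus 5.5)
Your proposal is correct. Its analytic half is what the paper does. The paper's only argument is to transform the Lax pair to a form regular at $k=0$, as in \cite[Proposition 3.3]{CLgoodboussinesq}, and the explicit forms \eqref{Djminus2}--\eqref{Djp2p} are then presumably read off by expanding that regular form. You differ in the structural half, which you get from symmetry instead of computation, and this works.

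Put $\mathbf 1=(1,1,1)^T$, $a=(\omega,\omega^2,1)^T$, $b=(\omega^2,\omega,1)^T$. The $\mathcal{A}$-symmetry gives $D^{(l)}_{ij}=\omega^{l}D^{(l)}_{i-1,j-1}$ (indices mod $3$). Each such space is three-dimensional, and for $l=0,1,2$ the listed matrices are bases of it. Consistently with the lists, $D^{(-2)}$ and $D^{(1)}$ share a space, as do $D^{(-1)}$ and $D^{(2)}$.

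Every listed matrix is either $I$ or of the form $xy^T$ with $x,y\in\{\mathbf 1,a,b\}$. Since $\mathcal{B}\bar{\mathbf 1}=\mathbf 1$, $\mathcal{B}\bar a=a$ and $\mathcal{B}\bar b=b$, each is fixed exactly by $E\mapsto\mathcal{B}\bar E\mathcal{B}$. So no relabeling occurs, and this is precisely what gives reality; a genuine relabeling would only give relations like $c_1=\bar c_2$.

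Your route yields the forms with essentially no computation. The computational route in addition identifies the scalar coefficients in terms of the regularized fundamental solution, which symmetry alone cannot do.

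A few points need repair.

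\emph{The gauge.} Your first gauge $\psi=P\mu^Ae^{\mathcal{L}x+\mathcal{Z}t}$ does not turn the adjoint equations into $\psi_x=-\tilde L^T\psi$. The correct gauge is $\psi=(P^{-1})^T\mu^Ae^{-\mathcal{L}x-\mathcal{Z}t}$. For $j=1,2$ this gives $\mu_j^A=P^T\Psi_j(P^{-1})^Te^{\mathcal{L}(x-x_j)+\mathcal{Z}(t-t_j)}$, where $\Psi_j$ solves $\Psi_x=-\tilde L^T\Psi$, $\Psi_t=-\tilde Z^T\Psi$, $\Psi_j(x_j,t_j)=I$. It is $\Psi_j$, not $\psi$, that is entire. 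The double pole comes from $(P^{-1})^T=\frac13\big(e_1b^T+k^{-1}e_2a^T+k^{-2}e_3\mathbf 1^T\big)$.

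\emph{The extra vanishing at $l=-2,-1$.} This is most robustly read off from the left factor $P^T=ae_1^T+k\,be_2^T+k^2\mathbf 1e_3^T$. Suppose $\mu^A=P^TX$, where $X=O(k^{-2})$ has a Laurent expansion established independently. Then the columns of $D^{(-2)}$ are multiples of $a$, and those of $D^{(-1)}$ lie in $\mathrm{span}(a,b)$. The $\mathcal{A}$-relation then forces exactly \eqref{Djminus2} and \eqref{Djminus1}. This is the version you need for $j=3$, because column by column there is no right factor $(P^{-1})^T$ to invoke. The leading-order $x$-equation alone does not suffice: $\mathsf U^{(2)T}D^{(-2)}=0$ only gives $\mathbf 1^TD^{(-2)}=0$, which leaves $\mathrm{span}(a\mathbf 1^T,bb^T)$.

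\emph{Existence for $j=3$.} Bounded exponentials do not by themselves give the expansion, because $\mathsf U^T$ has a double pole at $k=0$. Instead, run the Volterra equation for $\hat\chi=k^2(P^{-1})^T[\mu_3^A]_3$, normalized to $\frac13(k^2,k,1)^T$ at $x=+\infty$. Its kernel $(P^{-1})^Te^{(x'-x)(\mathcal{L}-l_3)}P^T$ is entire in $k$ and grows at most like $(1+x'-x)^2$ on the sector. The Schwartz decay of $u,v$ absorbs this growth and that of its $k$-derivatives. Then $[\mu_3^A]_3=k^{-2}P^T\hat\chi$ feeds into the left-factor argument above.

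\emph{A remark on the statement.} The three matrices in \eqref{Djminus0} sum to $3I$. Hence decay of $D_3^{(0)}$ means that $\tilde\delta_{3,l}-\frac13$ decays, not $\tilde\delta_{3,l}$ itself, so the statement is imprecise on this point.
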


\subsection{The spectral functions $s(k)$ and $S(k)$}

Suppose temporarily that $u_0$ and $v_0$ are compactly supported. Then all columns of $\mu_{1},\mu_{2},\mu_{3}$ are analytic for $k \in \C\setminus\{0\}$, and since each of the three matrix-valued functions $\psi_{j}(x,t,k):=\mu_{j}(x,t,k)e^{x \mathcal{L}(k)+t \mathcal{Z}(k)}$, $j = 1,2,3$, satisfies the system
\begin{align*}
\begin{cases}
\psi_{x} = L \psi, \\
\psi_{t} = Z \psi,
\end{cases}
\end{align*}
they are related by some $(x,t)$-independent matrices, which we denote by $S(k)$ and $s(k)$, so that 
\begin{align}\label{mu3mu2mu1sS}
& \mu_{1}(x,t,k) = \mu_{2}(x,t,k)e^{x\hat{\mathcal{L}}(k)+t\hat{\mathcal{Z}}(k)}S(k), & & \mu_{3}(x,t,k) = \mu_{2}(x,t,k)e^{x\hat{\mathcal{L}}(k)+t\hat{\mathcal{Z}}(k)}s(k).
\end{align}
Evaluating both equations at $(x,t)=(0,0)$ and recalling that $\mu_{2}(0,0,k)=I$, we get
\begin{align}\label{def of S and s}
S(k) = \mu_1(0,0,k), \qquad  s(k) = \mu_3(0,0,k).
\end{align}

Suppose now that the initial values are not necessarily compactly supported. Then the second equation in \eqref{mu3mu2mu1sS} is not necessarily defined for all $k\in \C\setminus \{0\}$, but we nevertheless define $s$ as in \eqref{def of S and s}. 
Similarly, we define $S^A$ and $s^A$ by
\begin{align}\label{SAsAdef}
& S^{A}(k) = \mu_{1}^{A}(0,0,k), \qquad s^{A}(k) = \mu_{3}^{A}(0,0,k).
\end{align}
The next two propositions are a consequence of (\ref{def of S and s}), (\ref{SAsAdef}), and the properties of $\mu_1$ and $\mu_3$ established in Propositions \ref{XYprop}--\ref{XAYAat1prop}.

\begin{proposition}[Properties of $s(k)$ and $S(k)$]\label{sprop}
Suppose $u_0, v_0 \in \mathcal{S}(\R_{+})$ and $\tilde{u}_{0}, \tilde{u}_{1}, \tilde{u}_{2}, \tilde{v}_0 \in C^{\infty}([0,T])$. 
Then the spectral functions $s(k)$ and $S(k)$ have the following properties:
\begin{enumerate}[$(a)$]
\item The entries of $s(k)$ and $S(k)$ have the following domains of definition:
\begin{subequations}\label{sSboundednessregions}
\begin{align}
& s(k) \; \text{is well-defined and continuous for} \; k \in (\omega^{2} \overline{\mathcal{S}}, \omega \overline{\mathcal{S}}, \overline{\mathcal{S}})\setminus\{0\}, \label{region of boundedness of s}
	\\
& S(k) \; \text{is well-defined and continuous for} \; k \in \C \setminus\{0\}. \label{region of boundedness of S}
\end{align}
\end{subequations}
 
\item The entries of $s(k)$ and $S(k)$ are analytic in the interior of their domains of definition as given in \eqref{sSboundednessregions}. 
 
\item For $j = 1, 2, \dots$, the derivatives $\partial_k^js(k)$ and $\partial_k^j S(k)$ are well-defined and continuous for $k$ as in \eqref{sSboundednessregions}.

\item $s(k)$ and $S(k)$ obey the symmetries
\begin{align}\label{sym of s and S}
&  s(k) = \mathcal{A} s(\omega k)\mathcal{A}^{-1} = \mathcal{B} \overline{s(\overline{k})}\mathcal{B}, \qquad S(k) = \mathcal{A} S(\omega k)\mathcal{A}^{-1} = \mathcal{B} \overline{S(\overline{k})}\mathcal{B},
\end{align}

\item $s(k)$ and $S(k)$ approach the identity matrix as $k \to \infty$. More precisely, there are matrices $\{s_{l}\}_1^\infty$ and $\{S_{l}\}_1^\infty$ such that
\begin{align*}
& \Big|\partial_k^j \Big(s(k) - I - \sum_{{l}=1}^N \frac{s_{l}}{k^{l}}\Big)\Big| = O(k^{-N-1}), & & k \to \infty,
\ k \in (\omega^{2} \overline{\mathcal{S}}, \omega \overline{\mathcal{S}}, \overline{\mathcal{S}}), \\
& \Big|\partial_k^j \Big(S(k) - I - \sum_{{l}=1}^N \frac{S_{l}}{k^{l}}\Big)\Big| = O(k^{-N-1}), & & k \to \infty,
\ k \in (\omega^{2} \overline{\check{\mathcal{R}}}, \omega \overline{\check{\mathcal{R}}}, \overline{\check{\mathcal{R}}}),
\end{align*}
for $j = 0, 1, \dots, N$ and each integer $N \geq 1$.  

\item There exist matrices $\{s^{(p)}\}_{p=-2}^{+\infty}$ and $\{S^{(p)}\}_{p=-2}^{+\infty}$ such that, as $k \to 0$, 
\begin{align*}
& s(k) = \frac{s^{(-2)}}{k^{2}} + \frac{s^{(-1)}}{k} + s^{(0)} + s^{(1)}k + \ldots, & & k \in (\omega^{2} \overline{\mathcal{S}}, \omega \overline{\mathcal{S}}, \overline{\mathcal{S}}),  \\
& S(k) = \frac{S^{(-2)}}{k^{2}} + \frac{S^{(-1)}}{k} + S^{(0)} + S^{(1)}k + \ldots, & & k \in (\omega^{2} \overline{\check{\mathcal{R}}}, \omega \overline{\check{\mathcal{R}}}, \overline{\check{\mathcal{R}}}), 
\end{align*}
and the expansions can be differentiated termwise any number of times. The first matrices are given by
\begin{align}
& s^{(-2)} =  \mathfrak{s}^{(-2)} \begin{pmatrix}
\omega & \omega^{2} & 1 \\
\omega & \omega^{2} & 1 \\
\omega & \omega^{2} & 1 
\end{pmatrix}, \label{spm2p} \\
& s^{(-1)} =  \mathfrak{s}^{(-1)}_{1} \begin{pmatrix}
\omega^{2} & \omega & 1 \\
\omega^{2} & \omega & 1 \\
\omega^{2} & \omega & 1 
\end{pmatrix} + \mathfrak{s}^{(-1)}_{2} \begin{pmatrix}
0 & \omega^{2} & -\omega^{2} \\
-\omega & 0 & \omega \\
1 & -1 & 0 
\end{pmatrix}, \label{spm1p} \\
& s^{(0)} = I + \mathfrak{s}^{(0)}_{1} \begin{pmatrix}
1 & 1 & 1 \\
1 & 1 & 1 \\
1 & 1 & 1
\end{pmatrix} + \mathfrak{s}^{(0)}_{2} \begin{pmatrix}
0 & 1 & 1 \\
1 & 0 & 1 \\
1 & 1 & 0
\end{pmatrix} + \mathfrak{s}^{(0)}_{3} \begin{pmatrix}
0 & 1 & -1 \\
-1 & 0 & 1 \\
1 & -1 & 0
\end{pmatrix}, \label{sp0p} \\
& S^{(-2)} =  \mathscr{S}^{(-2)} \begin{pmatrix}
\omega & \omega^{2} & 1 \\
\omega & \omega^{2} & 1 \\
\omega & \omega^{2} & 1 
\end{pmatrix}, \label{Spm2p} \\
& S^{(-1)} =  \mathscr{S}^{(-1)}_{1} \begin{pmatrix}
\omega^{2} & \omega & 1 \\
\omega^{2} & \omega & 1 \\
\omega^{2} & \omega & 1 
\end{pmatrix} + \mathscr{S}^{(-1)}_{2} \begin{pmatrix}
0 & \omega^{2} & -\omega^{2} \\
-\omega & 0 & \omega \\
1 & -1 & 0 
\end{pmatrix}, \label{Spm1p} \\
& S^{(0)} = I + \mathscr{S}^{(0)}_{1} \begin{pmatrix}
1 & 1 & 1 \\
1 & 1 & 1 \\
1 & 1 & 1
\end{pmatrix} + \mathscr{S}^{(0)}_{2} \begin{pmatrix}
0 & 1 & 1 \\
1 & 0 & 1 \\
1 & 1 & 0
\end{pmatrix} + \mathscr{S}^{(0)}_{3} \begin{pmatrix}
0 & 1 & -1 \\
-1 & 0 & 1 \\
1 & -1 & 0
\end{pmatrix}, \label{Sp0p}
\end{align}
where
\begin{align*}
& \mathfrak{s}^{(-2)} := \int_{0}^{+\infty} \big( 2 u_{0} \gamma_{3} + (u_{0x}+v_{0})\delta_{3,3} \big)|_{t=0} dx \in \mathbb{R}, \\
& \mathfrak{s}^{(-1)}_{1} := \int_{0}^{+\infty} \big( 2 u_{0} \delta_{3,2} + (u_{0x}+v_{0})\epsilon_{3,3} \big)|_{t=0} dx \in \R, \\
& \mathfrak{s}^{(-1)}_{2} := -i \sqrt{3} \int_{0}^{+\infty} \big( 2 u_{0} \gamma_{3} + (u_{0x}+v_{0})\delta_{3,3} \big)|_{t=0} dx \in i\R, \\
& \mathfrak{s}^{(0)}_{1} \in \R, \qquad \mathfrak{s}^{(0)}_{2} \in \R, \qquad \mathfrak{s}^{(0)}_{3} \in i\R, \\
& \mathscr{S}^{(-2)} := \frac{1}{3} \int_{0}^{T} \big( 2 \tilde{u}_{0} \alpha_{1} + (\tilde{u}_{1}+3\tilde{v}_{0})\gamma_{1} + (3\tilde{v}_{1}-\tilde{u}_{2})\delta_{1,3} \big)|_{x=0} dt \in \mathbb{R}, \\
& \mathscr{S}^{(-1)}_{1} \in \mathbb{R}, \qquad \mathscr{S}^{(-1)}_{2} \in i\mathbb{R}, \\
& \mathscr{S}^{(0)}_{1} \in \mathbb{R}, \qquad \mathscr{S}^{(0)}_{2} \in \mathbb{R}, \qquad \mathscr{S}^{(0)}_{3} \in i\mathbb{R}.
\end{align*}

\item If $u_0, v_0$ have compact support, then $s(k)$ is defined and analytic for $k \in \C \setminus \{0\}$, $\det s(k) = 1$ for $k \in \C \setminus \{0\}$, and the second equation in \eqref{mu3mu2mu1sS} holds for $k \in \C \setminus \{0\}$.

\medskip \noindent 
Moreover, $S(k)$ is defined and analytic for $k \in \C \setminus \{0\}$, $\det S(k) = 1$ for $k \in \C \setminus \{0\}$, and the first equation in \eqref{mu3mu2mu1sS} holds for $k \in \C \setminus \{0\}$.
\end{enumerate}
\end{proposition}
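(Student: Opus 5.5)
The plan is to read off each item of Proposition \ref{sprop} from the identities $s(k)=\mu_3(0,0,k)$ and $S(k)=\mu_1(0,0,k)$ in \eqref{def of S and s}, so that everything reduces to Propositions \ref{XYprop} and \ref{XYat1prop} evaluated at $(x,t)=(0,0)$.

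\emph{Items (a)--(d).}
1. For $s$, Proposition \ref{XYprop}(a)--(c) give that $\mu_3(0,0,\cdot)$ is well-defined and continuous column-wise on $(\omega^2\overline{\mathcal S},\omega\overline{\mathcal S},\overline{\mathcal S})\setminus\{0\}$. It is analytic in the interior, and all its $k$-derivatives extend continuously to the boundary.
2. For $S$, Proposition \ref{XYprop} states that $\mu_1(x,t,k)$ is defined and analytic on $\C\setminus\{0\}$ for all $(x,t)$. Restricting to $(x,t)=(0,0)$ gives (a)--(c).
3. The symmetries \eqref{sym of s and S} are the symmetries of Proposition \ref{XYprop}(e) at $(0,0)$.

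\emph{Item (e).}
1. Use the large-$k$ expansion \eqref{mujlargekexpansion}, which holds in the region of boundedness of each column.
2. For $\mu_3$ this region is the domain in (a), and the estimate for $\mu_3-I$ in \eqref{region of boundedness of muj} makes the remainders uniform.
3. For $\mu_1(0,t,k)$, use the bound on $(\omega^2\overline{\check{\mathcal R}},\omega\overline{\check{\mathcal R}},\overline{\check{\mathcal R}})$.
4. The error estimates with derivatives come from the standard Volterra argument, as in \cite[Proposition 3.2]{CLgoodboussinesq}. One subtracts the truncated expansion built from the recursion \eqref{xrecursive}, or its $t$-analogue along $\gamma_1$ at $x=0$, and estimates the remainder. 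Integrating by parts in the Volterra equation produces the extra powers of $k^{-1}$.

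\emph{Item (f), the expansion at $k=0$.}
1. Proposition \ref{XYat1prop} at $(x,t)=(0,0)$ gives $s=I+\sum_l C_3^{(l)}(0,0)k^l$, termwise differentiable. Likewise $S=I+\sum_l C_1^{(l)}(0,0)k^l$ on the stated domain.
2. The structural forms \eqref{spm2p}--\eqref{Sp0p} are obtained by linear algebra. Write $s^{(-2)}=C_3^{(-2)}(0,0)$, $s^{(-1)}=C_3^{(-1)}(0,0)$ and $s^{(0)}=I+C_3^{(0)}(0,0)$, and use \eqref{Cjminus2}--\eqref{Cjminus0}. For the $s^{(-1)}$ claim, check that $\beta\,\mathrm{(rank\ one)}+\gamma\,\mathrm{(circulant)}$ is a combination of the two matrices in \eqref{spm1p}. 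If not directly, use the symmetry $s(k)=\mathcal A s(\omega k)\mathcal A^{-1}$, which constrains each coefficient via $s^{(p)}=\omega^p\mathcal A s^{(p)}\mathcal A^{-1}$. Together with $s^{(p)}=\mathcal B\overline{s^{(p)}}\mathcal B$, this pins down the admissible matrix spaces and the reality/imaginarity of $\mathfrak s$, $\mathscr S$.
3. The integral formulas for $\mathfrak s^{(-2)}$, $\mathfrak s^{(-1)}_j$ and $\mathscr S^{(-2)}$ come from \eqref{def of Ss}. Expand $s(k)=I-\int_0^\infty e^{-x\hat{\mathcal L}}(\mathsf U\mu_3)(x,0,k)\,dx$ at $k=0$, inserting the small-$k$ expansion of $\mu_3$ and of $P(k)^{\pm1}$, whose inverse has a $k^{-2}$ singularity. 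The leading coefficient then collects the terms $2u_0\gamma_3+(u_{0x}+v_0)\delta_{3,3}$.
4. The analogous $t$-integral at $x=0$ with $\mathsf V$, using $\alpha_1,\gamma_1,\delta_{1,3}$ from $C_1^{(l)}$, gives $\mathscr S^{(-2)}$.

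\emph{Item (g).} With compact support, Proposition \ref{XYprop}(f) gives analyticity of $\mu_3$ on $\C\setminus\{0\}$ and $\det=1$. Relation \eqref{mu3mu2mu1sS} then holds because $\psi_j=\mu_je^{x\mathcal L+t\mathcal Z}$ solve the same Lax system and hence differ by constant matrices. The statement for $S$ holds without any support assumption.

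The main obstacle is the explicit bookkeeping in item (f). This means extracting the precise coefficients from the singular conjugation by $P(k)$ in $\mathsf U$ and $\mathsf V$, and verifying that the resulting matrices lie in the spans stated in \eqref{spm1p} and \eqref{sp0p}. The symmetry constraints should do most of the work. I would first determine these matrix spaces abstractly from the $\mathcal A$- and $\mathcal B$-symmetries, and only then compute the scalar leading coefficients.
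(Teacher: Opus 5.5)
This is the paper's own argument. The paper only remarks that Proposition~\ref{sprop} follows from $S=\mu_1(0,0,\cdot)$, $s=\mu_3(0,0,\cdot)$ and Propositions~\ref{XYprop}--\ref{XYat1prop}, and your item-by-item version, identifying $s^{(p)},S^{(p)}$ with $C_3^{(p)}(0,0),C_1^{(p)}(0,0)$ (plus $I$ when $p=0$), is correct and more detailed than the paper's. The one thing to adjust is your plan to fix the matrix spaces in $(f)$ from the $\mathcal A$- and $\mathcal B$-symmetries: these leave a three-dimensional real space at every order, so \eqref{spm2p}--\eqref{spm1p} must be read off from \eqref{Cjminus2}--\eqref{Cjminus1}; this works directly because the second matrix in \eqref{Cjminus1} is the first matrix in \eqref{spm1p} plus $i\sqrt3$ times the second, whence $\mathfrak s^{(-1)}_2=i\sqrt3\,\gamma_3(0,0)=-i\sqrt3\int_0^\infty x\,(2u_0\gamma_3+(u_{0x}+v_0)\delta_{3,3})|_{t=0}\,dx$ (using $\partial_x\gamma_3=\alpha_3$ from the $k^{-1}$-coefficient of the $x$-equation), which suggests the displayed formula for $\mathfrak s^{(-1)}_2$ is missing a factor $x$.
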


By Proposition \ref{sprop}, $\det S(k) =1$ for all $k\in \C\setminus\{0\}$, and thus, by \eqref{mu3mu2mu1sS},
\begin{align}\label{global rel lol}
\mu_1(x,t,k) e^{x\hat{\mathcal{L}} + t \hat{\mathcal{Z}}}(S(k)^{-1}s(k)) = \mu_3(x,t,k), \qquad k \in (\omega^{2} \overline{\mathcal{S}}, \omega \overline{\mathcal{S}}, \overline{\mathcal{S}})\setminus\{0\}.
\end{align}
Evaluating \eqref{global rel lol} at $(0,T)$, using that $\mu_{1}(0,T,k) = I$, we find the global relation
\begin{align}\label{global rel}
S(k)^{-1}s(k) = e^{-T \hat{\mathcal{Z}}(k)}\mu_3(0,T,k), \qquad k \in (\omega^{2} \overline{\mathcal{S}}, \omega \overline{\mathcal{S}}, \overline{\mathcal{S}})\setminus\{0\}.
\end{align}
By taking limits as in \cite[Lemma 4.5]{CLgoodboussinesq}, we infer that the global relation \eqref{global rel} also holds for non-compactly supported initial data. This shows that the entries of $S^{-1}s$ have analytic continuations to $(\omega^{2} \mathcal{S}, \omega \mathcal{S}, \mathcal{S})$. 

\subsection{The spectral functions $s^A(k)$ and $S^A(k)$}
We define
\begin{align}\label{def of SA and sA}
& S^{A}(k) = I+\int_{0}^{T}e^{t\hat{\mathcal{Z}}(k)}(\mathsf{V}^{T}\mu_{1}^{A})(0,t,k)dt, & & s^{A}(k) = I+\int_{0}^{+\infty}e^{x\hat{\mathcal{L}}(k)}(\mathsf{U}^{T}\mu_{3}^{A})(x,0,k)dx.
\end{align}
and we have
\begin{align}\label{global rel adjoint}
S(k)^{T}s^{A}(k) = e^{T \hat{\mathcal{Z}}(k)}\mu_3^{A}(0,T,k), \qquad k \in (-\omega^{2} \overline{\mathcal{S}}, -\omega \overline{\mathcal{S}}, -\overline{\mathcal{S}})\setminus\{0\}.
\end{align}

\begin{proposition}[Properties of $s^A(k)$ and $S^A(k)$]\label{sAprop}
Suppose $u_0, v_0 \in \mathcal{S}(\R_{+})$ and $\tilde{u}_{0}, \tilde{u}_{1}, \tilde{u}_{2}, \tilde{v}_0 \in C^{\infty}([0,T])$. 
Then the spectral functions $s^A(k)$ and $S^A(k)$ have the following properties:
\begin{enumerate}[$(a)$]
\item The entries of $s(k)$ and $S(k)$ have the following domains of definition:
\begin{subequations}
\begin{align}
& s^{A}(k) \; \text{is well-defined and continuous for} \; k \in (-\omega^{2} \overline{\mathcal{S}}, -\omega \overline{\mathcal{S}}, -\overline{\mathcal{S}}), \label{region of boundedness of sA}
	\\
& S^{A}(k) \; \text{is well-defined and continuous for} \; k \in (\omega^{2}\overline{\check{\mathcal{T}}}, \omega \overline{\check{\mathcal{T}}},  \overline{\check{\mathcal{T}}}). \label{region of boundedness of SA}
\end{align}
\end{subequations}

\item The entries of $s^A(k)$ and $S^A(k)$ are analytic in the interior of their domains of definition as given in \eqref{region of boundedness of sA}--\eqref{region of boundedness of SA}. 
 
\item For $j = 1, 2, \dots$, the derivatives $\partial_k^js^A(k)$ and $\partial_k^j S^A(k)$ are well-defined and continuous for $k$ in \eqref{region of boundedness of sA}--\eqref{region of boundedness of SA}.

\item $s^A(k)$ and $S^A(k)$ obey the symmetries
\begin{align}\label{sym of sA and SA}
&  s^A(k) = \mathcal{A} s^A(\omega k)\mathcal{A}^{-1} = \mathcal{B} \overline{s^A(\overline{k})}\mathcal{B}, \qquad S^A(k) = \mathcal{A} S^A(\omega k)\mathcal{A}^{-1} = \mathcal{B} \overline{S^A(\overline{k})}\mathcal{B}.
\end{align}

\item $s^{A}(k)$ and $S^{A}(k)$ approach the identity matrix as $k \to \infty$. More precisely, there are matrices $\{s^{A}_j\}_1^\infty$ and $\{S^{A}_j\}_1^\infty$ such that
\begin{align*}
& \Big|\partial_k^j \Big(s^{A}(k) - I - \sum_{j=1}^N \frac{s^{A}_j}{k^j}\Big)\Big| = O(k^{-N-1}), & & k \to \infty,
\ k \in (-\omega^{2} \overline{\mathcal{S}}, -\omega \overline{\mathcal{S}}, -\overline{\mathcal{S}}), \\
& \Big|\partial_k^j \Big(S^{A}(k) - I - \sum_{j=1}^N \frac{S^{A}_j}{k^j}\Big)\Big| = O(k^{-N-1}), & & k \to \infty,
\ k \in (\omega^{2} \overline{\check{\mathcal{T}}}, \omega \overline{\check{\mathcal{T}}}, \overline{\check{\mathcal{T}}}),
\end{align*}
for $j = 0, 1, \dots, N$ and each integer $N \geq 1$. 

\item There exist matrices $\{s^{A(p)}\}_{p=-2}^{+\infty}$ and $\{S^{A(p)}\}_{p=-2}^{+\infty}$ such that, as $k \to 0$,
\begin{align}
& s^{A}(k) = \frac{s^{A(-2)}}{k^{2}} + \frac{s^{A(-1)}}{k} + s^{A(0)} + s^{A(1)}k + \ldots, & &  k \in (-\omega^{2} \overline{\mathcal{S}}, -\omega \overline{\mathcal{S}}, -\overline{\mathcal{S}}), \label{sA at 0} \\
& S^{A}(k) = \frac{S^{A(-2)}}{k^{2}} + \frac{S^{A(-1)}}{k} + S^{A(0)} + S^{A(1)}k + \ldots, & &  k \in (\omega^{2} \overline{\check{\mathcal{T}}}, \omega \overline{\check{\mathcal{T}}}, \overline{\check{\mathcal{T}}}), \label{SA at 0}
\end{align}
and the expansions can be differentiated termwise any number of times. The first matrices are given by
\begin{align}
& s^{A(-2)} =  \mathfrak{s}^{A(-2)} \begin{pmatrix}
\omega & \omega & \omega \\
\omega^{2} & \omega^{2} & \omega^{2} \\
1 & 1 & 1 
\end{pmatrix}, \label{sApm2p} \\
& s^{A(-1)} =  \mathfrak{s}^{A(-1)}_{1} \begin{pmatrix}
\omega^{2} & 1 & \omega \\
1 & \omega & \omega^{2} \\
\omega & \omega^{2} & 1 
\end{pmatrix} + \mathfrak{s}^{A(-1)}_{2} \begin{pmatrix}
0 & \omega & -1 \\
-\omega^{2} & 0 & 1 \\
\omega^{2} & -\omega & 0 
\end{pmatrix}, \label{sApm1p} \\
& s^{A(0)} = I +  \mathfrak{s}^{A(0)}_{1} \begin{pmatrix}
1 & \omega^{2} & \omega \\
\omega & 1 & \omega^{2} \\
\omega^{2} & \omega & 1 
\end{pmatrix} + \mathfrak{s}^{A(0)}_{2} \begin{pmatrix}
0 & 1 & 1 \\
1 & 0 & 1 \\
1 & 1 & 0 
\end{pmatrix} \nonumber \\
& \hspace{1.3cm}  + \mathfrak{s}^{A(0)}_{3} \begin{pmatrix}
0 & 1 & -1 \\
-1 & 0 & 1 \\
1 & -1 & 0 
\end{pmatrix}, \label{sAp0p} \\
& S^{A(-2)} =  \mathscr{S}^{A(-2)} \begin{pmatrix}
\omega & \omega & \omega \\
\omega^{2} & \omega^{2} & \omega^{2} \\
1 & 1 & 1 
\end{pmatrix}, \label{SApm2p} \\
& S^{A(-1)} =  \mathscr{S}^{A(-1)}_{1} \begin{pmatrix}
\omega^{2} & 1 & \omega \\
1 & \omega & \omega^{2} \\
\omega & \omega^{2} & 1 
\end{pmatrix} + \mathscr{S}^{A(-1)}_{2} \begin{pmatrix}
0 & \omega & -1 \\
-\omega^{2} & 0 & 1 \\
\omega^{2} & -\omega & 0 
\end{pmatrix}, \label{SApm1p} \\
& S^{A(0)} = I +  \mathscr{S}^{A(0)}_{1} \begin{pmatrix}
1 & \omega^{2} & \omega \\
\omega & 1 & \omega^{2} \\
\omega^{2} & \omega & 1 
\end{pmatrix} + \mathscr{S}^{A(0)}_{2} \begin{pmatrix}
0 & 1 & 1 \\
1 & 0 & 1 \\
1 & 1 & 0 
\end{pmatrix} \nonumber \\
& \hspace{1.3cm} + \mathscr{S}^{A(0)}_{3} \begin{pmatrix}
0 & 1 & -1 \\
-1 & 0 & 1 \\
1 & -1 & 0 
\end{pmatrix}, \label{SAp0p}
\end{align}
where
\begin{align*}
& \mathfrak{s}^{A(-2)} := -\int_{0}^{+\infty}  \big((u_{0x}+v_{0})\tilde{\delta}_{3,3}\big)|_{t=0} dx \in \mathbb{R}, \\
& \mathfrak{s}^{A(-1)}_{1} := -\int_{0}^{+\infty} \big( 2 u_{0} \tilde{\delta}_{3,3} + (u_{0x}+v_{0})\tilde{\epsilon}_{3,3} \big)|_{t=0} dx \in \R, \\
& \mathfrak{s}^{A(-1)}_{2} := -i \sqrt{3} \int_{0}^{+\infty} \big( (2 u_{0} + (u_{0x}+v_{0})x)\tilde{\delta}_{3,3} \big)|_{t=0} dx \in i\R, \\
& \mathfrak{s}^{A(0)}_{1} \in \R, \qquad \mathfrak{s}^{A(0)}_{2} \in \R, \qquad \mathfrak{s}^{A(0)}_{3} \in i\R, \\
& \mathscr{S}^{A(-2)} := \frac{1}{3} \int_{0}^{T} \big( 4\tilde{u}_{0}\tilde{\alpha}_{1} + (\tilde{u}_{1} - 3\tilde{v}_{0})\tilde{\gamma}_{3} + (\tilde{u}_{2} - 3 \tilde{u}_{0}')\tilde{\delta}_{1,3} \big)|_{x=0} dt \in \mathbb{R}, \\
& \mathscr{S}^{A(-1)}_{1} \in \mathbb{R}, \qquad \mathscr{S}^{A(-1)}_{2} \in i\mathbb{R}, \\
& \mathscr{S}^{A(0)}_{1} \in \mathbb{R}, \qquad \mathscr{S}^{A(0)}_{2} \in \mathbb{R}, \qquad \mathscr{S}^{A(0)}_{3} \in i\mathbb{R}.
\end{align*}
\item If $u_0, v_0$ have compact support, then $s^A(k)$ is defined and equals the cofactor matrix of $s(k)$ for all $k \in \C \setminus \{0\}$.

\medskip \noindent If $\tilde{u}_{0}, \tilde{u}_{1}, \tilde{u}_{2}, \tilde{v}_0$ have compact support, then $S^{A}(k)$ is defined and equals the cofactor matrix of $S(k)$ for all $k \in \C \setminus \{0\}$. 
\end{enumerate}

\end{proposition}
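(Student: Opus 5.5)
The plan is to derive every property of $s^A$ and $S^A$ from their representations $S^A(k)=\mu_1^A(0,0,k)$ and $s^A(k)=\mu_3^A(0,0,k)$ in \eqref{SAsAdef}, together with Propositions \ref{XAYAprop} and \ref{XAYAat1prop}. This mirrors the way Proposition \ref{sprop} follows from Propositions \ref{XYprop} and \ref{XYat1prop}. The first step is to check that these representations agree with the integral definitions \eqref{def of SA and sA}. For $\mu_3^A$, evaluating \eqref{muA3def intro} at $(x,t)=(0,0)$ gives exactly the formula for $s^A$. For $\mu_1^A$, we evaluate \eqref{muA1def intro} at $t=0$; since $\mu_1^A$ is defined on the whole domain, $\mu_1^A(0,0,k)$ is the integral along the boundary $x=0$, which is the formula for $S^A$.

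Properties (a)--(c) then follow from the column-wise domains of definition in Proposition \ref{XAYAprop}(a)--(c). For $s^A$ these are stated directly. For $S^A$ I would not use the entire-function property of $\mu_1^A$, since $\mu_1^A(0,0,\cdot)$ is analytic in $\C\setminus\{0\}$ anyway. The domain recorded in \eqref{region of boundedness of SA} is instead the region where the boundedness estimate \eqref{region of boundedness of mujA}(c) holds, namely $(\omega^2\overline{\check{\mathcal T}},\omega\overline{\check{\mathcal T}},\overline{\check{\mathcal T}})$. I would note that this is the region in which $S^A$ has a controlled large-$k$ expansion, whereas analyticity holds everywhere in $\C\setminus\{0\}$. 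Property (d) is inherited immediately from Proposition \ref{XAYAprop}(e) at $(0,0)$.

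For (e), I would substitute a large-$k$ expansion of $\mu^A_j$, the analogue of \eqref{mujlargekexpansion} obtained from the adjoint recursion, into \eqref{XAjdef}. Integrating by parts in the $x$- or $t$-integral, repeatedly, in the regions where the exponentials $e^{(x'-x)(l_i-l_j)}$ or $e^{(t'-t)(z_i-z_j)}$ are bounded produces the asymptotic series in $1/k$. The derivative bounds in \eqref{region of boundedness of mujA} control the remainders uniformly, which yields the stated $O(k^{-N-1})$ estimates with derivatives.

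For (f), I would insert the expansion \eqref{XAYAat1a} for $\mu_3^A(x,0,k)$ into the integral formula for $s^A$, and the one for $\mu_1^A(0,t,k)$ into the formula for $S^A$. We write $\mathsf U^T=P^T(\cdots)P^{-T}$ and expand $P(k)^{-1}$, which has poles of order up to $2$ at $k=0$, together with the exponentials $e^{\pm x\hat{\mathcal L}}$ and $e^{\pm t\hat{\mathcal Z}}$ in powers of $k$. Collecting terms gives the Laurent series, and termwise differentiation follows from the derivative bounds in \eqref{XAYAat1a}. A cleaner route to the \emph{structure} of the leading matrices is the following. $S^A(k)=\mu_1^A(0,0,k)$ with $\mu_1^A(0,T,k)=I$, so $S^A=I+\sum_l D_1^{(l)}(0,0)k^l$, and \eqref{Djminus2}--\eqref{Djminus0} directly give the forms \eqref{SApm2p}--\eqref{SAp0p}. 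Similarly $s^A=I+\sum_l D_3^{(l)}(0,0)k^l$. The symmetries in (d) force the parameters to be real or purely imaginary as claimed. Only the explicit integral formulas for $\mathfrak s^{A(-2)}$, $\mathfrak s^{A(-1)}_{1,2}$ and $\mathscr S^{A(-2)}$ require the direct computation from \eqref{def of SA and sA}. There we keep the leading singular part of $P^{-1}$ and of $\mu^A$ (involving $\tilde\alpha_j,\tilde\gamma_j,\tilde\delta_{j,3},\tilde\epsilon_{3,3}$) and use $\tilde v_1=\tilde u_0'$ from $u_t=v_x$.

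For (g), when $u_0,v_0$ are compactly supported, Proposition \ref{XAYAprop}(f) gives $\mu_3^A=(\mu_3^{-1})^T$, so $s^A=(s^{-1})^T$. Since $\det s=1$ by Proposition \ref{sprop}(g), $(s^{-1})^T$ is the cofactor matrix of $s$. The statement for $S^A$ follows in the same way from $\mu_1^A=(\mu_1^{-1})^T$ and $\det S=1$; compact support of the boundary data is not even needed beyond what is stated. The main obstacle I expect is the bookkeeping in (f): the $k^{-2}$ and $k^{-1}$ singularities of $P^{-1}$ interact with those of $D_j^{(l)}$ and with the Taylor expansion of the exponentials. One must verify that the product $P^T(\cdot)P^{-T}$ combined with the expansions of $\mu_3^A$ only produces the rank-structured matrices listed, and must track the extra factor $x$ appearing in $\mathfrak s^{A(-1)}_2$, which comes from expanding $e^{x\hat{\mathcal L}}$ to first order.
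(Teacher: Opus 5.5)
Your proposal is correct and follows essentially the same route as the paper. The paper simply states that Proposition \ref{sAprop} is a consequence of $S^A(k)=\mu_1^A(0,0,k)$, $s^A(k)=\mu_3^A(0,0,k)$ and the properties of the adjoint eigenfunctions in Propositions \ref{XAYAprop} and \ref{XAYAat1prop}, with (g) coming from $\mu_j^A=(\mu_j^{-1})^T$ and $\det s=\det S=1$, which is exactly your plan. Two points are only cosmetic: (e) follows more directly by evaluating the uniform large-$k$ expansion of $\mu_j^A$ at $(0,0)$ than by re-integrating by parts, and matching \eqref{Djminus1}--\eqref{Djminus0} to \eqref{SApm1p}--\eqref{SAp0p} needs a trivial change of basis within the same span.
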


\subsection{Proof of Theorem \ref{r1r2th}}\label{r1r2subsec}
The theorem follows from Propositions \ref{sprop} and \ref{sAprop}. Indeed, recall from (\ref{r1r2def}) that
\begin{align*}
& r_1(k) = \frac{(S^{-1}s)_{12}}{(S^{-1}s)_{11}}, & & r_2(k) = \frac{s^A_{12}S^{A}_{33}-s^{A}_{32}S^{A}_{13}}{s^A_{11}S^{A}_{33}-s^{A}_{31}S^{A}_{13}}, \\
& r_{3}(k) = \frac{S^{A}_{31}}{s^{A}_{33}} \frac{1}{(S^{-1}s)_{11}}, & & r_{4}(k) = \frac{S^{A}_{31}}{s^{A}_{33}} \frac{s_{21}}{s^{A}_{33}S^{A}_{11}-s^{A}_{13}S^{A}_{31}},
\end{align*}
where $S^{-1}$ is defined as the transpose of $S^{A}$. Statements $(a)$ and $(c)$ of Propositions \ref{sprop} and \ref{sAprop} imply that 
$r_{j}(k)$, $j=1,2,3,4$, are smooth on their domain of definitions, except at possible values of $k$ where a denominator vanishes. By Assumption \ref{solitonlessassumption}, the denominators of $r_{1}, r_{3}$, and $r_{4}$ do not vanish. Also, the symmetries \eqref{sym of sA and SA} imply
\begin{align*}
(s^{A}_{33}S^{A}_{11}-s^{A}_{13}S^{A}_{31})(k) = \overline{(s^A_{11}S^{A}_{33}-s^{A}_{31}S^{A}_{13})(\overline{\omega k})},
\end{align*}
and thus the denominator of $r_{2}$ also does not vanish. Thus the functions $r_{j}$ are smooth on their domain of definition, which proves $(i)$.
Moreover, statement $(e)$ of the same propositions imply that $r_1(k)$ and $r_2(k)$ satisfy (\ref{r1r2r3r4 slow decay}) with
\begin{align*}
r_{1}^{(1)} = s_{1,12}+S^{A}_{1,21}, \qquad r_{2}^{(1)} = s^{A}_{1,12}, \qquad r_{3}^{(1)} = S^{A}_{1,31}, \qquad r_{4}^{(2)} = s_{1,21}S^{A}_{1,31},
\end{align*}
and combining the above with the symmetries \eqref{sym of s and S} and \eqref{sym of sA and SA}, we find \eqref{lol3}. This proves $(iv)$. 
 
Assumption \ref{originassumption} implies that the coefficients $\mathfrak{s}^{(-2)}, \mathscr{S}^{(-2)}, \mathfrak{s}^{A(-2)}, \mathscr{S}^{A(-2)}, \mathscr{S}^{A(-1)}_{1}$ and $\mathscr{S}^{A(-1)}_{2}$ in (\ref{spm2p}), \eqref{Spm2p}, (\ref{sApm2p}), \eqref{SApm2p} and \eqref{SApm1p} are all nonzero. 
Hence properties $(ii)$ and $(iii)$ of Theorem \ref{r1r2th} related to the behavior of $r_j(k)$, $j=1,2,3,4$ as $k \to 0$ follow from a long but direct computation using statement $(f)$ of Propositions \ref{sprop} and \ref{sAprop}.

It remains to prove that $|r_{1}(k)|<1$ for $k > 0$ and that $|r_{2}(k)| <1$ for $k < 0$. By \eqref{global rel}, for $k>0$, all four entries of $\{(S^{-1}s)_{ij}\}_{i,j=1}^{2}$ are well-defined, and hence $(S^{-1}s)_{11}(k)(S^{-1}s)_{22}(k)-(S^{-1}s)_{12}(k)(S^{-1}s)_{21}(k) = (S^{T}s^{A})_{33}(k)$. Using also the symmetries \eqref{sym of s and S} and \eqref{sym of sA and SA}, we get
\begin{align}
1-|r_{1}(k)|^{2} & = 1 - \bigg| \frac{(S^{-1}s)_{12}(k)}{(S^{-1}s)_{11}(k)} \bigg|^{2} = \frac{(S^{-1}s)_{11}(k)(S^{-1}s)_{22}(k)-(S^{-1}s)_{12}(k)(S^{-1}s)_{21}(k)}{|(S^{-1}s)_{11}(k)|^{2}} \nonumber \\
& = \frac{(S^{T}s^{A})_{33}(k)}{|(S^{-1}s)_{11}(k)|^{2}}. \label{lol2 bis}
\end{align}
Since the left-hand side of \eqref{lol2 bis} is real and tends to $1$ as $k \to +\infty$, and since the right-hand side is non-zero for all $k > 0$ by assumption, we deduce that $1-|r_{1}(k)|^{2}>0$ for all $k > 0$.

By \eqref{region of boundedness of s} and \eqref{region of boundedness of s}, the first two columns of $s^{A}(k)$ and the third column of $s(k)$ are well-defined for $k<0$. Thus, for $k<0$,
\begin{align*}
& s^{A}_{21}s^{A}_{32}-s^{A}_{22}s^{A}_{31} = s_{13}, \\
& s^{A}_{12}s^{A}_{31}-s^{A}_{11}s^{A}_{32} = s_{23}, \\
& s^{A}_{11}s^{A}_{22}-s^{A}_{21}s^{A}_{32} = s_{33},
\end{align*}
and by combining these relations with \eqref{sym of s and S} and \eqref{sym of sA and SA}, we obtain
\begin{align}
1-|r_{2}(k)|^{2} & = 1 - \frac{(s^A_{12}S^{A}_{33}-s^{A}_{32}S^{A}_{13})(s^A_{21}S^{A}_{33}-s^{A}_{31}S^{A}_{23})}{(s^A_{11}S^{A}_{33}-s^{A}_{31}S^{A}_{13})(s^A_{22}S^{A}_{33}-s^{A}_{32}S^{A}_{23})}(k) \nonumber \\
& = \frac{S^{A}_{33}(s_{13}S^{A}_{13}+s_{23}S^{A}_{23}+s_{33}S^{A}_{33})}{|s^A_{11}S^{A}_{33}-s^{A}_{31}S^{A}_{13}|^{2}}(k) = \frac{S^{A}_{33}(S^{-1}s)_{33}}{|s^A_{11}S^{A}_{33}-s^{A}_{31}S^{A}_{13}|^{2}}(k) \nonumber \\
& = \frac{S^{A}_{33}(k)(S^{-1}s)_{11}(\omega^{2}k)}{|s^A_{11}S^{A}_{33}-s^{A}_{31}S^{A}_{13}|^{2}(k)} \label{lol2}
\end{align}
for $k < 0$. Since the left-hand side of \eqref{lol2} is real and tends to $1$ as $k \to -\infty$, and since the right-hand side is non-zero for all $k < 0$ by Assumption \ref{solitonlessassumption} and the assumption made in Theorem \ref{r1r2th} $(v)$, we deduce that $1-|r_{2}(k)|^{2}>0$ for all $k < 0$.

\section{The function $M$}
For each $n = 1, \dots, 12$, we define a solution $M_n(x,t,k)$ of (\ref{Xlax}) by the following system of Fredholm integral equations: 
\begin{equation}\label{Mndef}
(M_n)_{ij}(x,t,k) = \delta_{ij} + \int_{\gamma_{ij}^n} \left(e^{\hat{\mathcal{L}}x + \hat{\mathcal{Z}}t} W(x',t',k)\right)_{ij}, \qquad k \in D_n, \quad i,j = 1, 2,3,
\end{equation}
where the contours $\gamma^n_{ij}$, $n = 1, \dots, 12$, $i,j = 1, 2,3$, are defined by
 \begin{align} \label{gammaijnudef}
 \gamma_{ij}^n =  \begin{cases}
 \gamma_1,  \qquad \text{Re}\, l_i(k) < \text{Re}\, l_j(k), \quad \text{Re}\, z_i(k) \geq \text{Re}\, z_j(k),
	\\
\gamma_2,  \qquad \text{Re}\, l_i(k) < \text{Re}\, l_j(k),\quad \text{Re}\, z_i(k) < \text{Re}\, z_j(k),
	\\
\gamma_3,  \qquad \text{Re}\, l_i(k) \geq \text{Re}\, l_j(k),
	\\
\end{cases} \quad \text{for} \quad k \in D_n,
\end{align}
and $W$ is given by (\ref{Wdef}) with $\mu$ replaced by $M_n$. 

As the next proposition shows, this makes all entries of $M_n$ well-defined for $k \in \bar{D}_n\setminus \mathcal{Q}$, where 
\begin{align}\label{calQdef}
  \mathcal{Q} = \{0\} \cup \mathsf{Z}
\end{align}
and $\mathsf{Z}$ denotes the set of zeros of the Fredholm determinants associated with (\ref{Mndef}). 

\begin{proposition}[Basic properties of $M_n$]\label{Mnprop}
Suppose $u_0, v_0 \in \mathcal{S}(\R_{+})$ and $\tilde{u}_{0}, \tilde{u}_{1}, \tilde{u}_{2}, \tilde{v}_0 \in C^{\infty}([0,T])$. 
Then the equations (\ref{Mndef}) uniquely define six $3 \times 3$-matrix valued solutions $\{M_n\}_1^6$ of (\ref{Xlax}) with the following properties:
\begin{enumerate}[$(a)$]
\item The function $M_n(x,t, k)$ is defined for $x \geq 0$, $t\in [0,T]$ and $k \in \bar{D}_n \setminus \mathcal{Q}$. For each $k \in \bar{D}_n  \setminus \mathcal{Q}$ and $t\in [0,T]$, $M_n(\cdot,t, k)$ is smooth and satisfies the $x$-part of (\ref{Xlax}). For each $k \in \bar{D}_n  \setminus \mathcal{Q}$ and $x \geq 0$, $M_n(x,\cdot, k)$ is smooth and satisfies the $t$-part of (\ref{Xlax}). 

\item For each $x \geq 0$ and $t\in [0,T]$, the function $M_n(x,t,\cdot)$ is continuous for $k \in \bar{D}_n \setminus \mathcal{Q}$ and analytic for $k \in D_n\setminus \mathcal{Q}$.

\item For each $\epsilon > 0$, there exists a $C = C(\epsilon)$ such that
\begin{align}\label{Mnbounded}
|M_n(x,t,k)| \leq C, \qquad x \geq 0, \; t\in[0,T], \ k \in \bar{D}_n, \ \mbox{dist}(k, \mathcal{Q}) \geq \epsilon.
\end{align}

\item For each $x \geq 0$, $t\in [0,T]$ and each $j = 1, 2, \dots$, the partial derivative $\frac{\partial^j M_n}{\partial k^j}(x,t, \cdot)$ has a continuous extension to $\bar{D}_n \setminus \mathcal{Q}$.

\item $\det M_n(x,t,k) = 1$ for $x \geq 0$, $t\in [0,T]$ and $k \in \bar{D}_n \setminus \mathcal{Q}$.

\item For each $x \geq 0$ and $t\in [0,T]$, the sectionally analytic function $M(x,t,k)$ defined by $M(x,t,k) = M_n(x,t,k)$ for $k \in D_n$ satisfies the symmetries
\begin{align}\label{Msymm}
 M(x,t, k) = \mathcal{A} M(x,t,\omega k)\mathcal{A}^{-1} = \mathcal{B} \overline{M(x,t,\overline{k})}\mathcal{B}, \qquad k \in \C \setminus \mathcal{Q}.
\end{align}

\end{enumerate}
\end{proposition}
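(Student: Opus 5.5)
The proof will follow the scheme of \cite{L3x3, L2013}, adapted as in \cite[Proposition 4.1]{CLgoodboussinesq}. I first fix $n$ and $k\in \bar D_n\setminus\mathcal{Q}$ and rewrite (\ref{Mndef}) column by column. In that form it is a Fredholm system of integral equations along the contours $\gamma^n_{ij}$. The rule (\ref{gammaijnudef}) is designed exactly so that, for $(x',t')$ on $\gamma^n_{ij}$, every exponential $e^{(l_i-l_j)(x-x')+(z_i-z_j)(t-t')}$ has modulus at most $1$, by the inequalities (\ref{contourinequalities}). The kernels are therefore bounded. Because $\mathsf U$ has rapid decay in $x$ and $T<\infty$, the integral operator is compact on the Banach space of bounded continuous functions of $(x,t)\in\R_+\times[0,T]$. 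The Fredholm alternative then gives a unique solution whenever the Fredholm determinant is nonzero, that is, for $k\notin\mathsf Z$. Away from $k=0$, where $P(k)$ is invertible, the kernels depend analytically on $k$ in $D_n$ and continuously on $\bar D_n$. The resolvent is analytic and continuous where it exists, and this gives (a) and (b) together with the uniform bound (c) on $\{\mathrm{dist}(k,\mathcal Q)\ge\epsilon\}$. To get (c) I need the resolvent bound to be uniform as $|k|\to\infty$. For this I will compare with the Volterra eigenfunctions $\mu_j$: for large $|k|$, Neumann-series estimates as in Proposition \ref{XYprop} make the operator norm small after conjugation.

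Next I have to show that $M_n$ actually solves (\ref{Xlax}), both its $x$-part and its $t$-part. Differentiating (\ref{Mndef}) in $x$ shows that $M_n$ satisfies the $x$-part. For the $t$-part I will use that $W$ is a closed one-form. This relies on the compatibility of (\ref{Xlax}), i.e.\ on $\{u,v\}$ solving (\ref{boussinesqsystem}). Closedness makes each integral independent of the path, as long as the endpoint is kept. The definition therefore coincides with the integral of $W$ along paths from $(x_j,t_j)$ in either direction, and differentiating in $t$ gives the $t$-part. Smoothness in $(x,t)$ follows from the smoothness of $u,v$. Statement (d) follows by differentiating the integral equations in $k$. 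The $k$-derivatives of the kernels pull down polynomial factors $(x-x')$ and $(t-t')$. These are harmless because $t-t'$ is bounded, $\mathsf U$ decays rapidly in $x$, and $\partial_k$ of the resolvent exists off $\mathsf Z$.

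For (e), I note that $\mathrm{tr}\,\mathsf U=\mathrm{tr}\,\mathsf V=0$, so $\det M_n$ is independent of $(x,t)$. I then evaluate $\det M_n$ in a suitable limit. As $x\to\infty$, every entry integrated along $\gamma_3$ tends to its normalization, and the remaining entries are bounded multiples of decaying exponentials along appropriate directions. The route I would take first is the large-$k$ limit: there $M_n\to I$ by the same arguments as for the expansion (\ref{mujlargekexpansion}). Since $\det M_n$ is independent of $(x,t)$ and analytic in $k$, it is natural to determine it this way, but letting $k\to\infty$ does not by itself show that the determinant equals $1$ at finite $k$. The argument I would therefore try to complete is to show that $\det M_n$, as an analytic function of $k$, is identically $1$ throughout $D_n$. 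The plan is to show that $\det M_n$ is independent of $k$ and then use the large-$k$ limit. Concretely, I expect to write $M_n=\mu_{j}e^{x\hat{\mathcal L}+t\hat{\mathcal Z}}\,S_n(k)$ with $x,t$-independent $S_n$, and to check that $\det S_n=1$ via the triangular-type structure of the $S_n$. This computation is unverified and is where I expect most of the difficulty for (e). Finally, for (f) I use that $\mathcal A$ permutes $(l_1,l_2,l_3)$ and $(z_1,z_2,z_3)$ under $k\mapsto\omega k$, while $\mathcal B$ swaps $l_1,l_2$ and $z_1,z_2$ under conjugation. Both maps therefore send $D_n$ onto some $D_{n'}$ and transport the contour choices $\gamma^n_{ij}$ consistently. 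Since $\mathsf U,\mathsf V$ have the matching symmetries inherited from $P(k)$, uniqueness of Fredholm solutions gives (\ref{Msymm}).

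The main obstacle is (c), the uniform boundedness up to $\partial D_n$ and at infinity. The hardest part is ruling out blow-up of the resolvent as $|k|\to\infty$. I would try to handle this by using the explicit representation of $M_n$ through $\mu_1,\mu_2,\mu_3$ and the spectral functions, in the spirit of the Lemma \ref{M1XYlemma} mentioned earlier, combined with the large-$k$ estimates of Propositions \ref{XYprop} and \ref{sprop}.
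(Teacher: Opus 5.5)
The paper gives no proof of Proposition~\ref{Mnprop}. It is taken over from the machinery of \cite{L3x3, L2013} (cf.\ \cite{CLgoodboussinesq}), and your outline follows that machinery in broad strokes: contours chosen so that all exponentials are bounded, symmetries by uniqueness, and $\mathrm{tr}\,\mathsf U=0$. However, (e), which you leave open, is a genuine gap, and the route you propose does not close it. The zero pattern of $S_n=M_n(0,0,k)$ says nothing about $\det S_n$: $S_n$ vanishes only where $\gamma^n_{ij}=\gamma_2$, and its diagonal is not normalized. For example, $S_1$ vanishes only at $(1,3)$ and $(2,3)$, and its first column is $[s]_1$. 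What gives determinant one is the factorization $S_n=s\,T_n$, with $\det s=1$ and $T_n$ unipotent triangular after reordering (because $(T_n)_{ij}=\delta_{ij}$ whenever $\re l_i\ge\re l_j$). But this needs all columns of $s$ and $\mu_3$ on $\bar D_n$, i.e.\ compactly supported data followed by an approximation argument.

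The direct fix is the $x\to\infty$ limit you mention and then drop. Fix $t$ and $k\in\bar D_n\setminus\mathcal Q$; since $\mathrm{tr}\,\mathsf U=0$, $\det M_n$ does not depend on $x$. Take any $(i,j)$ with $\gamma^n_{ij}=\gamma_3$, i.e.\ $\re l_i\ge\re l_j$, which includes the diagonal. Then $|e^{(l_i-l_j)(x-x')}|\le 1$ for $x'\ge x$, $M_n(\cdot,t,k)$ is bounded and $\mathsf U(\cdot,t,k)$ is integrable, so $(M_n)_{ij}(x,t,k)\to\delta_{ij}$ as $x\to\infty$. All other entries stay bounded. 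Ordering the indices by increasing $\re l_j$, $M_n$ becomes asymptotically unipotent upper triangular. Every non-identity term of the Leibniz expansion then tends to $0$, so $\det M_n\equiv 1$. No large-$k$ information and no factorization are needed.

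The existence part also needs repair.

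\emph{Compactness.} The operator in (\ref{Mndef}) is in general not compact on $C_b(\R_+\times[0,T])$. The $\gamma_3$ entries and the horizontal legs of $\gamma_1,\gamma_2$ act fibrewise in $t$, so inputs $g(t)E$, with $g$ ranging over the unit ball of $C[0,T]$, have non-precompact images. The Fredholm alternative must therefore be applied to a reduced problem, e.g.\ in $x$ at fixed $t$, with the contributions from $x=0$ expressed through $\mu_1(0,\cdot,k)$, $\mu_2(0,\cdot,k)$ and finitely many constants.

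\emph{The $t$-part.} Your derivation is circular. If $M_n$ is only known to satisfy the $x$-part, the zero-curvature condition gives $dW=-\tilde{\mathsf U}\tilde N\,dx\wedge dt$. Here $\tilde{\mathsf U}=e^{-x\hat{\mathcal L}-t\hat{\mathcal Z}}\mathsf U$ and $\tilde N=e^{-x\hat{\mathcal L}-t\hat{\mathcal Z}}(\partial_tM_n-[\mathcal Z,M_n]-\mathsf VM_n)$. So $W$ is closed only once the $t$-part is already known. You need a separate argument that $N\equiv 0$. Alternatively, construct $M_n$ from the outset in the form $\mu_2e^{x\hat{\mathcal L}+t\hat{\mathcal Z}}S_n(k)$, which solves both parts automatically.

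\emph{The bound in (c).} The uniform bound as $|k|\to\infty$ is only announced, and the representation of Lemma~\ref{M1XYlemma} cannot deliver it term by term. On $D_1$ its second column contains $[\mu_2]_3e^{\theta_{32}}$ with $\re\theta_{32}>0$, which grows exponentially in $x$. Moreover, $[\mu_2]_1$ and $[\mu_2]_2$ lie outside their regions of boundedness there. Any bound must therefore come from cancellations or from a different representation.
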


\begin{lemma}[Asymptotics of $M$ as $k \to \infty$]\label{Matinftylemma}
Suppose $u_0, v_0 \in \mathcal{S}(\R_{+})$ and $\tilde{u}_{0}, \tilde{u}_{1}, \tilde{u}_{2}, \tilde{v}_0 \in C^{\infty}([0,T])$. For any integer $p \geq 1$, there exist $R > 0$ and $C>0$ such that
\begin{align*}
& \bigg|M(x,t,k) - \bigg(I + \frac{\mu_{3}^{(1)}(x,t)}{k} + \cdots + \frac{\mu_{3}^{(p)}(x,t)}{k^{p}}\bigg) \bigg| \leq
\frac{C}{|k|^{p+1}}, \quad x \geq 0, \, t\in [0,T], \   |k| \geq R,
\end{align*}
with $k \in \C \setminus \Gamma$. The coefficients $\mu_{3}^{(1)}$ and $\mu_{3}^{(2)}$ were computed explicitly in Proposition \ref{prop:first two coeff at infty}.
\end{lemma}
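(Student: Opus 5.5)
The plan is to follow the standard argument for Fredholm-type eigenfunctions in the unified transform setting, as in \cite{L3x3, L2013} and \cite[Lemma 3.6]{CLgoodboussinesq}. By the symmetries (\ref{Msymm}), $M(x,t,k) = \mathcal{A} M(x,t,\omega k)\mathcal{A}^{-1} = \mathcal{B}\overline{M(x,t,\bar k)}\mathcal{B}$. The sectors $D_1,\dots,D_{12}$ are permuted by $k\mapsto \omega k$ and $k\mapsto \bar k$. The expansion coefficients $\mu_3^{(l)}$ inherit the same symmetries from $\mu_3$ (Proposition \ref{XYprop}(e)). It therefore suffices to prove the estimate for $k\in \bar D_1$ and $k \in \bar D_2$ with $|k|\geq R$. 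Since $\mathcal{Q}$ is a closed set, and I expect the Fredholm zeros $\mathsf{Z}$ to be bounded (this comes out of the construction below), $R$ can be chosen so that $\{|k|\geq R\}\cap\mathcal{Q}=\emptyset$.

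\textbf{Step 1: formal expansion.} Fix $n$ and $k \in \bar D_n$. Consider the formal power series $\sum_{l\ge 0}\mu^{(l)}_{M_n}k^{-l}$ generated by the recursion (\ref{xrecursive}). Here each entry $(i,j)$ is normalized at the starting point of the contour $\gamma^n_{ij}$ in (\ref{gammaijnudef}). The off-diagonal parts are fixed algebraically by the first line of (\ref{xrecursive}), since $[J,\cdot]$ is invertible on off-diagonal matrices. Only the diagonal entries are obtained by integration. For diagonal entries $i=j$, we have $\mathrm{Re}\, l_i=\mathrm{Re}\, l_j$, so $\gamma^n_{ii}=\gamma_3$, normalized at $x=+\infty$. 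Hence the diagonal normalizations coincide with those of $\mu_3$, and the formal series of $M_n$ coincides term by term with $\sum_l \mu_3^{(l)}k^{-l}$.

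\textbf{Step 2: validity of the expansion.} Truncate the series at order $p+N$ (with some extra margin $N$) to get $M^{\mathrm{app}}$. Inserting $M^{\mathrm{app}}$ into the integral equations (\ref{Mndef}) produces a residual of size $O(k^{-p-1})$, uniformly in $x\ge0$, $t\in[0,T]$. The argument is to integrate by parts in $x'$ and $t'$ along each $\gamma^n_{ij}$, using the boundedness of the exponentials $e^{\theta_{ij}}$ on $\bar D_n$ guaranteed by the choice of contours. The boundary terms at the endpoints $(0,T)$, $(0,0)$ and $(+\infty,t)$ cancel exactly because of the normalizations chosen in Step 1. Then I will write the error $E=M_n-M^{\mathrm{app}}$ as the solution of the same Fredholm system with an $O(k^{-p-1})$ forcing term. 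The integral operator in (\ref{Mndef}) has kernel involving $\mathsf{U}=O(k^{-1})$ and $\mathsf{V}$. The $\mathsf V$ part is not small, but it acts only on the finite interval $[0,T]$ and on the boundary values. I expect to control it either by a Neumann-series argument after conjugating with $\mu_1(0,\cdot,k)$ and $\mu_2(0,\cdot,k)$, or by converting the $t$-integrals into $x$-integrals, as is done in \cite{L2013}.

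\textbf{Main obstacle.} The hardest point is showing that the Fredholm operator is invertible with uniformly bounded inverse for $|k|$ large. This is equivalent to the Fredholm determinants being bounded away from zero at infinity, which simultaneously shows that $\mathsf{Z}$ is bounded. My first attempt is to use the representation of $M_n$ through $\mu_1,\mu_2,\mu_3$ and the spectral functions $S,s,S^A,s^A$, in the spirit of Lemma \ref{M1XYlemma}. The entries of those spectral functions tend to $I$ as $k\to\infty$ by Propositions \ref{sprop}(e) and \ref{sAprop}(e), so the relevant determinants tend to $1$. Combining this with the bounds (\ref{region of boundedness of muj}) and the large-$k$ expansions (\ref{mujlargekexpansion}) of $\mu_j$, together with Step 1, would then give the claimed estimate with constants uniform in $(x,t)$.
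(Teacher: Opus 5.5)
The paper states this lemma without proof, so there is no argument of the authors to compare with. Your symmetry reduction and Step~1 are fine: the formal series of $M_n$ coincides with that of $\mu_3$, since all diagonal entries use $\gamma_3$. But Step~2 contains a false claim, and it sits exactly where the statement itself breaks down for $p\ge 2$.

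You assert that the boundary terms at $(0,T)$ and $(0,0)$ cancel because of the normalizations of Step~1. The only free normalizations in (\ref{xrecursive}) are those of the diagonal entries. The off-diagonal coefficients are forced algebraically through $[J,\cdot\,]^{-1}$, and they do not vanish at the corners: by Proposition~\ref{prop:first two coeff at infty}, $(\mu_3^{(2)})_{12}=-(\mu_3^{(2)})_{13}=\frac{2u}{3(1-\omega)}$. On the other hand, for $k\in D_1$ one has $\gamma^1_{12}=\gamma_1$ and $\gamma^1_{13}=\gamma^1_{23}=\gamma_2$, so $(M_1)_{12}(0,T,k)\equiv 0$ and $(M_1)_{13}(0,0,k)\equiv 0$.

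Integrating by parts along $\gamma_1$ therefore leaves, up to $O(|k|^{-3})$, the corner term $e^{(l_1-l_2)x+(z_1-z_2)(t-T)}\,\mathsf V^{(0)}_{12}(0,T)/(z_1-z_2)$, where $\mathsf V^{(0)}_{12}=\frac{2\omega}{3}u$. The choice of contours makes this term bounded on $\bar D_1$, but it is of exact order $|k|^{-2}$, and no Fredholm estimate can remove it. Concretely, at $(x,t)=(0,T)$ the $(1,2)$ entry of the claimed inequality with $p=2$ reads $\frac{2|\tilde u_0(T)|}{3\sqrt3}|k|^{-2}\le C|k|^{-3}$ for all large $k\in D_1$. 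This forces $\tilde u_0(T)=0$, and the $(1,3)$ entry at $(0,0)$ likewise forces $u_0(0)=0$.

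Worse, on $\Gamma_1=(0,\infty)$ both $l_1-l_2=i\sqrt3\,k$ and $z_1-z_2=-i\sqrt3\,k^2$ are purely imaginary. So the corner term keeps modulus close to $\frac{2|\tilde u_0(T)|}{3\sqrt3|k|^2}$ for every $(x,t)$ as $k\in D_1$ approaches $\Gamma_1$. It is precisely the term producing the jump $(M_1-M_{12})_{12}\approx -r_1e^{-\theta_{21}}$; when $\tilde u_0(T)\neq0$ this jump is of exact order $k^{-2}$, with the oscillating factor $e^{(z_2-z_1)T}$.

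Hence the estimate, uniform in $x\ge 0$, $t\in[0,T]$, $k\in\C\setminus\Gamma$, fails for $p\ge 2$ unless the boundary data vanish at the corners. What your scheme can deliver uniformly is $p=1$, since all corner terms are $O(|k|^{-2})$; this is all that (\ref{recoveruv}) needs. For $p\ge2$ you have two options: carry the exponentially weighted corner terms in the expansion, or restrict to closed subsectors strictly inside the $D_n$ with $(x,t)$ away from the corners.

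Independently, the uniform invertibility you call the main obstacle is left open, and the suggested shortcut via Lemma~\ref{M1XYlemma} does not work as stated. In $\bar D_1$ several ingredients of that representation lie outside the regions where Propositions~\ref{XYprop} and~\ref{sAprop} provide bounds or expansions. For instance, the first column of $\mu_2$ is only controlled on $\omega^2\bar{\mathcal T}$. Also, $S^A_{31}=e^{(z_3-z_1)T}(\mu_2)_{13}(0,T,k)$ is in general exponentially large in $D_1$. So ``the spectral functions tend to $I$'' gives no uniform control there; the formula yields bounds only through cancellations that you would have to exhibit.
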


\begin{lemma}[Relation between $M_n$ and $\mu_{j}$]\label{Snexplicitlemma}
Suppose the functions $u_0, v_0 \in \mathcal{S}(\R_{+})$ and $\tilde{u}_{0}, \tilde{u}_{1}, \tilde{u}_{2}, \tilde{v}_0 \in C^{\infty}([0,T])$ have compact support. Then
\begin{align*}
M_n(x,t,k) & = \mu_{1}(x,t, k) e^{x\hat{\mathcal{L}}(k)+t\hat{\mathcal{Z}}(k)} R_n(k) \\
& = \mu_{2}(x,t, k) e^{x\hat{\mathcal{L}}(k)+t\hat{\mathcal{Z}}(k)} S_n(k) \\
& = \mu_{3}(x,t, k) e^{x\hat{\mathcal{L}}(k)+t\hat{\mathcal{Z}}(k)} T_n(k), \quad x \geq 0, \; t\in [0,T], \ k \in \bar{D}_n\setminus \mathcal{Q}, \ n = 1, \dots, 12,
\end{align*}
where $S_n(k)=s(k)T_n(k)$, $R_n(k)=S(k)^{-1}s(k)T_{n}(k)=S(k)^{-1}S_{n}(k)$, and the $T_{n}(k)$ for $n\in \{1,2,6,7,12\}$ are given by
\begin{align}
&  T_1(k) = \begin{pmatrix}
  1 & -\frac{(S^{-1}s)_{12}}{(S^{-1}s)_{11}} & \frac{s^{A}_{31}}{s^{A}_{33}} \\[0.2cm]
 0 & 1 & \frac{s^{A}_{32}}{s^{A}_{33}} \\
 0 & 0 & 1
   \end{pmatrix},
\qquad
  T_2(k) =  \begin{pmatrix}
  1 & -\frac{(S^{-1}s)_{12}}{(S^{-1}s)_{11}} & \frac{s^{A}_{31}S^{A}_{11}-s^{A}_{11}S^{A}_{31}}{s^{A}_{33}S^{A}_{11}-s^{A}_{13}S^{A}_{31}} \\[0.2cm]
 0 & 1 & \frac{s^{A}_{32}S^{A}_{11}-s^{A}_{12}S^{A}_{31}}{s^{A}_{33}S^{A}_{11}-s^{A}_{13}S^{A}_{31}} \\
 0 & 0 & 1
   \end{pmatrix}, \nonumber \\
& T_{12}(k) = \begin{pmatrix}
1 & 0 & \frac{s^{A}_{31}}{s^{A}_{33}}, \\[0.2cm]
- \frac{(S^{-1}s)_{21}}{(S^{-1}s)_{22}} & 1 & \frac{s^{A}_{32}}{s^{A}_{33}} \\[0.2cm]
0 & 0 & 1
\end{pmatrix}, \quad T_{6}(k) = \begin{pmatrix}
1 & \frac{s^{A}_{21}S^{A}_{33}-s^{A}_{31}S^{A}_{23}}{s^{A}_{22}S^{A}_{33}-s^{A}_{32}S^{A}_{23}} & 0 \\[0.2cm] 
0 & 1 & 0 \\
- \frac{(S^{-1}s)_{31}}{(S^{-1}s)_{33}} & \frac{s^{A}_{23}S^{A}_{33}-s^{A}_{33}S^{A}_{23}}{s^{A}_{22}S^{A}_{33}-s^{A}_{32}S^{A}_{23}} & 1
\end{pmatrix}, \nonumber \\
& T_{7}(k) = \begin{pmatrix}
1 & 0 & 0 \\
\frac{s^{A}_{12}S^{A}_{33}-s^{A}_{32}S^{A}_{13}}{s^{A}_{11}S^{A}_{33}-s^{A}_{31}S^{A}_{13}} & 1 & 0 \\[0.2cm]
\frac{s^{A}_{13}S^{A}_{33}-s^{A}_{33}S^{A}_{13}}{s^{A}_{11}S^{A}_{33}-s^{A}_{31}S^{A}_{13}} & -\frac{(S^{-1}s)_{32}}{(S^{-1}s)_{33}} & 1
\end{pmatrix}. \label{SnTnexplicit}
\end{align}
For the other values of $n$, $T_{n}$ can be obtained from \eqref{SnTnexplicit} using the $\mathcal{A}$- and $\mathcal{B}$-symmetries.
\end{lemma}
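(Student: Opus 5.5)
Since $\mu_1,\mu_2,\mu_3$ and $M_n$ all solve the Lax pair \eqref{Xlax} (Propositions \ref{XYprop} and \ref{Mnprop}), and in the compactly supported case all are defined for $k\in\bar D_n\setminus\mathcal{Q}$, the functions $\mu_j e^{x\mathcal{L}+t\mathcal{Z}}$ and $M_n e^{x\mathcal{L}+t\mathcal{Z}}$ are fundamental solutions of $\psi_x=L\psi$, $\psi_t=Z\psi$. Hence there are $(x,t)$-independent matrices $R_n,S_n,T_n$ with
\[
M_n=\mu_1 e^{x\hat{\mathcal{L}}+t\hat{\mathcal{Z}}}R_n=\mu_2 e^{x\hat{\mathcal{L}}+t\hat{\mathcal{Z}}}S_n=\mu_3 e^{x\hat{\mathcal{L}}+t\hat{\mathcal{Z}}}T_n .
\]
Using \eqref{mu3mu2mu1sS} (valid for all $k\neq0$ by Proposition \ref{sprop}(g)), we get $S_n=sT_n$ and $R_n=S^{-1}S_n=S^{-1}sT_n$. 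It therefore remains to compute $T_n$ for $n\in\{1,2,6,7,12\}$. The remaining $n$ then follow from the symmetries \eqref{Msymm}, the symmetries of $\mu_j$ in Proposition \ref{XYprop}(e), and \eqref{sym of s and S}, \eqref{sym of sA and SA}.

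To determine $T_n$, I evaluate the three representations at the base points of $\gamma_1,\gamma_2,\gamma_3$, using the integral equations \eqref{Mndef}. Entries $(M_n)_{ij}$ with $\gamma^n_{ij}=\gamma_3$ satisfy $(M_n)_{ij}\to\delta_{ij}$ as $x\to\infty$, and $\mu_3\to I$ there. Entries with $\gamma^n_{ij}=\gamma_2$ satisfy $(M_n(0,0))_{ij}=\delta_{ij}$, i.e. $(S_n)_{ij}=\delta_{ij}$. Entries with $\gamma^n_{ij}=\gamma_1$ satisfy $(M_n(0,T))_{ij}=\delta_{ij}$, i.e. $(R_n)_{ij}$ is prescribed after conjugating by $e^{T\hat{\mathcal Z}}$, and for the relevant exponentials this simply forces $(R_n)_{ij}=\delta_{ij}$.

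Concretely, for $k\in D_1$ the ordering $\re l_1<\re l_2<\re l_3$, $\re z_2<\re z_1<\re z_3$ together with \eqref{gammaijnudef} gives:
\begin{itemize}
\item $\gamma_3$ for $i\geq j$, hence, via the limit $x\to\infty$ with decaying exponentials, $(T_1)_{ij}=\delta_{ij}$ for $i>j$ and $(T_1)_{ii}=1$;
\item $\gamma_1$ for $(i,j)=(1,2)$ since $\re z_1>\re z_2$, hence $(R_1)_{12}=0$;
\item $\gamma_2$ for $(1,3),(2,3)$, hence $(S_1)_{13}=(S_1)_{23}=0$.
\end{itemize}
So $T_1$ is upper unitriangular with unknowns $a=(T_1)_{12}$, $b=(T_1)_{13}$, $c=(T_1)_{23}$. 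The condition $(S^{-1}sT_1)_{12}=0$ gives $a=-(S^{-1}s)_{12}/(S^{-1}s)_{11}$. The conditions $(sT_1)_{13}=(sT_1)_{23}=0$ form a linear system in $b,c$. Cramer's rule, with the cofactor identity $s^A=\operatorname{cof}(s)$ from Proposition \ref{sAprop}(g) (so that $2\times2$ minors of $s$ become entries of $s^A$), yields $b=s^A_{31}/s^A_{33}$ and $c=s^A_{32}/s^A_{33}$.

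The same scheme applies to $D_2,D_{12},D_6,D_7$. Each time, read off from the orderings which entries are normalized at $\infty$, at $(0,0)$, or at $(0,T)$. This produces $3$ conditions on $T_n$ (unipotent structure), $S_n=sT_n$, and $R_n=S^{-1}sT_n$, which together give a solvable triangular-type linear system. In $D_2$, for example, $(1,3),(2,3)$ now use $\gamma_1$, so the conditions are $(S^{-1}sT_2)_{13}=(S^{-1}sT_2)_{23}=0$. Writing $S^{-1}=(S^A)^T$ and $S^{-1}s$ minors via cofactors, the solution involves the mixed determinants $s^A_{33}S^A_{11}-s^A_{13}S^A_{31}$ appearing in \eqref{SnTnexplicit}.

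The main obstacle will be this bookkeeping step: expressing $2\times2$ minors of $S^{-1}s$ and of $s$ in terms of $s^A,S^A$ (using $\det s=\det S=1$, so that minors of $S^{-1}s$ are entries of $(S^A)^{T}$-twisted products of $s^A$ and $S$) so that the Cramer-rule solutions take exactly the displayed forms. I would verify each entry by checking the identity $\operatorname{cof}(S^{-1}s)=S^T s^A$, which follows from \eqref{global rel adjoint} in the compact support case.
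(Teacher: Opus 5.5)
Your strategy is the paper's. You relate $M_n$ to $\mu_1,\mu_2,\mu_3$ by $(x,t)$-independent matrices and read off from \eqref{Mndef} the $18$ conditions $(R_n)_{ij}=0$ if $\gamma^n_{ij}=\gamma_1$, $(S_n)_{ij}=0$ if $\gamma^n_{ij}=\gamma_2$, and $(T_n)_{ij}=\delta_{ij}$ if $\gamma^n_{ij}=\gamma_3$. You then solve them, and your computation for $D_1$ is correct. However, your worked example for $D_2$ misreads \eqref{gammaijnudef}, and carrying it out as written would not produce the displayed $T_2$.

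In $D_2$ one has $\re z_2<\re z_3<\re z_1$, so only the entries $(1,2)$ and $(1,3)$ use $\gamma_1$. The entry $(2,3)$ still has $\re l_2<\re l_3$ and $\re z_2<\re z_3$, so it uses $\gamma_2$. The correct conditions on the third column of $T_2$ are therefore $(S^{-1}sT_2)_{13}=0$ and $(sT_2)_{23}=0$, not $(S^{-1}sT_2)_{13}=(S^{-1}sT_2)_{23}=0$. Your pair of conditions is just the $D_1$ system with $s$ replaced by $S^{-1}s$. Using $\mathrm{cof}(S^{-1}s)=S^Ts^A$, it would give $(T_2)_{13}=(S^Ts^A)_{31}/(S^Ts^A)_{33}$ and $(T_2)_{23}=(S^Ts^A)_{32}/(S^Ts^A)_{33}$, which are different functions.

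The mixed denominator $s^A_{33}S^A_{11}-s^A_{13}S^A_{31}$ appears precisely because one row comes from $S^{-1}s=(S^A)^Ts$ and the other from $s$. Write $(S^{-1}s)_{1j}=\sum_m S^A_{m1}s_{mj}$. Then the Cramer determinant $(S^{-1}s)_{11}s_{22}-(S^{-1}s)_{12}s_{21}$ collapses to $S^A_{11}s^A_{33}-S^A_{31}s^A_{13}$. The two numerators collapse to $s^A_{31}S^A_{11}-s^A_{11}S^A_{31}$ and $s^A_{32}S^A_{11}-s^A_{12}S^A_{31}$, as displayed.

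The same mixed structure occurs in two other regions:
\begin{itemize}
\item in $D_6$ the conditions are $(S_6)_{12}=0$ and $(R_6)_{31}=(R_6)_{32}=0$;
\item in $D_7$ they are $(S_7)_{21}=0$ and $(R_7)_{31}=(R_7)_{32}=0$.
\end{itemize}
By contrast, $D_{12}$ behaves like $D_1$, with $(R_{12})_{21}=0$ and $(S_{12})_{13}=(S_{12})_{23}=0$. In every case the system reduces to one scalar equation plus one $2\times 2$ Cramer system. So the ``bookkeeping obstacle'' you anticipate disappears once the contours are read off correctly.
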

\proofbegin
Choose $K > 0$ such that $u_0, v_0$ have support in $[0,K]$. 
Define $R_{n}(k)$, $S_n(k)$ and $T_n(k)$, $n = 1, \dots, 12$, by
\begin{align}\label{SnTndef}
\begin{cases}
R_n(k) = \lim_{t\to T}e^{-t\hat{\mathcal{Z}}(k)}M_n(0,t,k), \\
S_n(k) = M_n(0,0,k), \\
T_n(k) =  \displaystyle{\lim_{x \to \infty}} e^{-x\hat{\mathcal{L}}(k)}M_n(x,0,k), 
\end{cases}
\quad k \in \bar{D}_n\setminus \mathcal{Q}, 
\end{align}
where the limits exist because $\mathsf{U}(x,0,k) = 0$ for $|x| > K$, which implies by \eqref{Mndef} that $e^{-x\hat{\mathcal{L}}}M_n(x,0,k)$ is independent of $x$ for $|x| > K$.
Recall that $\mu_{j}(x,t,k)$, and $s(k)$ are defined for all $k \in \C \setminus \{0\}$ for compactly supported data. We find 
\begin{align}
M_n(x,t,k) = \mu_{1}(x,t,k) e^{x\hat{\mathcal{L}}+t\hat{\mathcal{Z}}} R_n(k) = \mu_{2}(x,t,k) e^{x\hat{\mathcal{L}}+t\hat{\mathcal{Z}}} S_n(k) = \mu_{3}(x,t,k) e^{x\hat{\mathcal{L}}+t\hat{\mathcal{Z}}} T_n(k),
\end{align}   
and hence, comparing with (\ref{mu3mu2mu1sS}), 
\begin{equation}\label{sSSnrelations}  
s(k) = S_n(k)T_n(k)^{-1}, \quad S(k)=S_{n}(k)R_{n}(k)^{-1}, \qquad k \in \bar{D}_n\setminus \mathcal{Q}.
\end{equation}
Given $s(k)$, equation (\ref{sSSnrelations}) constitutes a matrix factorization problem which can be uniquely solved for $S_n(k)$ and $T_n(k)$.
In fact, the integral equations (\ref{Mndef}) imply that
\begin{align} 
& \left(R_n(k)\right)_{ij} = 0 \quad \text{if} \quad \gamma_{ij}^n = \gamma_{1}, \nonumber \\
& \left(S_n(k)\right)_{ij} = 0 \quad \text{if} \quad \gamma_{ij}^n = \gamma_{2}, \nonumber \\
& \left(T_n(k)\right)_{ij} = \delta_{ij} \quad \text{if} \quad \gamma_{ij}^n = \gamma_{3}, \nonumber
\end{align}
so the relation (\ref{sSSnrelations}) yields $18$ scalar equations for $18$ unknowns. 
The explicit solution of this algebraic system gives (\ref{SnTnexplicit}).
\proofend

\begin{lemma}[Jump condition for $M$]\label{Mjumplemma}
Let $u_0, v_0 \in \mathcal{S}(\R_{+})$ and $\tilde{u}_{0}, \tilde{u}_{1}, \tilde{u}_{2}, \tilde{v}_0 \in C^{\infty}([0,T])$. For each $x \geq 0$, $t\in [0,T]$, $M(x,t,k)$ satisfies the jump condition
\begin{align*}
  M_+(x,t,k) = M_-(x,t, k) v(x, t, k), \qquad k \in \Gamma \setminus \mathcal{Q},
\end{align*}
where $v$ is the jump matrix defined in (\ref{vdef}) and $\mathcal{Q}$ is the set defined in (\ref{calQdef}).
\end{lemma}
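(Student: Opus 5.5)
The plan is to follow the standard approach of the unified transform: first prove the jump relation for compactly supported data using Lemma \ref{Snexplicitlemma}, then remove the support restriction by approximation.

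\textbf{Compactly supported data.} Assume first that $u_0,v_0$ (and, if convenient, the boundary values) have compact support. By Lemma \ref{Snexplicitlemma}, on each ray $\Gamma_n$ separating $D_n$ from an adjacent sector $D_{n'}$ we have
\[
M_n = \mu_3 e^{x\hat{\mathcal{L}}+t\hat{\mathcal{Z}}}T_n, \qquad M_{n'} = \mu_3 e^{x\hat{\mathcal{L}}+t\hat{\mathcal{Z}}}T_{n'}.
\]
Since $\mu_3$ is invertible and common to both expressions,
\[
M_{n'}^{-1}M_n = e^{x\hat{\mathcal{L}}+t\hat{\mathcal{Z}}}\big(T_{n'}(k)^{-1}T_n(k)\big).
\]
Hence the jump matrix is $e^{x\hat{\mathcal{L}}+t\hat{\mathcal{Z}}}(T_{n'}^{-1}T_n)$, with the order fixed by which side of the oriented ray is $+$. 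The conjugation turns the $(i,j)$ entry into that entry times $e^{\theta_{ij}}$, which produces the exponentials appearing in (\ref{vdef}).

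\textbf{Identifying the entries.} Next I would identify the entries of $T_{n'}^{-1}T_n$ with the $r_j$, using the explicit formulas (\ref{SnTnexplicit}). For $\Gamma_1=\R_+$ the relevant pair is $T_{12}$ and $T_1$. For $\Gamma_2$ it is $T_1$ and $T_2$. For $\Gamma_7=\R_-$ it is $T_6$ and $T_7$.

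On $\Gamma_1$ I will need the identity $\det$ of the upper-left $2\times2$ block of $S^{-1}s$ equal to $(S^Ts^A)_{33}$, already used in the proof of Theorem \ref{r1r2th}(v). I will also need the $\mathcal{B}$-symmetries (\ref{sym of s and S}), (\ref{sym of sA and SA}), to write $(S^{-1}s)_{21}/(S^{-1}s)_{22}$ in terms of $r_1^*$. This yields $v_1$, with $(2,2)$ entry $1-|r_1|^2$.

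On $\Gamma_2$ the product $T_1^{-1}T_2$ is unipotent upper triangular with nonzero entries only in the third column. Simplifying those entries, via cofactor identities valid for compactly supported data ($s^A$ being the cofactor matrix of $s$, and $S^A$ of $S$), should give exactly $-r_3$ and $r_4$ as defined in (\ref{r1r2def}). The expression for $r_4$ contains $s_{21}$ and the denominator $s^A_{33}S^A_{11}-s^A_{13}S^A_{31}$, which supports this guess.

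On $\Gamma_7$ the same procedure with $T_6,T_7$ gives $v_7$ in terms of $r_2$, using the identities from the proof of Theorem \ref{r1r2th}(v).

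The remaining nine rays do not need separate computation. The $\mathcal{A}$- and $\mathcal{B}$-symmetries of $M$ (Proposition \ref{Mnprop}(f)), together with (\ref{vsymm}), transport these jumps to the other rays. So it suffices to verify $v_1$, $v_2$, $v_7$, plus, for example, $v_{12}$ as the $\mathcal{B}$-image of $v_2$.

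\textbf{General Schwartz data.} For $u_0,v_0\in\mathcal{S}(\R_+)$, approximate by compactly supported data $u_0^{(m)},v_0^{(m)}$. Note that the boundary values cannot be changed independently, but $S$, $S^A$ depend only on them and $M_n$ is defined through the Fredholm equations (\ref{Mndef}); the approximation therefore only concerns the $x$-integrals on $\gamma_3$. Continuity of $M_n$ and of the spectral functions with respect to the data, as in \cite[Lemma 4.5]{CLgoodboussinesq}, away from $\mathcal{Q}$, lets us pass to the limit in both the boundary values $M_\pm$ and in $v$. Alternatively, $M_n$ can be shown to satisfy the same factorization directly, with the entries of $T_n$ that involve $s$ extended by the global relation (\ref{global rel}).

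\textbf{Main obstacle.} I expect the hardest step to be the algebraic simplification of $T_1^{-1}T_2$ on $\Gamma_2$ into the form with $r_3$ and $r_4$, and the analogous one on $\Gamma_7$. This needs careful use of the cofactor identities between $s$ and $s^A$, and between $S$ and $S^A$. It also needs checking that every entry is evaluated only in its domain of definition on the ray, which requires the global relation to supply the analytic continuation of $S^{-1}s$.
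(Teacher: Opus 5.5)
Your proposal is correct and follows essentially the same route as the paper. For compactly supported data the paper writes $M_1 = M_{12}e^{x\hat{\mathcal{L}}+t\hat{\mathcal{Z}}}J_1$ and identifies $J_1 = T_{12}^{-1}T_1$ by evaluating at $x>K$, $t=0$, which is equivalent to your cancellation of the common factor $\mu_3$; it then reads off $v_1$ from the explicit $T_n$, calls the other rays ``similar'', and removes the support restriction by the same cutoff-and-limit argument you describe.

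Your reduction to $\Gamma_1,\Gamma_2,\Gamma_7$ via the $\mathcal{A}$- and $\mathcal{B}$-symmetries together with (\ref{vsymm}) is a legitimate shortcut. On $\Gamma_1$ you do not need the determinant identity: the $(2,1)$ entry of $T_{12}^{-1}T_1$ is $(S^{-1}s)_{21}/(S^{-1}s)_{22}$, which equals $r_1^*$ by the $\mathcal{B}$-symmetry alone, and the $(2,2)$ entry $1-r_1r_1^*$ then follows directly.
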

\begin{proof}
We will show that 
\begin{align}\label{M1M6v1}
M_1 = M_{12} v_1, \qquad k \in \Gamma_{1} \setminus \mathcal{Q};
\end{align}
the proof that $M_{n} = M_{n-1} v_{n}$ for $k \in \Gamma_{n}\setminus \mathcal{Q}$, $n=2,\ldots,12$ is similar. 

Suppose first that $u_0, v_0$ have support in some compact subset $[0,K]$ for some $K > 0$. 
For each $k$, $M_n(x,t,k)$ is a smooth function of $x \geq 0$ and $t\in [0,T]$ which satisfies (\ref{Xlax}). 
Hence there exists a matrix $J_1(k)$ independent of $x$ and $t$ such that
\begin{align}\label{M1M6J1}
M_1(x,t,k) = M_{12}(x,t,k) e^{x\widehat{\mathcal{L}(k)}+t\widehat{\mathcal{Z}(k)}}J_1(k), \qquad k \in \Gamma_{1}\setminus \mathcal{Q}.
\end{align}
For $x>K$ and $t=0$ we have $M_n(x,0,k) = e^{x\widehat{\mathcal{L}(k)}}T_n(k)$, where $T_n(k)$ is the matrix defined in (\ref{SnTndef}). Hence, evaluation of (\ref{M1M6J1}) at $x > K$ and $t=0$ gives
$$J_1(k) = T_{12}(k)^{-1}T_1(k).$$
Hence, by (\ref{vdef}) and Lemma \ref{Snexplicitlemma},
$$e^{x\hat{\mathcal{L}}+t\hat{\mathcal{Z}}}[T_{12}(k)^{-1}T_1(k)] = e^{x\hat{\mathcal{L}}+t\hat{\mathcal{Z}}}  \begin{pmatrix}  
 1 & - r_1(k) & 0 \\
  r_1^*(k) & 1 - |r_1(k)|^2 & 0 \\
  0 & 0 & 1
  \end{pmatrix} = v_1(x,t,k).$$
This completes the proof of (\ref{M1M6v1}) for compactly supported $u_0,v_0$. We can then extend (\ref{M1M6v1}) to non-compactly supported $u_0,v_0$ as in \cite[Proposition 2.6]{L3x3} or \cite[Lemma 4.5]{CLgoodboussinesq}. 
\end{proof}

\begin{lemma}\label{M1XYlemma}
Let $u_0, v_0 \in \mathcal{S}(\R_{+})$ and $\tilde{u}_{0}, \tilde{u}_{1}, \tilde{u}_{2}, \tilde{v}_0 \in C^{\infty}([0,T])$.
The functions $M_1$ and $M_{2}$ can be expressed in terms of the entries of $\mu_{1},\mu_{2},\mu_{3},s,S,\mu_{1}^{A},\mu_{2}^{A},\mu_{3}^{A},s^{A},S^{A}$ as follows:
\begin{align}
& M_1 = \begin{pmatrix} 
(\mu_{3})_{11} & (\mu_{2})_{11}\frac{s^{A}_{23}S^{A}_{31}-s^{A}_{33}S^{A}_{21}}{(S^{-1}s)_{11}}e^{-\theta_{21}}+(\mu_{2})_{12}\frac{s^{A}_{33}S^{A}_{11}-s^{A}_{13}S^{A}_{31}}{(S^{-1}s)_{11}}+(\mu_{2})_{13}\frac{s^{A}_{13}S^{A}_{21}-s^{A}_{23}S^{A}_{11}}{(S^{-1}s)_{11}}e^{\theta_{32}} & \frac{(\mu_{2})_{13}}{s_{33}^A} \\
(\mu_{3})_{21} & (\mu_{2})_{21}\frac{s^{A}_{23}S^{A}_{31}-s^{A}_{33}S^{A}_{21}}{(S^{-1}s)_{11}}e^{-\theta_{21}}+(\mu_{2})_{22}\frac{s^{A}_{33}S^{A}_{11}-s^{A}_{13}S^{A}_{31}}{(S^{-1}s)_{11}}+(\mu_{2})_{23}\frac{s^{A}_{13}S^{A}_{21}-s^{A}_{23}S^{A}_{11}}{(S^{-1}s)_{11}}e^{\theta_{32}} & \frac{(\mu_{2})_{23}}{s_{33}^A} \\
(\mu_{3})_{31} & (\mu_{2})_{31}\frac{s^{A}_{23}S^{A}_{31}-s^{A}_{33}S^{A}_{21}}{(S^{-1}s)_{11}}e^{-\theta_{21}}+(\mu_{2})_{32}\frac{s^{A}_{33}S^{A}_{11}-s^{A}_{13}S^{A}_{31}}{(S^{-1}s)_{11}}+(\mu_{2})_{33}\frac{s^{A}_{13}S^{A}_{21}-s^{A}_{23}S^{A}_{11}}{(S^{-1}s)_{11}}e^{\theta_{32}} & \frac{(\mu_{2})_{33}}{s_{33}^A} 
\end{pmatrix}, \label{M1 in terms of spectral functions}
\end{align}
for all $x \geq 0$, $t\in [0,T]$, and $k \in \bar{D}_1 \setminus \mathcal{Q}$, and
\begin{align*}
& M_2 = \left(\begin{array}{l l l}
(\mu_{3})_{11} & (\mu_{2})_{11}\frac{s^{A}_{23}S^{A}_{31}-s^{A}_{33}S^{A}_{21}}{(S^{-1}s)_{11}}e^{-\theta_{21}}+(\mu_{2})_{12}\frac{s^{A}_{33}S^{A}_{11}-s^{A}_{13}S^{A}_{31}}{(S^{-1}s)_{11}}+(\mu_{2})_{13}\frac{s^{A}_{13}S^{A}_{21}-s^{A}_{23}S^{A}_{11}}{(S^{-1}s)_{11}}e^{\theta_{32}} & \ldots \\
(\mu_{3})_{21} & (\mu_{2})_{21}\frac{s^{A}_{23}S^{A}_{31}-s^{A}_{33}S^{A}_{21}}{(S^{-1}s)_{11}}e^{-\theta_{21}}+(\mu_{2})_{22}\frac{s^{A}_{33}S^{A}_{11}-s^{A}_{13}S^{A}_{31}}{(S^{-1}s)_{11}}+(\mu_{2})_{23}\frac{s^{A}_{13}S^{A}_{21}-s^{A}_{23}S^{A}_{11}}{(S^{-1}s)_{11}}e^{\theta_{32}} & \ldots \\
(\mu_{3})_{31} & (\mu_{2})_{31}\frac{s^{A}_{23}S^{A}_{31}-s^{A}_{33}S^{A}_{21}}{(S^{-1}s)_{11}}e^{-\theta_{21}}+(\mu_{2})_{32}\frac{s^{A}_{33}S^{A}_{11}-s^{A}_{13}S^{A}_{31}}{(S^{-1}s)_{11}}+(\mu_{2})_{33}\frac{s^{A}_{13}S^{A}_{21}-s^{A}_{23}S^{A}_{11}}{(S^{-1}s)_{11}}e^{\theta_{32}} & \ldots 
\end{array}\right. \\
& \hspace{3cm} \left.  \begin{array}{l l}
\ldots & (\mu_{2})_{11}\frac{-S^{A}_{31}e^{-\theta_{31}}}{s^{A}_{33}S^{A}_{11}-s^{A}_{13}S^{A}_{31}} + (\mu_{2})_{13}\frac{S^{A}_{11}}{s^{A}_{33}S^{A}_{11}-s^{A}_{13}S^{A}_{31}} \\
\ldots & (\mu_{2})_{21}\frac{-S^{A}_{31}e^{-\theta_{31}}}{s^{A}_{33}S^{A}_{11}-s^{A}_{13}S^{A}_{31}} + (\mu_{2})_{23}\frac{S^{A}_{11}}{s^{A}_{33}S^{A}_{11}-s^{A}_{13}S^{A}_{31}} \\
\ldots & (\mu_{2})_{31}\frac{-S^{A}_{31}e^{-\theta_{31}}}{s^{A}_{33}S^{A}_{11}-s^{A}_{13}S^{A}_{31}} + (\mu_{2})_{33}\frac{S^{A}_{11}}{s^{A}_{33}S^{A}_{11}-s^{A}_{13}S^{A}_{31}}
\end{array} \right),
\end{align*}
for all $x \geq 0$, $t\in [0,T]$, and $k \in \bar{D}_2 \setminus \mathcal{Q}$.
\end{lemma}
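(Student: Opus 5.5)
The plan is to reduce first to compactly supported $u_0,v_0$ (and, where convenient, boundary data), and then to remove this restriction by the limiting argument of \cite[Lemma 4.5]{CLgoodboussinesq}, as in the proof of Lemma \ref{Mjumplemma}. Under compact support, Lemma \ref{Snexplicitlemma} gives
\[
M_n=\mu_3e^{x\hat{\mathcal L}+t\hat{\mathcal Z}}T_n=\mu_2e^{x\hat{\mathcal L}+t\hat{\mathcal Z}}S_n,\qquad S_n=sT_n,
\]
with $T_1,T_2$ explicit in \eqref{SnTnexplicit}. I will read off each column of $M_1$ and $M_2$ from whichever representation produces only quantities that stay defined in $\bar D_n$ for general data.

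For the first column, $T_1$ and $T_2$ have first column $(1,0,0)^T$, so $[M_n]_1=[\mu_3]_1$ for $n=1,2$. For the third column of $M_1$, I will use $\mu_2$ and compute $[S_1]_3=[sT_1]_3$. The identity $s^A=\operatorname{cof}(s)$ from Proposition \ref{sAprop}$(g)$, together with $\det s=1$, gives $s^{-1}=(s^A)^T$. Write $T_1=I+N$ with third column $(s^A_{31}/s^A_{33},\,s^A_{32}/s^A_{33},\,1)^T$. Then $[sT_1]_3=s\,[T_1]_3$. Recognizing $[T_1]_3=[s^{-1}]_3$-type combinations divided by $s^A_{33}$, namely the third row of $s^A$ equal to the third column of $(s^{-1})^T$, this should give $[S_1]_3=(0,0,1/s^A_{33})^T$. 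It follows that $[M_1]_3=[\mu_2]_3/s^A_{33}$.

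The second and third columns of $M_2$, and the second column of $M_1$, follow the same pattern. I compute $[S_n]_j=s[T_n]_j$ and simplify with $s=(s^A)^{-T}$, $S^{-1}=(S^A)^T$, and $\det S=1$. Using the relation $R_n=S^{-1}S_n$ is convenient here, because the zero pattern $(R_n)_{ij}=0$ when $\gamma^n_{ij}=\gamma_1$ and $(S_n)_{ij}=0$ when $\gamma^n_{ij}=\gamma_2$ determines $S_n$ directly in terms of $S^A$ and $s^A$. Concretely, in $D_1$ the column $[S_1]_2$ must satisfy $(S^A)^T[S_1]_2$ having the prescribed zeros, with normalization fixed by $(T_1)_{22}=1$. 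Solving this $3\times3$ linear system by Cramer's rule produces cofactor combinations such as $s^A_{33}S^A_{11}-s^A_{13}S^A_{31}$ over $(S^{-1}s)_{11}$. Finally, $e^{x\hat{\mathcal L}+t\hat{\mathcal Z}}$ acting on an entry $(i,j)$ contributes $e^{\theta_{ij}}$, which accounts for the factors $e^{-\theta_{21}},e^{\theta_{32}},e^{-\theta_{31}}$.

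With the formulas established for compact support, the general case follows because every expression on the right side involves only $[\mu_3]_1$, $\mu_2$, $S^A$, $(S^{-1}s)_{11}$, and the entries $s^A_{j3}$. These are defined and continuous on $\bar D_1$ and $\bar D_2$ by Propositions \ref{XYprop}, \ref{XAYAprop}, \ref{sprop}, \ref{sAprop} and the global relation \eqref{global rel}. Approximating by truncated data and passing to the limit on both sides then finishes the proof. The main obstacle I expect is the algebraic bookkeeping in the second column, which means verifying that the Cramer's-rule output matches the stated cofactor combinations exactly. For this I would lean on $\det S=\det s=1$ and the adjugate identities, and I would check the result against the jump relation $M_1=M_{12}v_1$ as a consistency test.
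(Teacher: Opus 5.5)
Your proposal is correct and follows the paper's proof: reduce to compactly supported data, read off the columns of $M_1,M_2$ from Lemma \ref{Snexplicitlemma}, and extend to general data by the cutoff/limiting argument of \cite[Lemma 4.5]{CLgoodboussinesq}; your Cramer's-rule computation supplies the algebra the paper leaves implicit, and its determinant is indeed $(S^{-1}s)_{11}$, giving exactly the stated numerators. There are only minor slips: the third row of $s^A$ is the third column of $s^{-1}$ (not of $(s^{-1})^T$), and the $3\times 3$ system for $[S_1]_2$ needs $(T_1)_{32}=0$ in addition to $(R_1)_{12}=0$ and $(T_1)_{22}=1$.
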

\begin{proof}
We only prove the relation for $M_{1}$; the proof for $M_{2}$ being similar.
Suppose first that $u_0, v_0, \tilde{u}_{0}, \tilde{u}_{1}, \tilde{u}_{2}, \tilde{v}_0$ are compactly supported. Then Lemma \ref{Snexplicitlemma} implies
\begin{align*}
\begin{cases}
 [M_1(x,t,k)]_1 = [\mu_{3}(x,t,k)]_1, \\
 [M_1(x,t,k)]_3 = \frac{[\mu_{2}(x,t,k)]_3}{s^{A}_{33}(k)}, \\
 [M_1(x,t,k)]_2 = \big( [\mu_{2}]_{1}\frac{s^{A}_{23}S^{A}_{31}-s^{A}_{33}S^{A}_{21}}{(S^{-1}s)_{11}}e^{-\theta_{21}}+[\mu_{2}]_{2}\frac{s^{A}_{33}S^{A}_{11}-s^{A}_{13}S^{A}_{31}}{(S^{-1}s)_{11}}+[\mu_{2}]_{13}\frac{s^{A}_{13}S^{A}_{21}-s^{A}_{23}S^{A}_{11}}{(S^{-1}s)_{11}}e^{\theta_{32}} \big)(x,t,k).
\end{cases}
\end{align*}
This finishes the proof of the first equation in the case of compactly supported initial data. All quantities in \eqref{M1 in terms of spectral functions} remain well-defined for $x \geq 0$, $t\in [0,T]$ and $k\in \overline{D} _{1}\setminus\{0\}$ in the case of non-compactly supported initial data. Hence, for general  $u_0, v_0 \in \mathcal{S}(\R_{+})$ and $\tilde{u}_{0}, \tilde{u}_{1}, \tilde{u}_{2}, \tilde{v}_0 \in C^{\infty}([0,T])$, formula \eqref{M1 in terms of spectral functions} can be proved using cutoff functions and taking limits (in a similar way as in \cite[Lemma 4.5]{CLgoodboussinesq}).
\end{proof}

Lemma \ref{M1XYlemma} shows that if $u_0, v_0, \tilde{u}_{0}, \tilde{u}_{1}, \tilde{u}_{2}, \tilde{v}_0$ satisfy Assumption \ref{solitonlessassumption}, then $M$ has no singularities apart from $k=0$. In this case, we can define the value of $M(x,t,k)$ at any point $k_j \in \mathcal{Q} \cap \bar{D}_n  \setminus \{0\}$ by continuity:
\begin{align}\label{Mnkjdef}
M_n(x,t,k_j) = \lim_{\underset{k \in \bar{D}_n \setminus \mathcal{Q}}{k\to k_j}} M_n(x,t,k).
\end{align}
As a consequence, we can replace $\mathcal{Q}$ with $\{0\}$ in all the above results.

\begin{lemma}\label{QtildeQlemma}
Suppose $u_0, v_0 \in \mathcal{S}(\R_{+})$ and $\tilde{u}_{0}, \tilde{u}_{1}, \tilde{u}_{2}, \tilde{v}_0 \in C^{\infty}([0,T])$ are such that Assumption \ref{solitonlessassumption} holds. Then the statements of Proposition \ref{Mnprop} and Lemmas \ref{Snexplicitlemma}-\ref{Mjumplemma} hold with $\mathcal{Q}$ replaced by $\{0\}$.
\end{lemma}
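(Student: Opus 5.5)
The plan is to show that, under Assumption \ref{solitonlessassumption}, $M$ has no singularities in $\C\setminus\{0\}$. Then every statement that previously excluded $\mathcal{Q}$ extends to $\mathcal{Q}\setminus\{0\}$, either by continuity or by analytic continuation.

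The key input is Lemma \ref{M1XYlemma}. It expresses $M_1$ and $M_2$ on $\bar D_1\setminus\mathcal{Q}$ and $\bar D_2\setminus\mathcal{Q}$ through $\mu_1,\mu_2,\mu_3,\mu_j^A$ and the spectral functions $s,S,s^A,S^A$. By Propositions \ref{XYprop}, \ref{XAYAprop}, \ref{sprop}, \ref{sAprop}, all of these are well defined, continuous and analytic in the interior, with continuous $k$-derivatives, on $\bar D_1\setminus\{0\}$ and $\bar D_2\setminus\{0\}$.

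\begin{enumerate}
\item Verify these domain claims column by column. For example, the first column of $\mu_3$ is defined on $\omega^2\bar{\mathcal S}\supset\bar D_1\cup\bar D_2$, and $(S^{-1}s)_{11}$ is defined there via the global relation \eqref{global rel}.
\item Note that the only denominators appearing are $(S^{-1}s)_{11}$, $s^A_{33}$ and $s^A_{33}S^A_{11}-s^A_{13}S^A_{31}$. These are exactly the quantities Assumption \ref{solitonlessassumption} requires to be nonvanishing on the relevant closed sectors minus the origin.
\item Conclude that the right-hand sides of Lemma \ref{M1XYlemma} define continuous extensions of $M_1$ and $M_2$ to $\bar D_1\setminus\{0\}$ and $\bar D_2\setminus\{0\}$. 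These extensions are analytic inside, so the definition \eqref{Mnkjdef} makes sense.
\item Transport the result to $D_3,\dots,D_{12}$ using the symmetries \eqref{Msymm}, i.e.\ the $\mathcal A$-symmetry $k\mapsto\omega k$ and the $\mathcal B$-symmetry $k\mapsto\bar k$.
\end{enumerate}

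Next I would check each item of Proposition \ref{Mnprop} with $\mathcal Q$ replaced by $\{0\}$.
\begin{enumerate}
\item Items (b) and (d) follow from the formulas, since the $k$-derivatives of all constituents are continuous by the cited propositions.
\item The Lax equations (a) hold at the extended points because they hold nearby and the coefficients depend continuously on $k$. Alternatively, the formulas themselves are combinations of solutions $\mu_j e^{x\hat{\mathcal L}+t\hat{\mathcal Z}}$ with $(x,t)$-independent coefficients.
\item Item (e), $\det=1$, and item (f), the symmetries, pass to the limit by continuity.
\item The jump relation of Lemma \ref{Mjumplemma} and the representation of Lemma \ref{Snexplicitlemma} likewise extend by continuity to $\Gamma\setminus\{0\}$ and $\bar D_n\setminus\{0\}$.
\end{enumerate}

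The main obstacle is the uniform bound (c). On the set $\{k\in\bar D_n : |k|\ge\epsilon\}$, the points of $\mathcal Q$ are no longer excluded, and we need $|M_n|\le C$ uniformly in $x\ge0$, $t\in[0,T]$.

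\begin{enumerate}
\item From the formula for $M_1$, each exponential multiplies a $\mu_2$-entry. The regions of boundedness, Proposition \ref{XYprop}(d) together with the sector definitions, should guarantee that the products $(\mu_2)_{i1}e^{-\theta_{21}}$ and $(\mu_2)_{i3}e^{\theta_{32}}$ stay bounded on $\bar D_1$. I would check this by rewriting them in terms of $M$-type eigenfunctions; if needed I would instead use the alternative representation $M_1=\mu_1e^{x\hat{\mathcal L}+t\hat{\mathcal Z}}R_1$ for the columns where $\mu_2$ is not bounded.
\item The spectral coefficients are continuous on $\{|k|\ge\epsilon\}\cap\bar D_n$ and tend to constants as $k\to\infty$, by item (e) of Propositions \ref{sprop} and \ref{sAprop}.
\item The denominators are nonzero there and tend to $1$ at infinity, so they are bounded below on this closed set.
\end{enumerate}

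Combining these gives (c) with $\mathcal Q$ replaced by $\{0\}$. Alternatively, one can use the bound (c) away from $\mathcal Q$ together with continuity in $(x,t,k)$ and the large-$k$ asymptotics of Lemma \ref{Matinftylemma} to close a compactness argument.
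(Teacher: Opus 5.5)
Apart from item (c), your argument is the paper's. The paper's whole justification is the paragraph preceding the lemma: the formulas of Lemma \ref{M1XYlemma} have as their only denominators $(S^{-1}s)_{11}$, $s^A_{33}$ and $s^A_{33}S^A_{11}-s^A_{13}S^A_{31}$, which Assumption \ref{solitonlessassumption} keeps away from zero. Hence $M$ extends continuously to $\mathcal Q\setminus\{0\}$, and to all sectors via the symmetries, and the remaining statements pass to the limit. The paper says nothing specific about the uniform bound (c). You are right to single it out, but the argument you sketch for it does not work.

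Your first step asserts that $(\mu_2)_{i1}e^{-\theta_{21}}$ and $(\mu_2)_{i3}e^{\theta_{32}}$ are bounded on $\bar D_1$ uniformly in $x$. Proposition \ref{XYprop}(d) gives no such thing: $[\mu_2]_1$ and $[\mu_2]_2$ are controlled only on $\omega^2\bar{\mathcal T}$ and $\omega\bar{\mathcal T}$, neither of which meets $D_1$. In fact the claim is false.
\begin{itemize}
\item On $D_1$ one has $\re l_1<\re l_2<\re l_3$, so $|e^{\theta_{32}}|$ grows like $e^{\re(l_3-l_2)x}$.
\item Meanwhile $(\mu_2)_{33}$ tends to a generically nonzero limit as $x\to\infty$. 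For instance, at $t=0$ with compactly supported $u_0,v_0$, $\mu_2=\mu_3e^{x\hat{\mathcal L}}s^{-1}$ gives the limit $(s^{-1})_{33}=s^A_{33}\neq0$.
\item Similarly $(\mu_2)_{31}\sim(s^{-1})_{31}e^{\theta_{31}}$ and $(\mu_2)_{32}\sim(s^{-1})_{32}e^{\theta_{32}}$, so $(\mu_2)_{31}e^{-\theta_{21}}$ and $(\mu_2)_{32}$ are also of size $e^{\re(l_3-l_2)x}$.
\end{itemize}
All three terms of $[M_1]_2$ therefore grow at the same exponential rate, and the boundedness of $[M_1]_2$ rests entirely on their cancellation. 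The nontrivial part is showing that this cancellation, and the remainder, is controlled uniformly as $k$ approaches a point of $\mathsf Z$; the formula alone does not give this.

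Your fallbacks do not close the gap either. The representation $M_1=\mu_1e^{x\hat{\mathcal L}+t\hat{\mathcal Z}}R_1$ does not help, since $\gamma_1$ also integrates from $x'=0$ and $[\mu_1]_2$ grows in the same way. The compactness argument fails because $x$ ranges over the noncompact set $[0,\infty)$, so continuity in $(x,t,k)$ yields no bound uniform in $x$.

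For points of $\mathsf Z$ in an open sector $D_n$ there is a clean fix. These are isolated zeros of an analytic Fredholm determinant, so you can surround such a point by a small circle in $D_n$ at positive distance from $\mathcal Q$, on which the original bound (c) holds. Since $M_n(x,t,\cdot)$ has a removable singularity at the point, the maximum modulus principle gives a bound inside the disk that is uniform in $(x,t)$. Points of $\mathsf Z$ on $\Gamma\setminus\{0\}$ need a separate argument, for example a large-$x$ analysis of $\mu_2$ that is uniform in $k$; neither your proposal nor the paper supplies one.
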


\begin{lemma}[Asymptotics of $M$ as $k \to 0$]\label{Mat1lemma}
Suppose $u_0, v_0 \in \mathcal{S}(\R_{+})$ and $\tilde{u}_{0}, \tilde{u}_{1}, \tilde{u}_{2}, \tilde{v}_0 \in C^{\infty}([0,T])$ are such that Assumptions \ref{solitonlessassumption} and \ref{originassumption} hold.
Let $p \geq 1$ be an integer.
Then there are $3 \times 3$-matrix valued functions $\{\mathcal{M}_n^{(l)}(x,t)\}$, $n = 1,\dots,12$, $l = -2,-1,0, \dots, p$, with the following properties:
\begin{enumerate}[$(a)$]
\item The function $M$ satisfies, for $x \in \R_{+}$ and $t\in [0,T]$,
\begin{align*}
& \bigg|M_n(x,t,k) - \sum_{l=-2}^p \mathcal{M}_n^{(l)}(x,t)k^l\bigg| \leq C
|k|^{p+1}, \qquad |k| \leq \frac{1}{2}, \ k \in \bar{D}_n.
\end{align*}
\item For each $n$ and each $l \geq -2$, $\{\mathcal{M}_n^{(l)}(x,t)\}$ is a smooth function of $x \in \R_{+}$ and $t\in [0,T]$.
\item For $n=1$ and $n=2$, the first coefficients are given by
\begin{align*}
\mathcal{M}_{1}^{(-2)}(x,t) & = \mathcal{M}_{2}^{(-2)}(x,t) = \alpha_{3}(x,t) \begin{pmatrix}
\omega & 0 & 0 \\
\omega & 0 & 0 \\
\omega & 0 & 0
\end{pmatrix}, \\
\mathcal{M}_{1}^{(-1)}(x,t) & = \mathcal{M}_{2}^{(-1)}(x,t) = \beta_{3}(x,t) \begin{pmatrix}
\omega^{2} & 0 & 0 \\
\omega^{2} & 0 & 0 \\
\omega^{2} & 0 & 0 
\end{pmatrix} \\
& + \gamma_{3}(x,t) \begin{pmatrix}
\omega^{2} & 0 & 0 \\
1 & 0 & 0 \\
\omega & 0 & 0
\end{pmatrix} + \delta(x,t) \begin{pmatrix}
0 & 1-\omega & 0 \\
0 & 1-\omega & 0 \\
0 & 1-\omega & 0
\end{pmatrix}, 
\end{align*}
where we omit the long expression for $\delta(x,t)$, and the third columns of $\mathcal{M}_{1}^{(0)}(x,t)$ and $\mathcal{M}_{2}^{(0)}(x,t)$ are given by
\begin{align*}
& \mathcal{M}_{1}^{(0)}(x,t) = \begin{pmatrix}
\star & \star & \epsilon_{1}(x,t) \\
\star & \star & \epsilon_{1}(x,t) \\
\star & \star & \epsilon_{1}(x,t) 
\end{pmatrix},  \quad \epsilon_{1}(x,t) := \frac{\alpha_{2}(x,t)}{\mathfrak{s}^{A(-2)}}, \\
& \mathcal{M}_{2}^{(0)}(x,t) = \begin{pmatrix}
\star & \star & \epsilon_{2}(x,t) \\
\star & \star & \epsilon_{2}(x,t) \\
\star & \star & \epsilon(_{2}x,t) 
\end{pmatrix},  \quad \epsilon_{2}(x,t) := \frac{\mathscr{S}^{A(-1)}_{2}-i\sqrt{3} \mathscr{S}^{A(-2)}(x \alpha_{2}(x,t)+\beta_{2}(x,t))}{\mathscr{S}^{A(-1)}_{2}\mathfrak{s}^{A(-2)}},
\end{align*}
where $\star$ denotes an unspecified entry.

\item For each $x \in \mathbb{R}_{+}$ and $t\in [0,T]$, the function $k \mapsto \begin{pmatrix}
\omega & \omega^{2} & 1
\end{pmatrix}M_{n}(x,t,k)$ is bounded as $k \to 0$, $k \in \bar{D}_{n}$.
\end{enumerate}
\end{lemma}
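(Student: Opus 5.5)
We will derive the expansions at $k=0$ directly from the explicit representations of Lemma \ref{M1XYlemma}, combined with the small-$k$ expansions of the eigenfunctions and spectral functions. On $\bar D_1$, each column of $M_1$ is written in terms of $\mu_2,\mu_3$ and entries of $s,S,s^A,S^A$. Proposition \ref{XYat1prop} gives expansions of $\mu_2$ and $\mu_3$ starting at $k^{-2}$, with coefficients $C_j^{(l)}$ of the explicit structured form (\ref{Cjminus2})--(\ref{Cjp2p}). Propositions \ref{sprop} and \ref{sAprop} give Laurent expansions of $s,S,s^A,S^A$ starting at $k^{-2}$, with explicit leading coefficients. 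For the other sectors we use the analogous formulas for $M_2$. The remaining $M_n$, $n=3,\dots,12$, are reached by the $\mathcal{A}$- and $\mathcal{B}$-symmetries (\ref{Msymm}) together with the corresponding symmetries of $\mu_j$ and the spectral functions. Parts (a) and (b) then follow because each entry of $M_n$ is a finite combination of products and quotients of functions with termwise-differentiable expansions at $0$ with smooth coefficients.

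A quotient has a Laurent expansion with bounded order of pole only if the leading coefficient of each denominator is nonzero. This is exactly what Assumption \ref{originassumption} provides.
\begin{itemize}
\item The first column is $[\mu_3]_1$, with leading terms $\alpha_3 k^{-2}$ and $(\beta_3,\gamma_3)k^{-1}$ read off from (\ref{Cjminus2})--(\ref{Cjminus1}).
\item The third column is $[\mu_2]_3/s^A_{33}$. Here $s^A_{33}\sim \mathfrak{s}^{A(-2)}k^{-2}$ with $\mathfrak{s}^{A(-2)}\neq 0$, so this column is $O(1)$ with constant term $\alpha_2/\mathfrak{s}^{A(-2)}$ in every row, since the third column of $C_2^{(-2)}$ is $\alpha_2(1,1,1)^T$. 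This gives $\epsilon_1$.
\item For $M_2$, the denominator $s^A_{33}S^A_{11}-s^A_{13}S^A_{31}$ has order $k^{-3}$ by the last item of Assumption \ref{originassumption}. One must therefore expand numerator and denominator to one further order. This brings in $\mathscr{S}^{A(-1)}_2$, the $k^{-1}$ coefficient of $\mu_2$ (including the $x$-dependent terms coming from $e^{-\theta_{31}}$ expanded in $k$), and the combination $x\alpha_2+\beta_2$. The result is $\epsilon_2$.
\item The second column has denominator $(S^{-1}s)_{11}\sim c\,k^{-2}$ with $c\ne0$. Its numerators are products of $O(k^{-2})$ terms, and each $e^{\pm\theta_{ij}}$ must be expanded as a power series in $k$ with polynomial-in-$(x,t)$ coefficients.
\end{itemize}

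The main obstacle is the second column, together with the claim that $\mathcal M^{(-2)}$ and $\mathcal M^{(-1)}$ have no second- or third-column contributions beyond $\delta$. A priori the second column could blow up like $k^{-4}$. We must show that the leading cancellations occur because the numerator coefficient matrices have rank one with common row vector $(\omega,\omega^2,1)$, or its permutations. For example, $s^A_{23}S^A_{31}-s^A_{33}S^A_{21}$ has its $k^{-4}$ term vanish because $s^{A(-2)}$ and $S^{A(-2)}$ are proportional to the same rank-one matrix. I would organize this by writing $\mu_2=k^{-2}C^{(-2)}+\dots$ and pairing each column of $\mu_2$ with the cofactor-type coefficients. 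Then I would check symbolically that the contributions of orders $k^{-4}$ and $k^{-3}$ cancel, which is why $C^{(1)}_j,C^{(2)}_j$ are needed. What survives at order $k^{-1}$ is proportional to $(1,1,1)^T$; calling its coefficient $(1-\omega)\delta$ defines $\delta$. That $\mathcal M_1^{(l)}=\mathcal M_2^{(l)}$ for $l=-2,-1$ follows by comparing the two expansions, or from the jump relation $M_2=M_1v_2$ on $\Gamma_2$: there $v_2-I=O(k)$ by Theorem \ref{r1r2th}(iii), since $r_3=O(k^2)$ and $r_4=O(k)$, so the singular parts agree.

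Finally, (d) is immediate from (c) for $n=1,2$. The row vector $(\omega,\omega^2,1)$ annihilates $(\omega,\omega,\omega)^T$, $(\omega^2,\omega^2,\omega^2)^T$, $(\omega^2,1,\omega)^T$ and $(1,1,1)^T$, so $(\omega,\omega^2,1)\mathcal M^{(-2)}_n=(\omega,\omega^2,1)\mathcal M^{(-1)}_n=0$. For the other $n$, (d) follows by the symmetries. Indeed, $(\omega,\omega^2,1)\mathcal{A}=(\omega^2,1,\omega)=\omega(\omega,\omega^2,1)$, and $(\omega,\omega^2,1)\mathcal{B}$ equals the complex conjugate of $(\omega,\omega^2,1)$. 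Hence boundedness of $(\omega,\omega^2,1)M$ transfers between sectors under $k\mapsto\omega k$ and $k\mapsto\bar k$.
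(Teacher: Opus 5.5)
Your proposal is correct and follows the argument the paper intends. The lemma is stated there without a written proof, but it is meant to follow, as you describe, from five ingredients: the representations of Lemma \ref{M1XYlemma}; the small-$k$ expansions of $\mu_j$, $s$, $S$, $s^A$, $S^A$, with Assumption \ref{originassumption} fixing the orders of the three denominators; the coincidence of the first two columns of $M_1$ and $M_2$; the annihilation of $\mathcal{M}^{(-2)}_n$ and $\mathcal{M}^{(-1)}_n$ by $(\omega,\omega^2,1)$; and the $\mathcal{A}$- and $\mathcal{B}$-symmetries. Two bookkeeping remarks for when you carry it out. First, in the second column three orders must cancel, namely $k^{-4}$, $k^{-3}$ and $k^{-2}$, not the two you list; the last of these needs the specific forms of $s^{A(-1)}$, $S^{A(-1)}$, $C_2^{(-1)}$ and the $O(k)$ terms of $e^{-\theta_{21}}$, $e^{\theta_{32}}$, not just the rank-one structure. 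Second, for $M_2$, using the coefficient forms displayed in Propositions \ref{XYat1prop} and \ref{sAprop}, the constant term of the third column comes out as $\big(i\sqrt{3}\,\mathscr{S}^{A(-2)}(x\alpha_2+\beta_2)-\alpha_2\mathscr{S}^{A(-1)}_2\big)\big/\lim_{k\to 0}k^3\big(s^A_{33}S^A_{11}-s^A_{13}S^A_{31}\big)$. This quantity must be linear in $(\alpha_2,\beta_2)$, whereas the printed numerator of $\epsilon_2$ contains a bare constant term $\mathscr{S}^{A(-1)}_2$, so the printed formula for $\epsilon_2$ contains a misprint and "the result is $\epsilon_2$" should not be asserted as stated.
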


The next lemma completes the proof that $M$ satisfies RH problem \ref{RH problem for M}.

\begin{lemma}\label{Mxtjumplemma}
For each $(x,t) \in \R_{+} \times [0,T]$, $M(x,t,k)$ is an analytic function of $k \in \C \setminus \Gamma$ with continuous boundary values on $\Gamma\setminus \{0\}$. Moreover, $M$ satisfies the jump condition (\ref{Mjumpcondition}).
\end{lemma}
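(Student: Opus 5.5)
The claim is essentially a bookkeeping consequence of what is already established; I plan to assemble it from Proposition \ref{Mnprop}, Lemma \ref{Mjumplemma} and Lemma \ref{QtildeQlemma}. Throughout, Assumption \ref{solitonlessassumption} is in force, so Lemma \ref{QtildeQlemma} lets us replace $\mathcal{Q}$ by $\{0\}$ in all earlier statements.

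\emph{Analyticity.} By Proposition \ref{Mnprop}$(b)$ with $\mathcal{Q}=\{0\}$, each $M_n(x,t,\cdot)$ is analytic in $D_n$. Since $\C\setminus\Gamma=\bigcup_{n=1}^{12}D_n$ is a disjoint union of open sectors, the sectionally defined function $M=M_n$ on $D_n$ is analytic on $\C\setminus\Gamma$.

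\emph{Boundary values.} Again by Proposition \ref{Mnprop}$(b)$, each $M_n$ extends continuously to $\bar D_n\setminus\{0\}$. The boundary values $M_\pm$ on a ray $\Gamma_n$ are therefore the restrictions to that ray of the continuous extensions of the two adjacent $M_n$'s, namely $M_n$ and $M_{n-1}$ with indices mod $12$. The orientation away from the origin makes $D_n$ lie to the left of $\Gamma_n$. These restrictions are continuous on $\Gamma\setminus\{0\}$. Before stating this I would double-check that the extension in $(b)$ is genuinely up to the boundary rays uniformly in the needed sense; the bound \eqref{Mnbounded} together with $(d)$ gives local uniform continuity away from $0$.

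\emph{Jump relation.} Lemma \ref{Mjumplemma} (with $\mathcal{Q}$ replaced by $\{0\}$) gives $M_n=M_{n-1}v_n$ on $\Gamma_n\setminus\{0\}$, which is exactly $M_+=M_-v$. The only subtle point is at points $k_j\in\mathsf{Z}\setminus\{0\}$ (zeros of Fredholm determinants), where $M_n$ was defined in \eqref{Mnkjdef} by continuity. There the jump relation follows by passing to the limit along $\Gamma_n$: both sides are continuous, and $v$ is continuous in $k$ on $\Gamma\setminus\{0\}$ by Theorem \ref{r1r2th}$(i)$.

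The only point I expect might need care is confirming that the jump verified in Lemma \ref{Mjumplemma} for $n=1$ transfers to all twelve rays. I would handle this with the symmetries \eqref{Msymm} and \eqref{vsymm}, rather than redoing the factorization of Lemma \ref{Snexplicitlemma} for each $n$. The $\mathcal{A}$-symmetry maps $\Gamma_n$ to $\Gamma_{n+4}$. The $\mathcal{B}$-symmetry (Schwarz reflection) maps $\Gamma_1$ to itself and $\Gamma_2$ to $\Gamma_{12}$. Therefore checking the jumps on $\Gamma_1$ and $\Gamma_2$ suffices, and these two come from $T_{12}^{-1}T_1$ and $T_1^{-1}T_2$ as in the proof of Lemma \ref{Mjumplemma}.
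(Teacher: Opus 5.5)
Your main line is the paper's own argument. Analyticity and continuous boundary values come from Proposition \ref{Mnprop} together with Lemma \ref{QtildeQlemma}. The jump $M_n=M_{n-1}v_n$ on every ray comes from Lemma \ref{Mjumplemma}, which is stated for all of $\Gamma\setminus\mathcal{Q}$; the paper writes out only $n=1$ and calls the other rays similar, noting that adjacent $M_n$ differ by $e^{x\hat{\mathcal{L}}+t\hat{\mathcal{Z}}}\big(M_n(0,0,k)^{-1}M_{n+1}(0,0,k)\big)$. Your separate limiting argument at points of $\mathsf{Z}\setminus\{0\}$ is redundant, since Lemma \ref{QtildeQlemma} already gives Lemma \ref{Mjumplemma} with $\mathcal{Q}$ replaced by $\{0\}$.

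The symmetry shortcut you propose for the point that ``might need care'' is, however, wrong as stated. The maps $k\mapsto\omega k$ and $k\mapsto\bar k$ generate a dihedral group of order six, which has three orbits on the twelve rays, not two:
\begin{itemize}
\item $\{\Gamma_1,\Gamma_5,\Gamma_9\}$, at angles $0,2\pi/3,4\pi/3$;
\item $\{\Gamma_3,\Gamma_7,\Gamma_{11}\}$, at angles $\pi/3,\pi,5\pi/3$;
\item the six even-indexed rays.
\end{itemize}
The negative real axis $\Gamma_7$ is fixed by conjugation and is never reached from $\Gamma_1$; that would require $k\mapsto -k$. This matches the jump matrices: $v_1,v_5,v_9$ involve only $r_1$, while $v_3,v_7,v_{11}$ involve $r_2$, an independent spectral function with its own formula in \eqref{r1r2def}. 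So checking $\Gamma_1$ and $\Gamma_2$ leaves $\Gamma_3,\Gamma_7,\Gamma_{11}$ unverified. You must also check $M_7=M_6v_7$ on $\Gamma_7$, i.e.\ that $e^{x\hat{\mathcal{L}}+t\hat{\mathcal{Z}}}\big(T_6^{-1}T_7\big)=v_7$. This is why Lemma \ref{Snexplicitlemma} lists $T_n$ exactly for $n\in\{1,2,6,7,12\}$: these cover the three representative rays $\Gamma_1,\Gamma_2,\Gamma_7$. With that third computation added, or by simply citing Lemma \ref{Mjumplemma} for all $n$ as the paper does, your proof is complete.
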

\begin{proof}
The analyticity and the existence of continuous boundary values are a consequence of Proposition \ref{Mnprop} and Lemma \ref{QtildeQlemma}.
By Proposition \ref{Mnprop}, $M_n$ satisfies the Lax pair equations \eqref{Xlax}. These equations imply that the functions $M_n$ are related by
$$M_{n+1}(x,t,k) = M_n(x,t,k) e^{\hat{\mathcal{L}}x + \hat{\mathcal{Z}}t}\big(M_n(0,0,k)^{-1}M_{n+1}(0,0,k)\big),$$
for $(x,t) \in \R_{+} \times [0,T]$ and $k \in \bar{D}_{n}\cap \bar{D}_{n+1}$, $n=1, \dots,12$ (with $\bar{D}_{13}:=\bar{D}_{1}$ and $M_{13}:=M_{1}$). Equation (\ref{Mjumpcondition}) now follows from Lemma \ref{Mjumplemma}.
\end{proof}

\subsection{Proof of Theorem \ref{RHth}}\label{RHsec}
Suppose $\{u(x,t), v(x,t)\}$ is a Schwartz class solution of (\ref{boussinesqsystem}) on $\R_+ \times [0, T]$ for some $T > 0$, with initial data $u_0, v_0 \in \mathcal{S}(\R_{+})$, and boundary values $\tilde{u}_{0}, \tilde{u}_{1}, \tilde{u}_{2}, \tilde{v}_0 \in C^{\infty}([0,T])$ such that Assumptions \ref{solitonlessassumption} and \ref{originassumption} hold. Define eigenfunctions $\{M_n(x,t,k)\}_{n=1}^{12}$ as in (\ref{Mndef}). Define the sectionally analytic function $M(x,t,k)$ by setting $M(x,t,k) = M_n(x,t,k)$ for $k \in D_n$. 

By Lemma \ref{Matinftylemma} and Proposition \ref{prop:first two coeff at infty}, we have
$$\lim_{k\to \infty}k\big[(M(x,t,k))_{33} - 1\big] = -\frac{2}{3}  \int_{\infty}^{x} u(x^{\prime}, t) dx'.$$
Recalling that $u(\cdot,t),v(\cdot,t)$ have rapid decay as $x \to \infty$ and that $u_t = v_x$, the formulas (\ref{recoveruv}) for $u$ and $v$ follow. On the other hand, the solution of RH problem \ref{RH problem for M} is unique (the proof of this follows as in \cite[Appendix A]{CLgoodboussinesq}). Thus it only remains to verify that $M$ satisfies RH problem \ref{RH problem for M}.

Property $(c)$ of RH problem \ref{RH problem for M} related to the asymptotics of $M$ as $k \to \infty$ follows from Lemma \ref{Matinftylemma} and Proposition \ref{prop:first two coeff at infty}.
Property $(d)$ related to the asymptotics of $M$ as $k \to 0$ follows from Lemma \ref{Mat1lemma}, property $(e)$ follows from the symmetries (\ref{Msymm}) of $M$ and Lemma \ref{QtildeQlemma}, and properties $(a)$ and $(b)$ of RH problem \ref{RH problem for M} follow from Lemma \ref{Mxtjumplemma}.

\bigskip
\noindent
{\bf Acknowledgement.} CC is a Research Associate of the Fonds de la Recherche Scientifique - FNRS. CC also acknowledges support from the European Research Council (ERC), Grant Agreement No. 101115687. JL acknowledges support from the Swedish Research Council, Grant No. 2021-03877.

\bibliographystyle{plain}

\end{document}